\documentclass[12pt]{article}
\usepackage[utf8]{inputenc}
\usepackage{amssymb,amsfonts,amsmath,amsthm}
\usepackage{mathrsfs}
\usepackage{mathtools}
\usepackage{color}
\usepackage[colorlinks = true,
linkcolor = red,
urlcolor  = red,
citecolor = blue,
anchorcolor = red]{hyperref}

\numberwithin{equation}{section}

\newtheorem{theorem}{Theorem}[section]
\newtheorem{lemma}[theorem]{Lemma}

\newtheorem{remark}[theorem]{Remark}

\usepackage{soul}
\usepackage{xcolor}
\sethlcolor{pink}
\begin{document}
\title{\bf Stability and Blow-Up for a Suspension Bridge Plate Model with Fractional Damping and Memory}
	\author{
		\small{\bf Iqra Kanwal}
        \footnote{Corresponding author. Email:ikanwal1703@gmail.com} \\
		\small School of Mathematics and Statistics, Shanxi University, Taiyuan, China \\
		\small {\bf Jianghao Hao}  \footnote{ Email:hjhao@sxu.edu.cn }\\
		\small School of Mathematics and Statistics, Shanxi University, Taiyuan, China,\\
		\small{\bf Muhammad Fahim Aslam}
        \footnote {Email: fahim.sihaab@gmail.com}\\
		\small Department of Mathematics, University of Kotli Azad Jammu and Kashmir(UOKAJK), \\ \small Kotli, Pakistan  \\
        \small{\bf Zayd Hajjej} \footnote{Email: zhajjej@ksu.edu.sa}\\
        \small Department of Mathematics, College of Science, King Saud University,\\
        \small P.O. Box 2455, Riyadh 11451, Saudi Arabia. \\
        \small{\bf Mauricio Sep\'ulveda-Cort\'es} \footnote{Email: mauricio@ing-mat.udec.cl}\\
        \small Departamento de Ingenier\'ia Matem\'atica \\ \small \& Centro de Investigaci\'on en Ingenier\'ia Matem\'atica, Universidad de Concepci\'on,\\
        \small Concepci\'on, Chile. \\
        \small{\bf Rodrigo Vejar-Asem} \footnote{Email: rodrigo.vejar@userena.cl}\\
        \small Departamento de Matem\'aticas, Universidad de La Serena,\\
        \small La Serena, Chile.
        }
	\date{}
	\maketitle
	\begin{abstract}
We investigate a suspension bridge model described by a nonlinear plate equation
incorporating internal fractional damping and infinite memory effects. The system
also includes a nonlinear source term that may induce instability. Using semigroup
theory, we first establish the local well-posedness of solutions in an appropriate
energy space. We then derive conditions ensuring global existence and exponential
stability of solutions. In contrast, when the initial energy is negative, we prove
that solutions blow up in finite time, revealing a threshold phenomenon governing
the long-term dynamics of the system.

To complement the analytical results, we construct a numerical approximation based
on Summation-By-Parts finite differences with Simultaneous Approximation Terms
(SBP--SAT) for the spatial discretization and a Newmark scheme for time integration.
The scheme preserves the structural properties of the continuous energy framework.
Numerical experiments illustrate the stability and blow-up regimes predicted by
the theoretical analysis.
\end{abstract}
\noindent{{\bf Keywords:} Suspension bridge models; nonlinear dynamic response; 
fractional-order damping; finite-time blow-up; systems with infinite memory.}
\section{ Introduction}
Bridges have always been employed to reach areas that are inaccessible by rivers or narrow valleys, or thickly-settled cities. These buildings usually comprise of a roadway-deck which was held up by cables at each end and crossed over high towers. The arrangement permits lengthy spans, good distribution of loads and a fine architectural design.

Suspension bridges are vulnerable to a variety of dynamic disturbances notwithstanding these benefits. The building may vibrate or oscillate due to wind, traffic loads, and seismic pressures. When these stimuli interact with the bridge's inherent frequencies, they could cause severe oscillatory reactions and, in severe circumstances, jeopardize structural safety or serviceability. As a result, managing suspension bridges' dynamic response is crucial for both mathematical modeling and engineering design.

The sustained may be caused by wind effects, including turbulence and shedding of vortices 
oscillations. Another source of time-dependent forcing is the traffic loads that can be coupled to the vibration modes of the bridge deck. In addition,
seismic events can cause sudden and sharp excitements that actuate the system
far from equilibrium. Provided that these phenomena are not properly regulated, the
dynamics instability can cause threat to the safety and long-term durability
of the structure.

Engineers use different damping mechanisms to reduce the effects.
Damping will decrease the magnitude of vibrations and decrease the possibility of resonance eliminating a portion of the vibrational energy. Conventional machines are dependent on material
dissipation or frictional mechanisms. More recently, strategies founded on
proposals have been made of fractional calculus, such as fractional damping or
fractional-order delay, that are more flexible and more accurate descriptions of
energy dissipation.

Fractional damping can reproduce memory-type responses observed in real
structures and capture a broader range of frequency-dependent behaviors than
classical integer-order damping models. Fractional operators have been
successfully applied in many areas of science and engineering, including nuclear
physics, quantum mechanics, and fluid dynamics (see, for example,
\cite{MainBo,Pod,TorBa}). In structural mechanics, they offer a natural framework
for describing viscoelastic or nonlocal effects, since fractional derivatives
incorporate past states of the system into the present evolution. This property
makes them particularly suitable for modeling distributed internal forces and
history-dependent responses in bridge components (see also \cite{RosSh}).

In this study, we examine a suspension-bridge model with $d\ll\pi$ specified on the domain $\Omega=(0,\pi)\times(-d,d)$. The system includes a nonlinear source, an infinite-memory term, and frictional damping. Our goal is to find conditions that guarantee stability, examine the long-term behavior of solutions, and ascertain the conditions that could lead to finite-time blow-up.

We examine the following plate equation:
\begin{equation} \label{mprobl}
\hspace{-0.7cm}\left\{\begin{array}{lr}
u_{t t}(x, y, t)+\Delta^{2} u(x, y, t)+a_{1} \partial_{t}^{\alpha, \beta} u(x, y, t) & \\[1em]- \int_{0}^{+\infty} g(s) \Delta^2u (t - s) ds=u|u|^{p-2}, & \text { in } \Omega \times(0,+\infty), \\[1em]
u(0, y, t)=u_{x x}(0, y, t)=u(\pi, y, t)=u_{x x}(\pi, y, t)=0, & (y, t) \in(-d, d) \times(0,+\infty), \\[1em]
u_{y y}(x, \pm d, t)+\sigma u_{x x}(x, \pm d, t)=0, & (x, t) \in(0, \pi) \times(0,+\infty), \\[1em]
u_{y y y}(x, \pm d, t)+(2-\sigma) u_{x x y}(x, \pm d, t)=0, & (x, t) \in(0, \pi) \times(0,+\infty), \\[1em]
u(x, y, 0)=u_{0}(x, y), u_{t}(x, y, 0)=u_{1}(x, y), & \text { in } \Omega,\\[1em]
u(x, y, -t)=h(x,y, t),& \text { in } \Omega\times(0,+\infty),
\end{array}\right.
\end{equation}
where $a_1 >0$,  $p > 2$, and $g$ is a function that will be specified later.  For physical reasons, the Poisson ratio $\sigma$ generally lies in the range $(-1, \frac{1}{2})$. Since its value is about $0.3$ for metals and lies between $0.1$ and $0.2$ for concrete, it is reasonable to assume that $0 <\sigma <\frac{1}{2}$.  The modified Caputo's fractional derivative, represented by the symbol $\partial_{t}^{\alpha,\beta}$, is defined as follows (see~\cite{BC,CM}):
$$
 \partial_{t}^{\alpha,\beta} u(t) = \frac{1}{\Gamma(1-\alpha)} \int_{0}^{t} (t-s)^{-\alpha} e^{-\beta(t-s)} u_{s}(s) ds, \hspace{7mm}0 < \alpha < 1,\; \beta>0.
$$
The initial findings about suspension bridges come from McKenna and Walter \cite{Meck2} and McKenna et al. \cite{Meck1}, who demonstrated the existence of nonlinear oscillations and presented a model showing the dynamics of a suspension bridge.The asymptotic dynamics and global attractors for coupled suspension bridge equations were studied by Bochicchio et al. \cite{Vuk} and Ma and Zhong \cite{Ma}, respectively.  Ferrero and Gazzola recently presented a new suspension bridge type employing a plate \cite{Ferr1}. More information about suspension bridge models can be found at \cite{Gazzola}.  The model in \cite{Ferr1} had its bending and stretching energies analyzed in depth in \cite{Algwaiz}.  In their subsequent work, Berchio et al. \cite{Berchio} examined the structural instability of suspension bridge models that incorporated nonlinear plates. The finite time blow-up and uniform stability of suspension bridges have been the subject of numerous recent studies. In \cite{Wang}, Wang  considered the following problem
$$v_{tt}+\Delta^2 v +a_0 v_t+a v=v\vert v\vert^{p-2},$$
with the same boundary conditions as in \eqref{mprobl} and where $a=a(x,y,t)$ is a sign-changing and bounded measurable function. The author established necessary and sufficient conditions for the uniqueness and existence of global solutions as well as the finite time blow-up of these solutions. Next, by considering a nonlinear damping (of the form $\vert v_t\vert^{m-2}v_t$) instead of a linear one,  Liu et al. \cite{Liu}  extended the work of Wang \cite{Wang}. In \cite{Mess1}, the authors considered system \eqref{mprobl} in the case where $a_1=0$ and with a general source term of the form $h(v)$. Through the use of multiplier techniques, the authors proved an exponential decay rate of energy. It is also worth mentioning the works \cite{Haj1, Haj2, Haj3, Mess2}, which focus on suspension bridges and present other types of damping, including structural and viscoelastic damping.\\
Infinite memory or past history plays a crucial role in the mathematical modeling of various physical systems, particularly in the context of wave equations and viscoelastic materials. In the study of infinite memory wave equations, the system's future state depends not only on its current state but also on its entire past history, often represented through integral terms involving memory kernels. Such models are significant because they capture the hereditary effects and long-term dependencies inherent in many materials and processes. For example, recent research has established exponential stability results for nonlinear wave equations with infinite memory and frictional damping, showing that the infinite memory effect alone can guarantee the system's exponential stability under certain conditions. This highlights the strong influence of past states on the system's energy decay and overall behavior, which cannot be captured by models lacking memory terms. In the presence of infinite memory,many researchers have discussed stability and blow up results of the systems. For this we recommend \cite{FA1,DA,KR,FA2} for the readers for better understanding.\\
Several related studies deal with stability questions for models with fractional
damping. In particular, the works \cite{Akil1, Akil2, Benaissa2, Benaissa1, ZH, FA3}
examine wave equations and Timoshenko-type systems where fractional damping plays a
central role in controlling the evolution of solutions.

Infinite-memory effects are also important in the analysis of suspension-bridge
models, especially when the deck is described by plate-type equations. In such
settings, the response of the structure at any given moment depends not only on its
current state but also on its entire deformation history. This feature is typically
incorporated through integral terms involving memory kernels that represent
viscoelastic and hereditary contributions. Including these terms leads to models that
better capture the long-term dynamic behavior of bridge decks and the way energy is
dissipated within the structure.

From a mechanical point of view, the presence of memory increases the damping
capability of the system, helping to control oscillations produced by external forces
such as traffic loads or wind excitation. Mathematically, studies of nonlinear plate
equations with infinite-memory terms have established well-posedness and various
stability results, showing that memory effects can contribute significantly to
stabilizing the system. These observations are particularly relevant for the design
and assessment of suspension bridges, where long-term durability and resistance to
dynamic instabilities are essential.

The purpose of the present work is to investigate the qualitative behavior of
solutions to the suspension bridge model \eqref{mprobl}, which incorporates
both fractional damping and an infinite-memory term. The interaction between
these two mechanisms introduces additional mathematical difficulties and
leads to a rich dynamical behavior, and creates a competition between dissipative mechanisms and nonlinear
instability, which motivates the analysis carried out in this work.
The main contributions of this paper can be summarized as follows:

\begin{itemize}
\item We give the local well-posedness result for the system in an appropriate energy space, which follows from semi-group theory.

\item Under suitable conditions on the initial data, we prove the global
existence and exponential stability of solutions.

\item When the initial energy is negative, we show that the corresponding
solutions blow up in finite time, revealing a threshold phenomenon in the
long-term dynamics of the model.

\item In order to complement the theoretical analysis, we develop a numerical
approximation based on SBP--SAT finite differences for the spatial
discretization combined with a Newmark time integration scheme, which
preserves the structural properties of the continuous energy framework.
\end{itemize}

 This~paper is structured as follows. Section 2 provides the assumptions and tools necessary to establish the main results. In~Section 3, we prove the well-posedness of our problem. Section 4 is devoted to the study of exponential stability. In Section 5, we establish the finite-time blow-up of a specific solution. Section 6 is devoted to the presentation of numerical simulations. Section 7 presents our conclusions and future works.
\section{Preliminaries}
In this section, we introduce the functional setting and several auxiliary
results that will be used throughout the paper.

We denote by $\langle\cdot,\cdot\rangle$
and $\|\cdot\|$ the inner product and the norm in $L^2(\Omega)$,
respectively.

We define the space
\[
H^2_*(\Omega)=\left\{u\in H^2(\Omega): u=0 \ \text{on}\ \{0,\pi\}\times(-d,d)\right\},
\]
equipped with the scalar product
\[
(u,v)_{H^2_*(\Omega)}=\int_\Omega
\big[\Delta u\,\Delta v
+(1-\sigma)(2u_{xy}v_{xy}-u_{xx}v_{yy}-u_{yy}v_{xx})\big]\,dx\,dy.
\]
Note that $(H^2_*(\Omega),(\cdot,\cdot)_{H^2_*(\Omega)})$ is a Hilbert space and
that the induced norm $\|\cdot\|_{H^2_*(\Omega)}$ is equivalent to the usual
$H^2$-norm (see \cite{Ferr1}).

We then have the following Lemmas.\\[0pt]
\begin{lemma}\cite{Ferr1} If $0<\sigma<\frac{1}{2}$ and $f \in L^{2}(\Omega)$, then there is a unique $v \in H^2_{*}(\Omega)$ such that, for all $u \in H^2_{*}(\Omega)$, we have
\begin{equation} \label{eq2.1}
(v, u)_{H^2_{*}(\Omega)}=\int_{\Omega} f v.
\end{equation}
The function $u \in H^2_{*}(\Omega)$ satisfying \eqref{eq2.1} is known as the weak solution to the stationary problem
\begin{equation}
\left\{\begin{array}{l}
\Delta^{2} u=f \\
u(0, y)=u(\pi, y)=u_{x x}(0, y)=u_{x x}(\pi, y)=0 \\
u_{y y}(x, \pm d, t)+\sigma u_{x x}(x, \pm d, t)=u_{y y y}(x, \pm d)+(2-\sigma) u_{x x y}(x, \pm d)=0
\end{array}\right.
\end{equation}
\end{lemma}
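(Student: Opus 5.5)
This is a Lax--Milgram argument. I read the statement in its intended form: for $f\in L^2(\Omega)$ there is a unique $u\in H^2_*(\Omega)$ with $(u,v)_{H^2_*(\Omega)}=\int_\Omega f v$ for all $v\in H^2_*(\Omega)$. The typo swapping the roles of $u$ and $v$ is harmless.

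\emph{Coercivity.} The bilinear form is $a(u,v)=(u,v)_{H^2_*(\Omega)}$, which is exactly the scalar product of the Hilbert space $H^2_*(\Omega)$. So it is trivially bounded and coercive with constant one. The substantive input is the fact, quoted above from \cite{Ferr1}, that this form defines a norm equivalent to the $H^2$-norm when $0<\sigma<\frac12$. That is where the restriction on $\sigma$ enters. One writes the integrand as
\[
\sigma(\Delta u)^2+(1-\sigma)\bigl(u_{xx}^2+u_{yy}^2+2u_{xy}^2\bigr),
\]
which controls $|D^2u|^2$ since $0<\sigma<1$. One then uses the Dirichlet condition on $\{0,\pi\}\times(-d,d)$ and Poincaré-type inequalities in $x$ to recover the lower-order terms. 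I would simply invoke this equivalence rather than reprove it.

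\emph{Boundedness of the load.} The functional $\ell(v)=\int_\Omega f v$ is continuous on $H^2_*(\Omega)$, since
\[
|\ell(v)|\le \|f\|\,\|v\|\le C\|f\|\,\|v\|_{H^2}\le C'\|f\|\,\|v\|_{H^2_*(\Omega)}.
\]
The Riesz representation theorem (or Lax--Milgram) in $(H^2_*(\Omega),(\cdot,\cdot)_{H^2_*})$ then gives existence and uniqueness of $u$, with $\|u\|_{H^2_*}\le C'\|f\|$.

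\emph{Weak formulation.} To justify calling $u$ a weak solution of the stationary problem, I would check that classical solutions satisfy the variational identity. For smooth $u$, integrate by parts twice in $\int_\Omega \Delta^2u\, v$. This produces the bilinear form plus boundary integrals. On $x\in\{0,\pi\}$ these vanish because $v=0$ there and $u_{xx}=0$; the latter, with $u=0$ (so $u_{yy}=0$), kills the moment term. On $y=\pm d$ the boundary terms are
\[
\int \bigl(u_{yy}+\sigma u_{xx}\bigr)v_y
\quad\text{and}\quad
\int \bigl(u_{yyy}+(2-\sigma)u_{xxy}\bigr)v,
\]
after a tangential integration by parts of the $u_{xy}v_{xy}$ term in $x$. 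Both vanish by the free-edge conditions. Conversely, a sufficiently regular weak solution recovers these natural boundary conditions by choosing arbitrary test functions.

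The main obstacle is the bookkeeping of this integration by parts, in particular the twisting term $(1-\sigma)(2u_{xy}v_{xy}-u_{xx}v_{yy}-u_{yy}v_{xx})$ and the corner contributions. I would handle these by integrating the mixed term by parts in the tangential variable along $y=\pm d$ and using $v=0$ at $x=0,\pi$ to discard the corner terms.
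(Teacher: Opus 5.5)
Your proposal is correct and follows the same route as the source. The paper gives no proof of its own and quotes the result from \cite{Ferr1}, where it is obtained as you do: apply the Riesz (Lax--Milgram) theorem in the Hilbert space $\bigl(H^2_*(\Omega),(\cdot,\cdot)_{H^2_*(\Omega)}\bigr)$, whose norm is equivalent to the $H^2$-norm, to the bounded functional $v\mapsto\int_\Omega f v$. Your reading of the $u$/$v$ swap as a typo and your integration-by-parts check of the natural conditions on $x\in\{0,\pi\}$ and $y=\pm d$ are also right; note that, as your rewriting of the integrand shows, coercivity only requires $0<\sigma<1$, so the hypothesis $\sigma<\frac12$ is stronger than this step needs.
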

\begin{lemma} \cite{Wang} \label{lem2.2} Let $u \in H^2_{*}(\Omega)$ and $1 \leq r<+\infty$. Then, we have
\begin{equation}\label{eq2.3}
\|u\|_{r}^{r} \leq C_{e}\|u\|_{H^2_{*}(\Omega)}^{r},
\end{equation}
for some positive constant $C_{e}=C_{e}(\Omega, r)$, where $\|\cdot\|_{r}$ is the usual $L^{r}(\Omega)$-norm.\\[0pt]
\end{lemma}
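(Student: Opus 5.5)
The bound $\|u\|_r^r \le C_e\|u\|_{H^2_*(\Omega)}^r$ is a Sobolev embedding for $H^2$ functions in two dimensions, so I would prove it by chaining three facts.

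\textbf{Step 1 (embedding into $C(\overline{\Omega})$).} Since $\Omega=(0,\pi)\times(-d,d)$ is a bounded Lipschitz domain in $\mathbb{R}^2$, the Sobolev embedding theorem gives $H^2(\Omega)\hookrightarrow C(\overline{\Omega})$, because $2>n/2=1$. In fact the weaker statement $H^2(\Omega)\hookrightarrow H^1(\Omega)\hookrightarrow L^r(\Omega)$ for every finite $r$ (the two-dimensional borderline case) would also suffice. Either way we get $\|u\|_\infty\le C_1\|u\|_{H^2(\Omega)}$, or directly $\|u\|_r\le C_r\|u\|_{H^2(\Omega)}$.

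\textbf{Step 2 (from $L^\infty$ to $L^r$).} If I use the continuous embedding, boundedness of $\Omega$ gives $\|u\|_r\le |\Omega|^{1/r}\|u\|_\infty$.

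\textbf{Step 3 (norm equivalence).} The key input is the equivalence, stated just before Lemma 2.1 and taken from \cite{Ferr1}, between $\|\cdot\|_{H^2_*(\Omega)}$ and the usual $H^2$-norm on $H^2_*(\Omega)$. This yields $\|u\|_{H^2(\Omega)}\le C_2\|u\|_{H^2_*(\Omega)}$.

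Chaining the three bounds gives $\|u\|_r\le C\|u\|_{H^2_*(\Omega)}$. Raising both sides to the power $r$ gives the claim with $C_e=C^r$, which depends only on $\Omega$ and $r$.

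The only point that is not routine is the norm equivalence, in particular the lower bound. This is where the hypothesis $0<\sigma<\tfrac12$ and the condition $u=0$ on $\{0,\pi\}\times(-d,d)$ enter. If I had to prove it rather than cite it, I would argue as follows.

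First, expanding the scalar product gives
\[
\|u\|_{H^2_*(\Omega)}^2=\sigma\|\Delta u\|^2+(1-\sigma)\big(\|u_{xx}\|^2+\|u_{yy}\|^2+2\|u_{xy}\|^2\big),
\]
which is $\ge \sigma\,|D^2u|_{L^2}^2$ because $0<\sigma<1$.

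Second, the vanishing of $u$ on the sides $x=0,\pi$ gives Poincaré-type bounds in $x$. These control $\|u\|$ and $\|\nabla u\|$ by $\|D^2u\|$: apply the one-dimensional Poincaré inequality in $x$ to $u$, then use the inequality $\|u_x\|^2\le \|u\|\,\|u_{xx}\|$, obtained by integrating by parts in $x$. To control $u_y$, use the Ehrling-type compactness argument. This closes the lower bound.
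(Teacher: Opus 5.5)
Your argument is correct and is essentially the paper's route: the paper only cites \cite{Wang} for this lemma, but the proof it does write out in the appendix (for the $d$-dependent version) is the same chain, namely a Sobolev embedding on a fixed bounded domain followed by the equivalence of $\|\cdot\|_{H^2_*(\Omega)}$ with the usual $H^2$-norm from \cite{Ferr1}. One small slip in your optional sketch: the expansion gives $\|u\|_{H^2_*(\Omega)}^2\ge(1-\sigma)\|D^2u\|^2$, and replacing $1-\sigma$ by $\sigma$ needs $\sigma\le\frac12$ (true here), not merely $\sigma<1$; also, you can control $u_y$ without Ehrling, since $u_y=0$ on $x=0,\pi$ and the one-dimensional Poincaré inequality in $x$ gives $\|u_y\|\le\|u_{xy}\|$.
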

\begin{lemma}\cite{BM}\label{le2} Let $\xi$ be the function
$$
\xi(\vartheta)=|\vartheta|^{\frac{2 \alpha-1}{2}}, \vartheta \in \mathbb{R}, 0<\alpha<1 .
$$
Then, the relationship between the 'input' $U$ and the 'output' $O$ of the system
\begin{equation}
\left\{\begin{array}{l}
\phi_t(x, y, \vartheta, t)+\left(\vartheta^{2}+\beta\right) \phi(x, y, \vartheta, t)-U(x, y, t) \xi(\vartheta)=0, \vartheta \in \mathbb{R}, t>0, \beta>0,\\
\phi(x, y, \vartheta, 0)=0, \\
O(t)=\pi^{-1} \sin (\alpha \pi) \displaystyle\int_{-\infty}^{+\infty} \phi(x, y, \vartheta, t) \xi(\vartheta) d \vartheta
\end{array}\right.
\end{equation}
is given by
$$
O=I^{1-\alpha, \beta} U,
$$
where
$$
I^{\alpha, \beta} u(t)=\frac{1}{\Gamma(\alpha)} \int_{0}^{t}(t-s)^{\alpha-1} e^{-\beta(t-s)} u(s) d s .
$$
\end{lemma}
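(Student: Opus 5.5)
The plan is to solve the linear ODE in $\vartheta$ explicitly, insert the solution into the definition of $O$, and use Fubini to reduce everything to a scalar integral that can be computed in closed form. For fixed $(x,y)$ and $\vartheta$, the first equation is a linear first-order ODE in $t$ with zero initial data. Variation of constants gives
\[
\phi(x,y,\vartheta,t)=\xi(\vartheta)\int_0^t e^{-(\vartheta^2+\beta)(t-s)}U(x,y,s)\,ds .
\]
Substituting into the definition of $O$ and exchanging the order of integration yields
\[
O(t)=\frac{\sin(\alpha\pi)}{\pi}\int_0^t\Big(\int_{-\infty}^{+\infty}|\vartheta|^{2\alpha-1}e^{-\vartheta^2(t-s)}\,d\vartheta\Big)e^{-\beta(t-s)}U(s)\,ds .
\]
The exchange is justified by Tonelli and Fubini. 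The integrand is bounded by $|\vartheta|^{2\alpha-1}e^{-\vartheta^2(t-s)}|U(s)|$. This bound is integrable near $\vartheta=0$ because $2\alpha-1>-1$, and at infinity thanks to the Gaussian factor. The remaining $s$-integrability will follow from the computation below, assuming for instance that $U(x,y,\cdot)\in L^1_{loc}$.

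The key step is evaluating the inner integral. Set $\tau=t-s>0$. By evenness the integral equals $2\int_0^\infty \vartheta^{2\alpha-1}e^{-\vartheta^2\tau}\,d\vartheta$. The substitution $r=\vartheta^2\tau$, so that $d\vartheta=\tfrac12 \tau^{-1/2}r^{-1/2}dr$, turns this into
\[
\tau^{-\alpha}\int_0^\infty r^{\alpha-1}e^{-r}\,dr=\Gamma(\alpha)\,\tau^{-\alpha}.
\]
Hence
\[
O(t)=\frac{\sin(\alpha\pi)\Gamma(\alpha)}{\pi}\int_0^t (t-s)^{-\alpha}e^{-\beta(t-s)}U(s)\,ds .
\]

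Finally, the reflection formula $\Gamma(\alpha)\Gamma(1-\alpha)=\pi/\sin(\alpha\pi)$ identifies the constant as $1/\Gamma(1-\alpha)$. The kernel $(t-s)^{-\alpha}=(t-s)^{(1-\alpha)-1}$ matches the definition of $I^{1-\alpha,\beta}$, so $O=I^{1-\alpha,\beta}U$.

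The main obstacle is only the rigorous justification of Fubini. Once the inner integral is shown to equal $\Gamma(\alpha)\tau^{-\alpha}$, it reduces to local integrability of $(t-s)^{-\alpha}|U(s)|$, which holds since $\alpha<1$. Everything else is direct computation.
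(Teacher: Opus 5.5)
Your proposal is correct and is the standard argument from Mbodje's paper, which this paper only cites without proof: variation of constants, a Fubini exchange, the substitution $r=\vartheta^2(t-s)$ producing $\Gamma(\alpha)(t-s)^{-\alpha}$, and the reflection formula $\Gamma(\alpha)\Gamma(1-\alpha)=\pi/\sin(\alpha\pi)$. One small caveat: if $U(x,y,\cdot)$ is merely in $L^1_{loc}$, then $(t-s)^{-\alpha}|U(s)|$ is integrable only for almost every $t$ (it is a convolution of two $L^1_{loc}$ functions), so to get the identity for every $t$ you should assume $U$ is locally bounded in time, e.g.\ continuous.
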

We also need the next lemma.\\[0pt]
\begin{lemma}\cite{Pod} If $\left.\lambda \in D_{\beta}=\mathbb{C} \backslash\right]-\infty,-\beta[$, then
$$
\int_{-\infty}^{+\infty} \frac{\xi^{2}(\vartheta)}{\lambda+\beta+\vartheta^{2}} d \vartheta=\frac{\pi}{\sin (\alpha \pi)}(\lambda+\beta)^{\alpha-1} .
$$
\end{lemma}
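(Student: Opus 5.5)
The identity is obtained by a substitution that reduces the integral to a Beta integral. By definition, $\xi^2(\vartheta)=|\vartheta|^{2\alpha-1}$, which is an even function. Set $\mu=\lambda+\beta$. Since $\lambda\in D_\beta$, we have $\mu\in\mathbb{C}\setminus\,]-\infty,0]$, and the point $\lambda=-\beta$ (i.e.\ $\mu=0$) is excluded, as the formula itself requires. The plan is to prove the formula first for real $\mu>0$ and then extend it to the slit plane by analytic continuation.

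\emph{Step 1: the case $\mu>0$.} Because the integrand is even,
\[
\int_{-\infty}^{+\infty}\frac{|\vartheta|^{2\alpha-1}}{\mu+\vartheta^2}\,d\vartheta
=2\int_0^{\infty}\frac{\vartheta^{2\alpha-1}}{\mu+\vartheta^2}\,d\vartheta .
\]
Substitute $s=\vartheta^2/\mu$, so that $\vartheta=\mu^{1/2}s^{1/2}$ and $d\vartheta=\tfrac12\mu^{1/2}s^{-1/2}\,ds$. The right-hand side becomes
\[
\mu^{\alpha-1}\int_0^\infty\frac{s^{\alpha-1}}{1+s}\,ds .
\]
Convergence holds at $0$ because $\alpha>0$ and at $\infty$ because $\alpha<1$. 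The remaining integral is $B(\alpha,1-\alpha)=\Gamma(\alpha)\Gamma(1-\alpha)=\pi/\sin(\alpha\pi)$ by the reflection formula. This gives the claim for real $\mu>0$.

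\emph{Step 2: extension to complex $\mu$.} Both sides are holomorphic in $\mu$ on $\mathbb{C}\setminus\,]-\infty,0]$. The right-hand side $\mu^{\alpha-1}$ is taken with the principal branch. For the left-hand side, the integrand is holomorphic in $\mu$ for each fixed $\vartheta$. On a compact subset $K$ of the slit plane, $|\mu+\vartheta^2|$ is bounded below by $c_K(1+\vartheta^2)$; this is the main point to verify. It holds because $\mu+t$ with $t\ge0$ stays a positive distance from $0$ uniformly on $K$, and for large $t$ it behaves like $t$. This bound gives an integrable majorant $C|\vartheta|^{2\alpha-1}(1+\vartheta^2)^{-1}$. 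Holomorphy of the integral then follows from Morera's theorem combined with Fubini, or equivalently from differentiation under the integral sign.

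The two holomorphic functions agree on $]0,\infty[$, and the slit plane is connected. By the identity theorem they agree everywhere on it, that is, for all $\lambda\in D_\beta$ with $\lambda\neq-\beta$. The only delicate step is this uniform lower bound; the rest is routine.
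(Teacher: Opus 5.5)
Your proof is correct and complete. The paper does not prove this lemma; it only quotes the identity from \cite{Pod}, so there is no competing argument to compare against. What you give is a standard self-contained derivation that stands in for the citation. It uses the evenness of $\xi^2(\vartheta)=|\vartheta|^{2\alpha-1}$ and the substitution $s=\vartheta^2/\mu$, which reduces the integral to $\mu^{\alpha-1}B(\alpha,1-\alpha)=\mu^{\alpha-1}\pi/\sin(\alpha\pi)$ for $\mu>0$. It then applies the identity theorem on the slit plane. There, the bound $|\mu+t|\ge c_K(1+t)$ for $t\ge 0$ and $\mu$ in a compact $K$ supplies the integrable majorant. The bound holds because $K$ has positive distance from $]-\infty,0]$ and $|\mu+t|\ge t/2$ once $t\ge 2\sup_K|\mu|$.

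Besides removing the dependence on the reference, your version makes explicit two points the statement leaves implicit. First, $(\lambda+\beta)^{\alpha-1}$ must be the principal branch, since that is the holomorphic function on $\mathbb{C}\setminus\,]-\infty,0]$ that agrees with the real power on $]0,\infty[$. Second, the printed domain $D_\beta=\mathbb{C}\setminus\,]-\infty,-\beta[$ uses an open interval, so $\lambda=-\beta$ formally belongs to it. Strictly, your sentence deducing $\mu\in\mathbb{C}\setminus\,]-\infty,0]$ from $\lambda\in D_\beta$ should say $]-\infty,0[$. At $\mu=0$ the integrand is $|\vartheta|^{2\alpha-3}$, which is not integrable near $0$, and the right-hand side is infinite. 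So your exclusion of this point is necessary, and the lemma actually holds on $\mathbb{C}\setminus\,]-\infty,-\beta]$.
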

We impose the following assumptions on the memory kernel $g$:\\
(H1) $g : [0, \infty) \to (0, \infty)$ is a $L^1([0, \infty))\cap C^1([0, \infty))$ function  such that

$$g(0) > 0, \quad  \int_{0}^{\infty} g(s)\,ds = 1 - \lambda > 0.$$
(H2) There exist positive constants $c_0$ and $c_1$ such that
$$-c_0 g(t)\leq g'(t) \leq -c_1 g(t),\;\; \forall\;t\geq 0.$$
Following  \cite{RV2, RV1}, we introduce the relative history variable:
\begin{equation}
    \mu (x,y,t,s) = u(x,y,t)-u(x,y,t-s),\;(x,y)\in\Omega,\; t, s>0.
    \label{eq12}
\end{equation}
The variable $\mu$ represents the relative history of $u$ that satisfies the following equation:
\begin{equation*}\label{2.3m}
    \mu_t (x,y,t,s) - u_t (x,y,t) + \mu_s (x,y,s) = 0,\; (x,y)\in \Omega, \hspace{2mm} t,s > 0,\;\text{and}\;\mu(x,y,t,0)=0.\;
\end{equation*}
{\color{black}{Introducing the following boundary conditions:
\begin{eqnarray}\label{bc}
\left\{
\begin{array}{lcr}
u_{xx}(0,y)=u_{xx}(\pi,y)=0, \\

u_{yy}(x,\pm d)+\sigma u_{xx}(x,\pm d)=0, \\

u_{yyy}(x,\pm d)+(2-\sigma)u_{xxy}(x,\pm d)=0.
\end{array}
\right.
\end{eqnarray}
}}
By using Lemma \eqref{le2} and equation \eqref{eq12}, the system \eqref{mprobl} can be expressed in the following manner:
\vspace{-1 mm}
\begin{equation} \label{eq2.5}
\hspace{-0.4cm}\left\{\begin{array}{lr}
u_{t t}(x, y, t)+\lambda\Delta^{2} u(x, y, t)+ \displaystyle\int_{0}^{+\infty} g(s) \Delta^2 \mu (x,y,t,s) ds \\[1em]
+\kappa \displaystyle\int_{-\infty}^{+\infty} \phi(x, y, \vartheta, t) \xi(\vartheta) d \vartheta=u|u|^{p-2},  & (x, y) \in \Omega, t>0, \\[1em]
 \phi_{t}(x, y, \vartheta, t)+\left(\vartheta^{2}+\beta\right) \phi(x, y, \vartheta, t)-u_t (x,y,t) \xi(\vartheta)=0, & (x, y) \in \Omega, \vartheta \in \mathbb{R}, t>0, \\[1em]
\mu_t (x,y,t,s) + \mu_s(x,y,t,s)=u_t(x,y,t),& (x, y) \in \Omega,\; s, t>0, \\[1em]
u(0, y, t)=u_{x x}(0, y, t)=u(\pi, y, t)=u_{x x}(\pi, y, t)=0, & (x, t) \in(0, \pi) \times(0,+\infty), \\[1em]
u_{y y}(x, \pm d, t)+\sigma u_{x x}(x, \pm d, t)=0, & (x, t) \in(0, \pi) \times(0,+\infty), \\[1em]
u_{y y y}(x, \pm d, t)+(2-\sigma) u_{x x y}(x, \pm d, t)=0, & (x, y) \in \Omega, t>0, \\[1em]
u(x, y, 0)=u_{0}(x, y), u_{t}(x, y, 0)=u_{1}(x, y), & (x, y) \in \Omega, \\[1em]
\phi(x, y, \vartheta, 0)=0, & (x, y) \in \Omega, \vartheta \in \mathbb{R}, \\[1em]
\mu(x,y,0) = 0, \quad \mu_0 (x,y,0,s) = u_0 (x) - u_0 (x,-s),   & (x, y) \in \Omega, s>0,
\end{array}\right.
\end{equation}
where $\kappa=\frac{a_{1} \sin (\alpha \pi)}{\pi}$.
\section{Well-Posedness}
In this section, we will prove the well-posedness of system \eqref{eq2.5}.
Let $\mathcal{H}$ be the energy space which is given by
$$
\mathcal{H}=H^2_{*}(\Omega) \times L^{2}(\Omega) \times L^{2}(\Omega \times \mathbb{R}) \times L_g^2 (\mathbb{R}_+, H^2_{*} (\Omega)),
$$
where
$$
L_g^2 (\mathbb{R}_+, H^2_{*} (\Omega)) = \Biggl\{\omega : \mathbb{R}_+ \to H^2_{*} (\Omega), \int_{0}^{+\infty} g(s) ||\omega(s)||^2_{H^2_{*}} ds < \infty    \Biggl\};
$$
the space $L_g^2 (\mathbb{R}_+, H^2_{*})$ is endowed with the inner product:
$$
\bigl \langle w_1,w_2 \bigr \rangle_{L_g^2 (\mathbb{R}_+, H^2_{*} (\Omega)} =  \int_{0}^{+\infty} g(s) ( w_1 (s),  w_2 (s))_{H^2_{*}} dx dy ds.
$$
Let $v=u_t$ and $U = (u,v,\phi,\mu)^T.$ So, problem \eqref{eq2.5} can be written as:\\
 \vspace{4mm}
 \hspace{20mm}
 \begin{equation}
 \quad \quad \quad  \left\{
\begin{array}{lr}
U_t(t) + AU(t) = J(U(t))= (0, |u|^{p-2} u, 0, 0)^{T} \\
U(0) = U_0,
\label{A1}
\end{array}
\right.
\end{equation}
\\where the operator ${A}$ is defined by
\vspace{-.5 em}
\begin{equation}\label{eq3.2}
 AU = \left( \begin{array}{c}
-v \\
\vspace{2mm}
-\lambda \Delta^2 u -\int_{0}^{+\infty} g(s) \Delta^2 \mu (x,y,s) d s + b \int_{-\infty}^{+\infty} \phi(x,y,\vartheta) \xi(\vartheta) d\vartheta \\
\vspace{2mm}
(\xi^2 + \beta) \phi - u(x,y) \xi(\vartheta)\\
\vspace{2mm}
\mu_s (s)-u \end{array} \right),
\end{equation}
  with domain
\begin{eqnarray*}
D({A})&=&\Big\{(u, v, \phi, \mu)\in\mathcal{H}: u\in H^4(\Omega)\; \text{satisfying } \eqref{bc},\; v\in H^2_{*}(\Omega),\; \xi \phi\in L^2(\Omega\times\mathbb{R}),\\
 &&\hspace{2.5cm}-({\xi^2} + {\beta})\phi - u(x,y) \xi(\vartheta)\in L^2(\Omega\times\mathbb{R}),\; \mu_{s}\in L_g^2 (\mathbb{R}_+, H^2_{*}), \; \mu(., 0)=0  \Big\}.
\end{eqnarray*}

For all $U=(u, v, \phi, \mu)^{T}$ and $\tilde{U}=(\tilde{u}, \tilde{v}, \tilde{\phi}, \tilde{\mu})^{T} \in \mathcal{H}$, we define the inner product:
$$(U, \tilde{U})_{\mathcal{H}}=\lambda(u, \tilde{u})_{H^2_{*} (\Omega)}+\displaystyle\int_\Omega v \tilde{v} dx dy+\kappa \displaystyle\int_{\Omega} \int_{-\infty}^{+\infty} \phi(x, y, \vartheta) \tilde{\phi}(x, y, \vartheta) d\vartheta dx dy+ \bigl \langle \mu, \tilde{\mu} \bigr \rangle_{L_g^2 (\mathbb{R}_+, H^2_{*} (\Omega) )}.$$
By arguments similar to those in \cite{aslam25, ZH}, one can show that the operator $A$ is maximal monotone, that is $(A U, U)_{\mathcal{H}}\geq 0$  and the operator $Id + A $ is onto. This leads to the following result. 
\begin{theorem}
Suppose that (H1)-(H2) hold. Then, there exists $T>0$ such that if $U_0\in \mathcal{H}$, then system \eqref{eq2.5} has a unique weak solution $U\in C([0, T); \mathcal{H})$. In addition, if $U_0\in D(A)$, then problem \eqref{eq2.5} has a unique strong solution
$U\in C([0, T); D(A))\cap C^1([0, T); \mathcal{H})$.
\end{theorem}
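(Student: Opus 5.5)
We treat \eqref{A1} as a locally Lipschitz perturbation of the linear evolution generated by $-A$. The plan is to invoke the Lumer--Phillips theorem for the linear part and then the classical theory of semilinear evolution equations (Pazy) for the nonlinear source.

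\textbf{Step 1: the linear part.} First we check that $A$ is monotone on $\mathcal{H}$. For $U\in D(A)$ we compute $(AU,U)_{\mathcal{H}}$. The terms $-\lambda(v,u)_{H^2_*}+\lambda(\Delta^2u,v)$ cancel by the Green formula associated with the boundary conditions \eqref{bc}, which is the defining property of the weak formulation in the first lemma of Section 2. The same cancellation occurs between the memory terms $\int g(\Delta^2\mu,v)$ and $-\langle v,\mu\rangle_{L^2_g}$. The coupling terms $\kappa\int\!\!\int \phi\xi v$ and $-\kappa\int\!\!\int v\xi\phi$ also cancel. What remains is
\[
(AU,U)_{\mathcal{H}}=\kappa\int_\Omega\int_{\mathbb{R}}(\vartheta^2+\beta)|\phi|^2\,d\vartheta\,dx\,dy-\frac12\int_0^\infty g'(s)\|\mu(s)\|^2_{H^2_*}\,ds\ \ge 0 .
\]
Here we integrate by parts in $s$, using $\mu(0)=0$, the decay of $g$, and (H2), which gives $g'\le 0$.

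\textbf{Step 2: maximality.} Next we solve $U+AU=F$ for an arbitrary $F=(f_1,f_2,f_3,f_4)\in\mathcal{H}$.
\begin{itemize}
\item From the first component, $v=u-f_1$.
\item From the third, $\phi=(f_3+v\xi)/(\vartheta^2+\beta+1)$.
\item From the fourth, the ODE in $s$ gives $\mu(s)=(1-e^{-s})v+\int_0^s e^{\tau-s}f_4(\tau)\,d\tau$.
\end{itemize}
Substituting these into the second equation yields a single elliptic problem for $u$:
\[
\eta u+\Big(\lambda+\int_0^\infty g(s)(1-e^{-s})ds\Big)\Delta^2u=\tilde f ,
\]
where $\eta=1+\kappa\int_{\mathbb{R}}\xi^2/(\vartheta^2+\beta+1)\,d\vartheta>0$. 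This integral is finite by the lemma from \cite{Pod} with $\lambda=1$, and $\tilde f$ belongs to $(H^2_*)'$. We solve this problem by Lax--Milgram in $H^2_*(\Omega)$; coercivity follows from the equivalence of $\|\cdot\|_{H^2_*}$ with the $H^2$ norm. Elliptic regularity for the problem in the first lemma of Section 2 then gives $u\in H^4$ satisfying \eqref{bc}, so $U\in D(A)$.

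The main obstacle is verifying that the reconstructed $\phi$ and $\mu$ lie in the right weighted spaces.
\begin{itemize}
\item For $\phi$, we need $\xi\phi\in L^2$ and $\mu\in L^2_g$ with $\mu_s\in L^2_g$. These follow from $|\xi|^2/(\vartheta^2+\beta+1)\in L^1$ when $0<\alpha<1$.
\item For $\mu$, Young/Hardy-type estimates give $\int g\|\int_0^se^{\tau-s}f_4\|^2\le C\|f_4\|^2_{L^2_g}$, using $g(s)\le g(\tau)e^{-c_1(s-\tau)}$ from (H2).
\end{itemize}
By Lumer--Phillips, $-A$ then generates a $C_0$ contraction semigroup on $\mathcal{H}$.

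\textbf{Step 3: the nonlinearity.} We show that $J$ is locally Lipschitz on $\mathcal{H}$. For $\|U\|_{\mathcal{H}},\|\tilde U\|_{\mathcal{H}}\le R$, the mean value inequality and H\"older give
\[
\||u|^{p-2}u-|\tilde u|^{p-2}\tilde u\|\le (p-1)\big\|(|u|+|\tilde u|)^{p-2}|u-\tilde u|\big\|\le C\big(\|u\|_{2(p-1)}^{p-2}+\|\tilde u\|_{2(p-1)}^{p-2}\big)\|u-\tilde u\|_{2(p-1)} .
\]
Lemma~\ref{lem2.2} with $r=2(p-1)$ then bounds the right-hand side by $C(R)\|u-\tilde u\|_{H^2_*}\le C(R)\|U-\tilde U\|_{\mathcal{H}}$. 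This works for any $p>2$ because $H^2\hookrightarrow L^r$ for all $r$ in two dimensions.

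Pazy's theorem (Thm.\ 6.1.4) then yields a unique mild (weak) solution $U\in C([0,T);\mathcal{H})$ on a maximal interval, with $T$ depending on $\|U_0\|_{\mathcal{H}}$. For $U_0\in D(A)$ we get a strong solution in $C([0,T);D(A))\cap C^1([0,T);\mathcal{H})$ by Pazy Thm.\ 6.1.5. That theorem requires $J$ to be Lipschitz in a way that preserves regularity; we check this by differentiating $J(U(t))$, whose derivative is $(p-1)|u|^{p-2}u_t\in L^2$, again controlled by the same embedding.
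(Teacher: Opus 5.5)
Your route is the one the paper intends: the paper only asserts, by reference to \cite{aslam25, ZH}, that $A$ is maximal monotone, and then appeals to semigroup theory for a locally Lipschitz perturbation. Your Steps 1 and 3 are sound. The genuine gap is the last claim of Step 2, that elliptic regularity gives $u\in H^4$. As you note yourself, $\tilde f$ lies only in $(H^2_*(\Omega))'$. It contains $g_1\Delta^2 f_1$, where $g_1=\int_0^\infty g(s)(1-e^{-s})\,ds>0$ and $f_1$ is only in $H^2_*(\Omega)$. It also contains $\int_0^\infty g(s)\Delta^2\Phi(s)\,ds$, where $\Phi(s)=\int_0^s e^{\tau-s}f_4(\tau)\,d\tau$ is only in $H^2_*(\Omega)$. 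So the $L^2$-data regularity behind the first lemma of Section 2 does not apply to $u$.

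What the second equation actually yields is a statement about the combination $w:=\lambda u+\int_0^\infty g(s)\mu(s)\,ds$. It satisfies $\Delta^2 w=f_2-v-\kappa\int_{\mathbb{R}}\phi\,\xi\,d\vartheta\in L^2(\Omega)$, so $w\in H^4$ with \eqref{bc}. By contrast, $u=\big(w+g_1 f_1-\int_0^\infty g(s)\Phi(s)\,ds\big)/(\lambda+g_1)$ is in general only in $H^2_*$. Take $F=(f_1,0,0,0)$ with $f_1\in H^2_*\setminus H^4$: the unique Lax--Milgram solution then has $u\notin H^4$. So $I+A$ is not onto the domain as written, which demands $u\in H^4$. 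Conversely, on that domain $\int_0^\infty g(s)\Delta^2\mu(s)\,ds$ need not lie in $L^2$, so $A$ does not even map it into $\mathcal{H}$. This defect is already in the paper's $D(A)$, which you followed.

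The standard repair for past-history problems is to replace ``$u\in H^4(\Omega)$ satisfying \eqref{bc}'' by ``$\lambda u+\int_0^\infty g(s)\mu(s)\,ds\in H^4(\Omega)$ satisfying \eqref{bc}''. In Step 1 you then apply the Green formula to this combination as a whole, rather than to $u$ and to each $\mu(s)$ separately. With that change your Lax--Milgram construction lands exactly in the corrected domain. The strong-solution conclusion must then be read with this $D(A)$.

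Two smaller inaccuracies. First, $\xi\phi\in L^2(\Omega\times\mathbb{R})$ does not follow from $\xi^2/(\vartheta^2+\beta+1)\in L^1$ for the piece $\xi f_3/(\vartheta^2+\beta+1)$ when $\alpha<\frac12$, because $\xi$ is unbounded at $\vartheta=0$. The natural domain condition is $|\vartheta|\phi\in L^2(\Omega\times\mathbb{R})$. Your $\phi$ satisfies it, and it is what guarantees $\int_{\mathbb{R}}\phi\,\xi\,d\vartheta\in L^2(\Omega)$. Second, Pazy's Theorem 6.1.5 requires $J$ to be continuously Fr\'echet differentiable on $\mathcal{H}$, not merely that $t\mapsto J(U(t))$ be differentiable along the solution. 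This does hold, since $H^2(\Omega)\hookrightarrow L^\infty(\Omega)$ in two dimensions and $s\mapsto|s|^{p-2}s$ is $C^1$ for $p>2$.
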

Now, we introduce the functional energy associated with the problem \eqref{eq2.5}  by
\begin{equation}\label{eq2.6}
\begin{split}
E(t)
&= \frac{1}{2} ||u_t||_2^2 + \frac{\kappa}{2} \int_{\Omega} \int_{-\infty}^{+\infty} | \phi(x,y,\vartheta,t)|^2 d\vartheta dxdy + \frac{\lambda}{2} ||u||^2_{H^2_{*}}
\\
&\quad -\frac{1}{p} || u(t)||_p^p +\frac{1}{2} \int_{0}^{+\infty} g(s) ||\mu(s)||^2_{H^2_{*}(\Omega)} ds,
\end{split}
\end{equation}
which  satisfies 
\begin{equation}\label{eq2.7}
E^{\prime}(t)
= \frac{1}{2} \int_{0}^{+\infty} g'(s) ||\mu(s)||^2_{H^2_{*}(\Omega))} ds
 -\kappa \int_{\Omega} \int_{-\infty}^{+\infty} (\vartheta^2 + \beta) |\phi(x,y,\vartheta,t)|^2 d\vartheta dx dy \leq 0.
\end{equation}

Next, we will show the global existence of the solution of problem \eqref{eq2.5}. To do this,  we introduce the following functionals:
\begin{equation}
\begin{aligned}
I(t)= & \lambda\|u(t)\|_{H^2_{*} (\Omega)}^{2}-\|u(t)\|_{p}^{p}+\kappa \int_{\Omega} \int_{-\infty}^{+\infty}|\phi(x,y,\vartheta, t)|^{2} d\vartheta dx dy\\
& +\int_{0}^{+\infty} g(s)\left\|\mu(s)\right\|^{2}_{H^2_{*} (\Omega)} ds,
\end{aligned}
\end{equation}
\begin{equation}
\begin{aligned}
J(t)= & \frac{\lambda}{2}\|u(t)\|_{H^2_{*} (\Omega)}^{2}-\frac{1}{p}\|u(t)\|_{p}^{p}+\frac{\kappa}{2} \int_{\Omega} \int_{-\infty}^{+\infty}|\phi(x,y,\vartheta, t)|^{2} d\vartheta dx dy \\
& +\frac{1}{2} \int_{0}^{+\infty} g(s)\left\|\mu(s)\right\|^{2}_{H^2_{*} (\Omega)} ds
\end{aligned}
\end{equation}

\begin{lemma}\label{lem4.1}
Suppose that (H1)-(H2) hold. Then for any $U_{0} \in \mathcal{H}$ satisfying
\begin{equation}\label{eq4.3}
C_{e}\left(\frac{2 p}{p-2} E(0)\right)^{\frac{p-2}{2}}<\lambda^{\frac{p}{2}},\;\; I(0)>0,
\end{equation}
we have  $I(t)>0, \forall t>0$. Moreover,  the solution of the system \eqref{eq2.5} is bounded and global.
\end{lemma}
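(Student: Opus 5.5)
The plan is a continuity argument. We show that $I$ cannot reach zero at a first time, and then use positivity of $I$ to bound the energy norm, which gives global existence. Let $U$ be the solution from the well-posedness theorem on its maximal interval $[0,T_{\max})$. By continuity of $U$ in $\mathcal{H}$ and the embedding of Lemma \ref{lem2.2}, $t\mapsto I(t)$ is continuous.

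\textbf{Step 1: consequences of $I\ge 0$.} Wherever $I(t)\ge 0$, we have the identity
\[
J(t)=\frac{p-2}{2p}\Big(\lambda\|u\|_{H^2_*}^2+\kappa\!\int_\Omega\!\int_{\mathbb{R}}|\phi|^2+\int_0^\infty g\|\mu\|_{H^2_*}^2\Big)+\frac1p I(t).
\]
Combined with $E(t)=\frac12\|u_t\|^2+J(t)$ and the monotonicity $E'\le0$ from \eqref{eq2.7}, this yields
\[
\lambda\|u(t)\|_{H^2_*}^2\le \frac{2p}{p-2}J(t)\le\frac{2p}{p-2}E(t)\le \frac{2p}{p-2}E(0).
\]
At $t=0$ we have $J(0)\ge I(0)/p>0$, so $E(0)>0$ and the quantity in \eqref{eq4.3} makes sense.

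\textbf{Step 2: $I$ never vanishes.} Suppose, for contradiction, that there is a first time $T^*\in(0,T_{\max})$ with $I(T^*)=0$ and $I>0$ on $[0,T^*)$. Step 1 holds on $[0,T^*]$. Lemma \ref{lem2.2} then gives
\[
\|u\|_p^p\le C_e\|u\|_{H^2_*}^{p-2}\|u\|_{H^2_*}^2\le C_e\lambda^{-\frac{p-2}{2}}\Big(\tfrac{2p}{p-2}E(0)\Big)^{\frac{p-2}{2}}\|u\|_{H^2_*}^2 .
\]
By \eqref{eq4.3}, the right-hand side is strictly less than $\lambda\|u\|_{H^2_*}^2$ whenever $u(T^*)\neq0$. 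In that case $I(T^*)>0$, a contradiction.

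It remains to exclude $u(T^*)=0$, and this is the delicate point for \emph{strict} positivity. If $u(T^*)=0$ and $I(T^*)=0$, then $\phi(T^*)=0$ and $\int_0^\infty g\|\mu(T^*,s)\|^2ds=0$. Since $g>0$, this forces $\mu(T^*,s)=u(T^*)-u(T^*-s)=0$ for a.e. $s>0$, hence $u\equiv0$ on $(-\infty,T^*]$. In particular $u_0=0$ and the initial history vanishes, so $\mu_0\equiv0$. Together with $\phi(\cdot,0)=0$, this gives $I(0)=0$, contradicting $I(0)>0$. Therefore $I(t)>0$ for all $t\in[0,T_{\max})$.

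\textbf{Step 3: boundedness and globality.} Since $I>0$ throughout, Step 1 holds for every $t\in[0,T_{\max})$. Hence
\[
\|U(t)\|_{\mathcal{H}}^2\le \|u_t\|^2+\tfrac{2p}{p-2}\,2J(t)\le C_p E(t)\le C_pE(0),
\]
with $C_p$ depending only on $p$. The solution is therefore uniformly bounded in $\mathcal{H}$. The standard continuation property of the local semigroup solution (blow-up of $\|U\|_{\mathcal{H}}$ as $t\to T_{\max}$ if $T_{\max}<\infty$) then yields $T_{\max}=\infty$.

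The main obstacle is Step 2's degenerate case $u(T^*)=0$. There, positivity has to come from the memory and history components rather than from the strict inequality produced by \eqref{eq4.3}.
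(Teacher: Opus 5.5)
Your proposal is correct and follows essentially the same route as the paper: the identity $\frac{2p}{p-2}J(t)=\frac{2}{p-2}I(t)+\lambda\|u\|_{H^2_*(\Omega)}^2+\cdots$, energy monotonicity, Lemma~\ref{lem2.2} combined with \eqref{eq4.3} to get $\|u\|_p^p<\lambda\|u\|_{H^2_*(\Omega)}^2$, and finally the bound $\frac12\|u_t\|_2^2+\frac{\lambda(p-2)}{2p}\|u\|_{H^2_*(\Omega)}^2\le E(0)$ for boundedness and globality. Your extra treatment of the degenerate case $u(T^*)=0$ is a genuine refinement: you use $\phi(T^*)=0$, $g>0$ and the history representation of $\mu$ to force $I(0)=0$, whereas the paper simply asserts $\lambda\|u(t)\|_{H^2_*(\Omega)}^2-\|u(t)\|_p^p>0$, which fails when $u(t)=0$.
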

\begin{proof}
Since $u$ is continuous and $I\left(0\right)>0$, there exists $T^{*} \leq T$, such that $I(t) \geq 0$, for all $t \in\left[0, T^{*}\right)$.\\
Besides, we have
\begin{eqnarray*}
\frac{2p}{p-2} J(t)&=&\frac{2}{p-2} I(t)+ \lambda\|u(t)\|_{H^2_{*} (\Omega)}^{2}+\kappa \int_{\Omega} \int_{-\infty}^{+\infty}|\phi(x,y,\vartheta, t)|^{2} d\vartheta dx dy\nonumber\\
&&+\int_{0}^{+\infty} g(s)\left\|\mu(s)\right\|^{2}_{H^2_{*} (\Omega)} ds.
\end{eqnarray*}
This implies that
\begin{equation}\label{eq4.4}
\lambda\|u(t)\|_{H^2_{*} (\Omega)}^{2} \leq \frac{2 p}{p-2} J(t) \leq \frac{2 p}{p-2} E(0), \quad \forall t \in\left[0, T^{*}\right)
\end{equation}
Using \eqref{eq2.3}, \eqref{eq4.3} and \eqref{eq4.4},  we infer that
\begin{equation}
\begin{aligned}
\| u(t)\|_{p}^{p} &\leq C_e \|u(t)\|_{H^2_{*} (\Omega)}^{p} \\
& \leq C_{e}\left(\frac{2 p}{\lambda(p-2)} E(0)\right)^{\frac{p-2}{2 }} \| u(t)\|_{H^2_{*} (\Omega)}^{2} \\
&< \lambda \|u(t)\|_{H^2_{*} (\Omega)}^{2}, \quad \forall t \in\left[0, T^{*}\right)
\end{aligned}
\end{equation}
Hence, $\lambda\|u(t)\|_{H^2_{*} (\Omega)}^{2}-\| u(t)\|_{p}^{p}>0, \forall t \in\left[0, T^{*}\right)$. Therefore, we have $I(t)>0,\;\forall\; t \in\left[0, T^{*}\right)$. By repeating this procedure, $T^{*}$ can be extended to $T$.\\
In addition, one can easily see that
$$ \frac{1}{2}\left\|u_{t}(t)\right\|_{2}^{2}+\frac{\lambda(p-2)}{2 p }\|u(t)\|_{H^2_{*} (\Omega)}^{2}\leq \frac{1}{2}\left\|u_{t}(t)\right\|_{2}^{2}+J(t)=E(t)\leq E(0),$$
which implies that the solution of system \eqref{eq2.5} is bounded and global.
\end{proof}
{\color{black}{
\begin{remark}
According to the assumptions of Lemma \ref{lem4.1}, the quantity $\frac{\lambda}{2}\|u(t)\|_{H^2_{*} (\Omega)}^{2}-\frac{1}{p}\|u(t)\|_{p}^{p}$ is positive, and therefore the energy $E(t)$ is positive for all $t\geq 0$.
\end{remark}
}}
\section{Exponential Stability}\label{sec_stability}
In order to establish the energy decay result, let us consider the Lyapunov functional:
\begin{equation}\label{eq5.1}
\mathcal{L}(t)=N E(t)+N_{1} \psi_{1}(t)+N_2\psi_{2}(t),
\end{equation}
where $N, N_{1}$ and $N_{2}$ are positive constants that will be fixed later, and
$$
\begin{aligned}
& \psi_{1}(t)=\int_{\Omega} u u_{t} d x dy+\frac{\kappa}{2}\int_{\Omega} \int_{-\infty}^{+\infty}\left(\vartheta^{2}+\beta\right)\left(\int_{0}^{t} \phi(x,y,\vartheta,s) d s+\frac{u_{0}(x,y) \xi(\vartheta)}{\vartheta^{2}+\beta}\right)^{2} d \vartheta dx dy \\
& \psi_{2}(t)=-\int_{\Omega} u_{t} \int_{0}^{+\infty} g(s) \mu(s) ds dx dy
\end{aligned}
$$
\begin{lemma}\label{lemimpo}(\cite{aslam25})
Let $(u, u_t, \phi, \mu)$ be a regular solution of the problem \eqref{eq2.5}. Then, we have
$$
\begin{aligned}
& \int_{\Omega} \int_{-\infty}^{+\infty}\left(\vartheta^{2}+\beta\right) \phi(x,y,\vartheta,t) \int_{0}^{t} \phi(x,y,\vartheta,s) ds d\vartheta dx dy \\
= & \int_{\Omega} u(x,y,t) \int_{-\infty}^{+\infty} \phi(x,y,\vartheta,t) \xi(\vartheta) d\vartheta dx dy \\
& -\int_{\Omega} u_{0}(x,y) \int_{-\infty}^{+\infty} \phi(x,y,\vartheta,t) \xi(\vartheta) d\vartheta dx dy-\int_{\Omega} \int_{-\infty}^{+\infty}|\phi(x,y,\vartheta,t)|^{2} d\vartheta dx dy.
\end{aligned}
$$
holds.
\end{lemma}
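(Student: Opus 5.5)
The plan is to avoid any multiplier computation and instead integrate the equation for $\phi$ in \eqref{eq2.5} once in time, pointwise in $(x,y,\vartheta)$. The resulting identity, multiplied by $\phi(t)$ and integrated over $\mathbb{R}\times\Omega$, should give the claim directly.

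First, fix $(x,y)\in\Omega$ and $\vartheta\in\mathbb{R}$, and integrate
\[
\phi_t+(\vartheta^2+\beta)\phi-u_t\,\xi(\vartheta)=0
\]
over $s\in(0,t)$. The initial condition $\phi(x,y,\vartheta,0)=0$ and the initial datum $u(x,y,0)=u_0(x,y)$ then give
\[
\phi(x,y,\vartheta,t)+(\vartheta^2+\beta)\int_0^t\phi(x,y,\vartheta,s)\,ds=\big(u(x,y,t)-u_0(x,y)\big)\xi(\vartheta).
\]
For a regular solution, $\phi$ is $C^1$ in $t$ with values in $L^2(\Omega\times\mathbb{R})$, so this holds almost everywhere. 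We solve it for $(\vartheta^2+\beta)\int_0^t\phi\,ds$.

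Second, multiply by $\phi(x,y,\vartheta,t)$ and integrate over $\vartheta\in\mathbb{R}$ and then over $\Omega$. The right-hand side splits into three pieces: $u(t)\int\phi\,\xi\,d\vartheta$, then $-u_0\int\phi\,\xi\,d\vartheta$, and finally $-\int|\phi(t)|^2d\vartheta$. These are exactly the three terms on the right of the statement of Lemma \ref{lemimpo}, and the left-hand side is the claimed one.

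The only point needing care, which I expect to be the main (mild) obstacle, is justifying that each term is finite, so that the splitting is legitimate. For the $\xi$-terms, I will use Cauchy--Schwarz together with the lemma from \cite{Pod} applied with $\lambda=0$:
\[
\int_{\mathbb{R}}|\phi\,\xi|\,d\vartheta\le\Big(\int_{\mathbb{R}}(\vartheta^2+\beta)|\phi|^2d\vartheta\Big)^{1/2}\Big(\frac{\pi}{\sin(\alpha\pi)}\beta^{\alpha-1}\Big)^{1/2}.
\]
Here $\int_\Omega\int(\vartheta^2+\beta)|\phi|^2$ is finite for regular solutions, since $\xi\phi\in L^2$ is part of $D(A)$ and this quantity is also controlled through \eqref{eq2.7}. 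Combined with $u(t),u_0\in L^2(\Omega)$, this shows the $\xi$-terms are integrable. The left-hand side is then finite as a difference of finite quantities, which makes the identity rigorous for regular solutions.
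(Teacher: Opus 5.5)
Your proof is correct and is the standard argument behind this lemma; the paper itself gives no proof and defers to \cite{aslam25}. The argument integrates the $\phi$-equation of \eqref{eq2.5} over $(0,t)$ using $\phi(\cdot,0)=0$ and $u(\cdot,0)=u_0$, then multiplies by $\phi(t)$ and integrates over $\Omega\times\mathbb{R}$.

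One justification should be tightened: the finiteness of $\int_\Omega\int_{\mathbb{R}}(\vartheta^2+\beta)|\phi(t)|^2\,d\vartheta\,dx\,dy$ at each fixed $t$. Read literally, the condition $\xi(\vartheta)\phi\in L^2$ does not give it, and \eqref{eq2.7} only controls its time integral. It does follow at once from the domain condition $(\vartheta^2+\beta)\phi-u_t\,\xi\in L^2(\Omega\times\mathbb{R})$ together with $\int_{\mathbb{R}}\xi^2(\vartheta)(\vartheta^2+\beta)^{-1}\,d\vartheta<\infty$.
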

\begin{lemma}
Let $(u, u_t, \phi, \mu)$ be a solution of problem \eqref{eq2.5}, then there exist two positive constants $\alpha_{1}$ and $\alpha_{2}$ such that
\begin{equation}\label{equ5.4}
\alpha_{1} E(t) \leq \mathcal{L}(t) \leq \alpha_{2} E(t).
\end{equation}
The proof is obvious.
\end{lemma}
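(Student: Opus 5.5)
The plan is to show that $\psi_1$ and $\psi_2$ are each bounded in absolute value by a constant multiple of a quadratic ``norm'' of the state. This norm is itself equivalent to $E(t)$ under the hypotheses of Lemma~\ref{lem4.1}. Taking $N$ large relative to $N_1,N_2$ then gives \eqref{equ5.4}. Set
\[
F(t)=\|u_t\|_2^2+\lambda\|u\|^2_{H^2_*(\Omega)}+\kappa\int_\Omega\int_{-\infty}^{+\infty}|\phi|^2\,d\vartheta\,dx\,dy+\int_0^{+\infty}g(s)\|\mu(s)\|^2_{H^2_*(\Omega)}ds .
\]
First I show $c_1F\le E\le c_2F$. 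The upper bound is immediate from \eqref{eq2.6}, since we only drop the term $-\frac1p\|u\|_p^p\le 0$. For the lower bound I use Lemma~\ref{lem4.1}, which gives $I(t)>0$, together with the identity $\frac{2p}{p-2}J=\frac{2}{p-2}I+(\text{nonnegative terms})$ from its proof. This yields
\[
E(t)\ge \tfrac12\|u_t\|_2^2+\tfrac{p-2}{2p}\Big(\lambda\|u\|^2_{H^2_*}+\kappa\!\int\!\!\int|\phi|^2+\int_0^\infty g\|\mu\|^2_{H^2_*}\Big)\ge \tfrac{p-2}{2p}F(t).
\]

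Next, $\psi_2$ is handled by Cauchy--Schwarz in $s$ with weight $g$, using $\int_0^\infty g=1-\lambda$, then Lemma~\ref{lem2.2} (with $r=2$) to pass from $\|\mu(s)\|_2$ to $\|\mu(s)\|_{H^2_*}$, then Young's inequality:
\[
|\psi_2|\le \tfrac12\|u_t\|_2^2+\tfrac{(1-\lambda)C}{2}\int_0^\infty g(s)\|\mu(s)\|^2_{H^2_*}ds\le cF(t).
\]
The first term of $\psi_1$ is handled the same way: $|\int_\Omega uu_t|\le \frac12\|u_t\|^2_2+C\|u\|^2_{H^2_*}\le cF$.

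The main obstacle is the second term of $\psi_1$. It contains $\int_0^t\phi\,ds$, which is not obviously controlled by the current energy. The key step is to integrate the $\phi$-equation of \eqref{eq2.5} in time from $0$ to $t$, using $\phi(\cdot,0)=0$, which gives
\[
(\vartheta^2+\beta)\int_0^t\phi(s)\,ds+u_0\xi(\vartheta)=u(t)\xi(\vartheta)-\phi(t).
\]
Hence the integrand equals $\frac{(u(t)\xi(\vartheta)-\phi(t))^2}{\vartheta^2+\beta}$. This term is therefore nonnegative, and it is bounded above by
\[
2\|u\|_2^2\int_{-\infty}^{+\infty}\frac{\xi^2(\vartheta)}{\vartheta^2+\beta}d\vartheta+\frac2\beta\int_\Omega\int_{-\infty}^{+\infty}|\phi|^2 .
\]
The $\vartheta$-integral equals $\frac{\pi}{\sin(\alpha\pi)}\beta^{\alpha-1}<\infty$ by the lemma from \cite{Pod} with $\lambda=0\in D_\beta$. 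Using Lemma~\ref{lem2.2} again, this term is at most $cF(t)$.

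Collecting the estimates gives $|N_1\psi_1+N_2\psi_2|\le c(N_1+N_2)F\le \frac{c(N_1+N_2)}{c_1}E$. Choosing $N$ large, so that $N-\frac{c(N_1+N_2)}{c_1}>0$, yields \eqref{equ5.4} with $\alpha_1=N-\frac{c(N_1+N_2)}{c_1}$ and $\alpha_2=N+\frac{c(N_1+N_2)}{c_1}$.
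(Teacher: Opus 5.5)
Your proof is correct. It is the standard argument the paper has in mind when it calls the lemma obvious: bound $|\psi_1|$ and $|\psi_2|$ by a multiple of the quadratic quantity $F$, use the hypotheses of Lemma~\ref{lem4.1} (as assumed in Theorem~\ref{thm5.5}) to get $F\le\frac{2p}{p-2}E$, and take $N$ large. You also supply the two points the paper leaves implicit: these hypotheses are genuinely needed, and the term in $\psi_1$ containing $\int_0^t\phi\,ds$ is controlled by the current state through the integrated $\phi$-equation, which is the identity underlying Lemma~\ref{lemimpo}.
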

\begin{lemma}
Let $(u, u_t, \phi, \mu)$ be the solution of \eqref{eq2.5}, then, we have
\begin{equation}
\begin{aligned}\label{equ5.5}
\psi_{1}^{\prime}(t)\leq & \left\|u_{t}\right\|_{2}^{2}-\frac{\lambda}{2}\|u\|^2_{H^2_*(\Omega)}+\frac{1-\lambda}{2\lambda} \int_{0}^{+\infty} g(s)\left\|\mu(s)\right\|^2_{H^2_*(\Omega)} d s \\
& -\kappa \int_{\Omega} \int_{-\infty}^{+\infty}|\phi(x,y,\vartheta,t)|^2 d\vartheta dx dy+\|u(t)\|_{p}^{p}
\end{aligned}
\end{equation}
\end{lemma}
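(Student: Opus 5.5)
We differentiate $\psi_1$ along a regular solution and substitute the first equation of \eqref{eq2.5}. The first term gives
\[
\frac{d}{dt}\int_\Omega uu_t=\|u_t\|_2^2+\int_\Omega u\,u_{tt}.
\]
Inserting $u_{tt}$ from \eqref{eq2.5} and integrating by parts with the boundary conditions \eqref{bc} (via the weak formulation behind the $H^2_*$ scalar product) yields
\[
\|u_t\|_2^2-\lambda\|u\|^2_{H^2_*}-\int_0^{\infty}g(s)\,(u,\mu(s))_{H^2_*}\,ds-\kappa\int_\Omega u\int_{\mathbb R}\phi\,\xi\,d\vartheta+\|u\|_p^p .
\]

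For the second part of $\psi_1$, write $\Phi(t)=\int_0^t\phi\,ds+\frac{u_0\xi}{\vartheta^2+\beta}$. Its time derivative is
\[
\kappa\int_\Omega\int_{\mathbb R}(\vartheta^2+\beta)\phi\,\Phi\,d\vartheta
=\kappa\int_\Omega\int_{\mathbb R}(\vartheta^2+\beta)\phi\int_0^t\phi\,ds\,d\vartheta+\kappa\int_\Omega u_0\int_{\mathbb R}\phi\,\xi\,d\vartheta .
\]
Applying Lemma \ref{lemimpo} to the first piece, the $u_0$-terms cancel. What remains is
\[
\kappa\int_\Omega u\int_{\mathbb R}\phi\,\xi\,d\vartheta-\kappa\int_\Omega\int_{\mathbb R}|\phi|^2 .
\]
The first of these cancels exactly the fractional coupling term from the previous paragraph. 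This cancellation is the point of the weight chosen in $\psi_1$, and it is the step I would check most carefully; if the sign conventions in the $\phi$-equation differ, Lemma \ref{lemimpo} would have to be re-derived by integrating the $\phi$-equation in time.

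It remains to absorb the memory cross term. By Cauchy--Schwarz and (H1), $\int_0^\infty g=1-\lambda$, so
\[
\Big|\int_0^\infty g(s)(u,\mu(s))_{H^2_*}ds\Big|\le \frac{\lambda}{2}\|u\|^2_{H^2_*}+\frac{1}{2\lambda}\Big(\int_0^\infty g\,\|\mu\|_{H^2_*}\Big)^2\le \frac{\lambda}{2}\|u\|^2_{H^2_*}+\frac{1-\lambda}{2\lambda}\int_0^\infty g(s)\|\mu(s)\|^2_{H^2_*}ds .
\]
The second inequality is Jensen/Cauchy--Schwarz with weight $g$. Collecting the terms gives \eqref{equ5.5}, with equality up to this Young inequality.

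For weak solutions the estimate then follows by density of $D(A)$ and continuous dependence.
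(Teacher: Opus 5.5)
Your proposal is correct and follows the paper's own proof. You differentiate $\psi_1$ and substitute the first equation of \eqref{eq2.5}. Lemma~\ref{lemimpo} then cancels both the $u_0$-terms and the fractional coupling term $\kappa\int_\Omega u\int_{\mathbb{R}}\phi\,\xi\,d\vartheta$, leaving $-\kappa\int_\Omega\int_{\mathbb{R}}|\phi|^2$. Finally, the memory cross term is bounded by Cauchy--Schwarz and Young using $\int_0^\infty g=1-\lambda$, exactly as in \eqref{psi11}.
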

\begin{proof}
By differentiating $\psi_1(t)$ and using \eqref{eq2.5} and Lemma \ref{lemimpo}, we obtain
\begin{eqnarray}\label{psi1}
\psi^{\prime}_1(t)&=&\|u_{t}\|_{2}^{2}-\lambda \|u\|^2_{H^2_*}- \displaystyle\int_{0}^{+\infty} g(s) \int_{\Omega} \Delta^{2}\mu(s)\,u(x,y,t)\,dx dy\,ds\nonumber\\
&&-\kappa \displaystyle\int_{\Omega} u(x,y,t) \int_{-\infty}^{+\infty}
            \phi(x,y,\vartheta,t)\,\xi(\vartheta)\,d\vartheta  dx dy+ \|u\|_{p}^{p}\nonumber\\
&&+\kappa\int_{\Omega} \int_{-\infty}^{+\infty}\left(\vartheta^{2}+\beta\right)\phi(x,y,\vartheta,t)\int_{0}^{t} \phi(x,y,\vartheta,s) d s d\vartheta dx dy\nonumber\\
&&+\kappa\int_{\Omega} \int_{-\infty}^{+\infty}\left(\vartheta^{2}+\beta\right)\phi(x,y,\vartheta,t)\frac{u_{0}(x,y) \xi(\vartheta)}{\vartheta^{2}+\beta} d\vartheta dx dy\nonumber\\
&=&\|u_{t}\|_{2}^{2}-\lambda \|u\|^2_{H^2_*}- \displaystyle\int_{0}^{+\infty} g(s) \int_{\Omega} \Delta^{2}\mu(s)\,u(x,y,t)\,dx dy\,ds\nonumber\\
&&-\kappa \int_{\Omega} \int_{-\infty}^{+\infty}|\phi(x,y,\vartheta,t)|^2 d\vartheta dx dy+\|u(t)\|_{p}^{p}.
\end{eqnarray}
By using  Cauchy-Schwarz and Young inequalities, we get
\begin{align}\label{psi11}
&-\int_{0}^{+\infty} g(s)\int_{\Omega} \Delta^{2}\mu(s) u(x,y,t) dx dyds\nonumber\\
&= -\int_{0}^{+\infty} g(s) ( \mu(s), u)_{H_*^{2}(\Omega)}\,ds \nonumber\\
&\le \frac{\lambda}{2}\|u\|^2_{H^2_*(\Omega)}
  + \frac{1-\lambda}{2\lambda}
    \int_{0}^{+\infty} g(s)\,\|\mu(s)\|_{H_*^{2}(\Omega)}^{2}\,ds.
\end{align}
Inserting \eqref{psi11} in \eqref{psi1}, we get \eqref{equ5.5}.
\end{proof}
\begin{lemma}\label{lem5.5}
Let $(u, u_t, \phi, \mu)$ be the solution of \eqref{eq2.5}. Then, we have, for any $\delta, \delta_1>0$,
\begin{eqnarray}\label{equ5.7}
\psi^{\prime}_2(t)&\leq&-\frac{1-\lambda}{2}\left\|u_{t}\right\|_{2}^{2}+\left(\frac{C_{e}\delta}{2}\left[\frac{2 p E(0)}{\lambda(p-2) }\right]^{p-2}+\frac{\lambda\delta_1}{2}\right)\| u\|_{H_*^{2}(\Omega)}^{2} \nonumber\\
&&+(1-\lambda)\left(1+\frac{C_e c_0^2 }{2(1-\lambda)}+\frac{C_e}{2\delta}+\frac{\lambda}{2\delta_1}+\frac{AC_e}{2}\right) \int_{0}^{+\infty} g(s)\|\mu(s)\|_{H_*^{2}(\Omega)}^{2} ds\nonumber\\
&&+\frac{\kappa^2}{2} \int_{\Omega}\int_{-\infty}^{+\infty}\left(\vartheta^{2}+\beta\right)|\phi(x,y,\vartheta, t)|^{2} d\vartheta dx dy.
\end{eqnarray}
\end{lemma}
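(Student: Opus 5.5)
We differentiate $\psi_2$ directly and then estimate each of the resulting terms with Cauchy--Schwarz, Young and Lemma~\ref{lem2.2}. The only additional ingredient is the uniform $H^2_*$ bound \eqref{eq4.4} from Lemma~\ref{lem4.1}, which lets us absorb the source term.

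\textbf{Step 1: differentiate $\psi_2$.} Write
\[
\psi_2(t)=-\int_\Omega u_t\int_0^{+\infty}g(s)\mu(s)\,ds\,dx\,dy .
\]
Its derivative is
\[
\psi_2'=-\int_\Omega u_{tt}\int_0^{+\infty}g\mu\,ds-\int_\Omega u_t\int_0^{+\infty}g\mu_t\,ds .
\]
\emph{Second term.} Use the history equation $\mu_t=u_t-\mu_s$ and integrate by parts in $s$. The boundary term vanishes because $\mu(\cdot,0)=0$ and $g(s)\mu(s)\to0$ as $s\to\infty$. This gives
\[
-(1-\lambda)\|u_t\|_2^2-\int_\Omega u_t\int_0^{+\infty}g'(s)\mu(s)\,ds .
\]
By (H2), $|g'|\le c_0 g$. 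Applying Young's inequality with weight $\frac{1-\lambda}{2}$, together with $\|\mu\|_2^2\le C_e\|\mu\|_{H^2_*}^2$ (Lemma~\ref{lem2.2} with $r=2$), this part is bounded by
\[
-\frac{1-\lambda}{2}\|u_t\|_2^2+\frac{C_e c_0^2}{2}\int_0^{+\infty}g\|\mu\|^2_{H^2_*}\,ds .
\]
In the last step we used Cauchy--Schwarz in $s$: $\bigl(\int g\|\mu\|\bigr)^2\le(1-\lambda)\int g\|\mu\|^2$.

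\textbf{Step 2: substitute the equation for $u_{tt}$ from \eqref{eq2.5}.} This produces four contributions.

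1. The $\lambda\Delta^2u$ term gives $\lambda\int_0^\infty g(s)(u,\mu(s))_{H^2_*}ds$. Young's inequality with $\delta_1$ bounds it by
\[
\frac{\lambda\delta_1}{2}\|u\|^2_{H^2_*}+\frac{\lambda(1-\lambda)}{2\delta_1}\int g\|\mu\|^2_{H^2_*}.
\]

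2. The memory term gives $\bigl\|\int g\mu\bigr\|^2_{H^2_*}\le(1-\lambda)\int g\|\mu\|^2_{H^2_*}$. This accounts for the leading ``$1$'' in the bracket of \eqref{equ5.7}.

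3. The fractional term is $\kappa\int_\Omega\bigl(\int\phi\xi\,d\vartheta\bigr)\bigl(\int g\mu\bigr)$. Split $\xi\phi=\frac{\xi}{\sqrt{\vartheta^2+\beta}}\sqrt{\vartheta^2+\beta}\,\phi$ and apply Cauchy--Schwarz in $\vartheta$. The weight integral $\int\xi^2/(\vartheta^2+\beta)\,d\vartheta=\frac{\pi}{\sin\alpha\pi}\beta^{\alpha-1}$ is finite by the lemma from \cite{Pod} with $\lambda=0$; call it $A$. Young's inequality then yields
\[
\frac{\kappa^2}{2}\iint(\vartheta^2+\beta)|\phi|^2+\frac{A C_e(1-\lambda)}{2}\int g\|\mu\|^2_{H^2_*},
\]
which is exactly the form of the $\phi$-term and the $AC_e$ entry in the statement.

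4. The source term $-\int_\Omega|u|^{p-2}u\int g\mu$. Apply Young with parameter $\delta$ to get
\[
\frac{\delta}{2}\|u\|_{2(p-1)}^{2(p-1)}+\frac{1}{2\delta}\Bigl\|\int g\mu\Bigr\|_2^2 .
\]
The second piece is at most $\frac{C_e(1-\lambda)}{2\delta}\int g\|\mu\|^2_{H^2_*}$. For the first, Lemma~\ref{lem2.2} with $r=2(p-1)$ gives $\|u\|_{2(p-1)}^{2(p-1)}\le C_e\|u\|^{2(p-1)}_{H^2_*}$. We then factor out $\|u\|^2_{H^2_*}$ and bound the remaining power $\|u\|_{H^2_*}^{2(p-2)}$ by $\bigl[\frac{2pE(0)}{\lambda(p-2)}\bigr]^{p-2}$ using \eqref{eq4.4}. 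This is valid because Lemma~\ref{lem4.1} keeps $I>0$ and $E(t)\le E(0)$ for all time.

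\textbf{Step 3: collect terms.} Factoring $(1-\lambda)$ out of all the $\mu$-coefficients gives \eqref{equ5.7}.

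\textbf{Main obstacle.} The delicate step is the source term. Controlling $\|u\|^{2(p-1)}_{2(p-1)}$ by a \emph{quadratic} quantity requires the global a priori bound \eqref{eq4.4}, so this lemma holds only under the hypotheses of Lemma~\ref{lem4.1}. A secondary point is justifying the integration by parts in $s$ (the decay of $g(s)\mu(s)$ at infinity), which we do for strong solutions and then extend by density. Constants may differ slightly from the stated ones, for instance through the exact form of $A$, but the structure is as above.
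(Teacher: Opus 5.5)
Your proposal is correct and follows essentially the same route as the paper: differentiate $\psi_2$, use the history equation with integration by parts in $s$ and (H2), then substitute $u_{tt}$ from \eqref{eq2.5} and bound the four resulting terms with Young, Cauchy--Schwarz and Lemma~\ref{lem2.2}; your $A=\int_{\mathbb{R}}\xi^2/(\vartheta^2+\beta)\,d\vartheta$ is exactly the paper's $M$, and your constants match \eqref{equ5.7}. You are also right that the source-term estimate relies on \eqref{eq4.4}, so the lemma actually needs the hypotheses of Lemma~\ref{lem4.1}; the paper leaves this assumption implicit, and the lemma is only used inside Theorem~\ref{thm5.5}, which assumes them.
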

\begin{proof}
Multiplying the first equation of \eqref{eq2.5} by $\displaystyle\int_{0}^{+\infty} g(s) \mu(s) ds$ and integrating by parts over $\Omega$, we obtain
\begin{eqnarray}\label{psi22}
&&\frac{d}{dt}\left( - \int_{\Omega} \int_{0}^{+\infty}g(s)\mu(s)u_{t}ds dx dy \right)\nonumber\\
&=&\lambda \int_{0}^{+\infty}g(s)(\mu(s), u)_{H^2_*(\Omega)}ds- \int_{\Omega} \int_{0}^{+\infty}g(s)\mu_{t}(s)u_{t}ds dx dy \nonumber\\
&&+ \Big\|\int_{0}^{+\infty} g(s)\mu(s)ds\Big\|^2_{H^2_*(\Omega)}+ \kappa \left( \int_{0}^{+\infty} g(s)\mu(s)ds\right)\int_{\Omega} \int_{-\infty}^{+\infty}
\phi(x,y,\vartheta,t)\xi(\vartheta)d\vartheta dx dy \nonumber\\
&&- \left( \int_{0}^{+\infty} g(s)\mu(s)ds \right)\int_{\Omega} u|u|^{p-2}dx dy.
\end{eqnarray}
Besides, multiplying the third equation of \eqref{eq2.5} by  $g(s)u_{t}$ and integrating over  $(0,+\infty)\times \Omega$, we get
\begin{align}\label{psi222}
\int_{0}^{+\infty} \int_{\Omega} g(s)\,\mu_{t}(s)\,u_{t}\,dx\,ds
&= (1-\lambda)\int_{\Omega} |u_{t}|^{2}\,dx
   - \int_{\Omega} \int_{0}^{+\infty} g(s)\,\mu_{t}(s)\,u_{t}\,ds dx dy.
\end{align}
Combining \eqref{psi22} and \eqref{psi222}, it holds that
\begin{eqnarray}\label{psi2222}
\psi^{\prime}_{2}(t)&=& -(1-\lambda)\|u_{t}\|_{2}^{2}
 + \int_{\Omega} \int_{0}^{+\infty} g(s)\mu_{s}(s)u_{t} ds dx dy
 + \lambda \int_{0}^{+\infty}g(s)(\mu(s), u)_{H^2_*(\Omega)} ds \nonumber\\
&&+ \Big\|
     \int_{0}^{+\infty} g(s)\mu(s)ds
   \Big\|^2_{H^2_*(\Omega)}
 + \kappa\displaystyle\int_\Omega\left( \int_{0}^{+\infty} g(s)\mu(s)ds \right)
     \int_{-\infty}^{+\infty}
        \phi(x,y,\vartheta,t)\xi(\vartheta)d\vartheta dx dy \nonumber\\
&&- \left( \int_{0}^{+\infty} g(s)\mu(s)ds \right)
   \int_{\Omega} u|u|^{p-2} dx dy.
\end{eqnarray}
Now, we will estimate the terms on the right-hand side of \eqref{psi2222}.
Integrating by parts and combining Young's inequality and Cauchy-Schwarz inequality, we find
\begin{align}\label{psi2^5}
\int_{0}^{+\infty} \int_{\Omega} g(s)\mu_{s}(s)u_{t}dx dy ds
&= - \int_{0}^{+\infty} \int_{\Omega} g'(s)\mu(s)u_{t}dx dy ds \nonumber\\
&\le \frac{1-\lambda}{2}\left\|u_{t}\right\|_{2}^{2}
   + \frac{c_{0}^{2}}{2(1-\lambda)}
     \int_{\Omega} \left( \int_{0}^{+\infty} g(s)\mu^{2}(s)ds \right)^{2} dx dy \nonumber\\
&\le \frac{1-\lambda}{2}\left\|u_{t}\right\|_{2}^{2}
   + \frac{c_{0}^{2}(1-\lambda)}{2(1-\lambda)}
     \int_{\Omega} \int_{0}^{+\infty} g(s)\,\mu^{2}(s) ds dx dy \nonumber\\
&\le \frac{1-\lambda}{2}\left\|u_{t}\right\|_{2}^{2}
   + \frac{c_{0}^{2}C_{e}}{2}
     \int_{0}^{+\infty} g(s)\|\mu(s)\|^2_{H^2_*(\Omega)} ds.
\end{align}
By the help of Young and Cauchy-Schwarz inequalities, the terms $\lambda \displaystyle\int_{0}^{+\infty}g(s)(\mu(s), u(\cdot,s))_{H^2_*} ds$ and  $\Big\| \displaystyle\int_{0}^{+\infty} g(s) \mu(s)ds \Big\|^2_{H^2_*(\Omega)}$ can be estimated as follows:
\begin{align}\label{psi2^6}
\lambda \int_{0}^{+\infty} g(s)\,
   \langle \mu(s), u\rangle_{H_*^{2}}\,ds
&\le \frac{\lambda\delta_1}{2}\|u\|_{H_*^{2}(\Omega)}^{2}
 + \frac{\lambda(1-\lambda)}{2\delta_1}
   \int_{0}^{+\infty} g(s)\|\mu(s)\|_{H_*^{2}(\Omega)}^{2}ds
\end{align}
and
\begin{align}\label{psi2^7}
\Big\| \int_{0}^{+\infty} g(s)\mu(s) ds
\Big\|^2_{H^2_*(\Omega)}
&\le\left\|\left( \int_{0}^{+\infty} g(s) ds \right)^{1/2}\left( \int_{0}^{+\infty} g(s)\,\mu^{2}(s)\,ds \right)^{1/2}
\right\|_{H_*^{2}(\Omega)}^{2} \\
&\le (1-\lambda)\int_{0}^{+\infty} g(s)
     \|\mu(s)\|_{H_*^{2}(\Omega)}^{2}ds.
\end{align}
Next, by Cauchy-Schwarz inequality, we have
\begin{align*}
\left| -\int_{-\infty}^{+\infty}
    \phi(x,y,\vartheta,t)\,\xi(\vartheta)\,d\vartheta \right|
&\le
\left( \int_{-\infty}^{+\infty}
       \frac{\xi^2(\vartheta)}{\vartheta^2+\beta}\,d\vartheta \right)^{1/2}
\left( \int_{-\infty}^{+\infty}
       (\vartheta^2+\beta)\,\phi^2(x,y,\vartheta,t)\,d\vartheta \right)^{1/2}.
\end{align*}
Then, Young's inequality yields
\begin{eqnarray}\label{psi2^8}
&&\left| \kappa\displaystyle\int_\Omega\left( \int_{0}^{+\infty} g(s)\mu(s)ds \right)\int_{-\infty}^{+\infty}\phi(x,y,\vartheta,t)\xi(\vartheta)d\vartheta dx dy  \right|\nonumber\\
&&\le \frac{M}{2} \displaystyle\int_\Omega\left( \int_{0}^{+\infty} g(s)\mu(s)ds \right)^2dx dy+\frac{\kappa^2}{2}\displaystyle\int_\Omega\int_{-\infty}^{+\infty}
       (\vartheta^2+\beta)|\phi(x,y,\vartheta,t)|^2d\vartheta dx dy\nonumber\\
&&\le\frac{MC_e(1-\lambda)}{2}\int_{0}^{+\infty} g(s)
     \|\mu(s)\|_{H_*^{2}(\Omega)}^{2}ds\nonumber\\
&&\hspace{4cm}+\frac{\kappa^2}{2}\displaystyle\int_\Omega\int_{-\infty}^{+\infty}
       (\vartheta^2+\beta)|\phi(x,y,\vartheta,t)|^2d\vartheta dx dy,
\end{eqnarray}
where $$M=\int_{-\infty}^{+\infty}
       \frac{\xi^2(\vartheta)}{\vartheta^2+\beta}\,d\vartheta.$$
Using Young's and Cauchy-Schwarz's inequalities, the last term is estimated as follows:
\begin{eqnarray} \label{psi2^10}
&&-\left( \int_{0}^{+\infty} g(s)\mu(s)ds \right)\int_{\Omega} u|u|^{p-2}dx dy \nonumber\\
&\leq& \frac{\delta}{2}\|u(t)\|_{2p-2}^{2p-2}+\frac{C_e(1-\lambda)}{2\delta}\int_{0}^{+\infty} g(s)
     \|\mu(s)\|_{H_*^{2}(\Omega)}^{2}ds\nonumber\\
&\leq&\frac{\delta C_e}{2}\|u(t)\|_{H^2_*(\Omega)}^{2p-2}+\frac{C_e(1-\lambda)}{2\delta}\int_{0}^{+\infty} g(s)
     \|\mu(s)\|_{H_*^{2}(\Omega)}^{2}ds\nonumber\\
&\leq& \frac{\delta C_{e}}{2}\left[\frac{2 p E(0)}{\lambda(p-2) }\right]^{p-2}\|u(t)\|_{H^2_*(\Omega)}^{2}+\frac{C_e(1-\lambda)}{2\delta}\int_{0}^{+\infty} g(s)\|\mu(s)\|_{H_*^{2}(\Omega)}^{2}ds.
\end{eqnarray}
Inserting the above estimates \eqref{psi2^5}-\eqref{psi2^10} into \eqref{psi2222}, we get the desired estimation.
\end{proof}
\begin{theorem}
\label{thm5.5}
Suppose that the conditions of Lemma \ref{lem4.1} hold. Then there exist positive constants $\theta$ and $\zeta$ such that the global solution of problem \eqref{eq2.5} satisfies
\begin{equation}\label{equ5.14}
E(t) \leq \theta E(0) e^{- \zeta t},\;\forall\;t\geq0.
\end{equation}
\end{theorem}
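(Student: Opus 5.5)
We will use the Lyapunov functional $\mathcal{L}$ in \eqref{eq5.1}. Since $\mathcal{L}\sim E$ by \eqref{equ5.4}, it suffices to show $\mathcal{L}'(t)\le -\zeta_0 E(t)$ for some $\zeta_0>0$. Then $\mathcal{L}'\le -(\zeta_0/\alpha_2)\mathcal{L}$, Gronwall gives $\mathcal{L}(t)\le \mathcal{L}(0)e^{-\zeta t}$, and \eqref{equ5.4} yields \eqref{equ5.14} with $\theta=\alpha_2/\alpha_1$.

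\textbf{Step 1: combining the derivative estimates.} Add $N E'$ from \eqref{eq2.7}, $N_1\psi_1'$ from \eqref{equ5.5} and $N_2\psi_2'$ from \eqref{equ5.7}. By (H2), $g'\le -c_1 g$, so
\[
E'(t)\le -\frac{c_1}{2}\int_0^{+\infty} g(s)\|\mu(s)\|^2_{H^2_*}ds-\kappa\int_\Omega\int_{-\infty}^{+\infty}(\vartheta^2+\beta)|\phi|^2 d\vartheta\,dx\,dy .
\]
The dangerous source term $N_1\|u\|_p^p$ from $\psi_1'$ is handled with Lemma \ref{lem4.1}. As in its proof, $\|u\|_p^p\le C_e\big(\tfrac{2p}{\lambda(p-2)}E(0)\big)^{\frac{p-2}{2}}\|u\|^2_{H^2_*}=\lambda\gamma\|u\|^2_{H^2_*}$ with $\gamma<1$ by \eqref{eq4.3}. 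The $H^2_*$ coefficient from $\psi_1'$ is then $-\lambda(\tfrac12-\gamma)$. This is negative only if $\gamma<1/2$, which is a concern. To avoid the loss, I would redo \eqref{psi11} with a Young parameter $\varepsilon$: the cross term becomes $\varepsilon\lambda\|u\|^2+\frac{1-\lambda}{4\varepsilon\lambda}\int g\|\mu\|^2$. Choosing $\varepsilon<1-\gamma$ leaves a coefficient $-\lambda(1-\gamma-\varepsilon)<0$, and the cost is only a larger multiple of the memory term, which $N$ absorbs.

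\textbf{Step 2: choosing the constants.} I fix the constants in this order.
\begin{itemize}
\item[(i)] Take $N_2$ so that $N_2\frac{1-\lambda}{2}>N_1$. Then the $\|u_t\|^2$ coefficient $N_1-N_2\frac{1-\lambda}{2}$ is negative.
\item[(ii)] Pick $\delta,\delta_1$ small so that $N_2\big(\tfrac{C_e\delta}{2}[\cdots]^{p-2}+\tfrac{\lambda\delta_1}{2}\big)$ is at most half the negative $H^2_*$ coefficient coming from $N_1\psi_1'$. This requires $N_1$ to be large relative to $N_2$. The two requirements are compatible only if $N_1$ is fixed first and $N_2$ is then chosen with $N_2(1-\lambda)/2>N_1$. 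After that, $\delta,\delta_1$ are chosen depending on $N_1,N_2$, which is possible because their contributions are multiplied by $\delta,\delta_1$.
\item[(iii)] Finally take $N$ large. Then $N\frac{c_1}{2}$ dominates all memory coefficients, and $N\kappa$ dominates $N_2\kappa^2/2$ in the $(\vartheta^2+\beta)|\phi|^2$ term. The $-N_1\kappa\int|\phi|^2$ term is already favourable.
\end{itemize}
Taking $N$ large also secures \eqref{equ5.4}.

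\textbf{Step 3: comparison with $E$.} After these choices,
\[
\mathcal{L}'\le -c\Big(\|u_t\|^2+\|u\|^2_{H^2_*}+\int g\|\mu\|^2+\int\!\!\int|\phi|^2\Big),
\]
using $\vartheta^2+\beta\ge\beta$. Since $-\frac1p\|u\|_p^p\le0$, this right-hand side is at most $-\zeta_0 E(t)$.

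The main obstacle is the term $\|u\|_p^p$ in $\psi_1'$. The smallness condition \eqref{eq4.3} must give a strict gap below $\lambda\|u\|^2_{H^2_*}$ uniformly in time. I would rely on the uniform constant $\gamma<1$ together with the flexible Young split in \eqref{psi11}.

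A further issue is the extra term from $\psi_1$ involving $u_0$. Its derivative was already accounted for in \eqref{equ5.5} through Lemma \ref{lemimpo}, so it needs no separate treatment. It only affects the bound $\mathcal{L}\le\alpha_2E$, which holds up to a constant depending on $u_0$; the statement's constant $\theta$ can absorb this.
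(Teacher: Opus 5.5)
Your proof follows the paper's route. You use the same functional $\mathcal{L}=NE+N_1\psi_1+N_2\psi_2$, the same estimates \eqref{equ5.5} and \eqref{equ5.7}, and essentially the same order of constants. The paper fixes $N_1=1$, ties $\delta_1,\delta$ to $N_2$ so that the $N_2$-contributions to the $\|u\|^2_{H^2_*}$ coefficient total $\lambda/4$, then takes $N_2>2/(1-\lambda)$ and finally $N$ large.

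The real difference is the source term, and there your version is the sound one. The paper keeps the fixed split $\lambda/2$ in \eqref{psi11}, ends with coefficient $-\lambda/4$ on $\|u\|^2_{H^2_*}$, and then asserts $\mathcal{L}'\le-\alpha_3E$ with $+N_1\|u\|_p^p$ still on the right-hand side. It never disposes of that term. Condition \eqref{eq4.3} only yields $\|u\|_p^p\le\gamma\lambda\|u\|^2_{H^2_*}$ with $\gamma=C_e\big(\frac{2p}{p-2}E(0)\big)^{\frac{p-2}{2}}\lambda^{-p/2}<1$. So the paper's choices tacitly need $\gamma<1/4$, and with the split fixed at $\lambda/2$ no choice of $\delta,\delta_1$ gets beyond $\gamma<1/2$. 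Your split with $\varepsilon<1-\gamma$ leaves $-\lambda(1-\gamma-\varepsilon)<0$. Its only cost is a larger memory coefficient, which $N$ absorbs, so it covers the full range allowed by Lemma \ref{lem4.1}.

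The point to repair is your final paragraph. If the $u_0$-term only gave $\mathcal{L}\le\alpha_2E+C(u_0)$, the step $\mathcal{L}'\le-(\zeta_0/\alpha_2)\mathcal{L}$ would fail. Nothing you wrote supports a multiplicative $u_0$-dependent constant instead. What makes \eqref{equ5.4} true is the identity obtained by integrating the $\phi$-equation over $(0,t)$ with $\phi(\cdot,0)=0$:
\[
(\vartheta^2+\beta)\int_0^t\phi\,ds+u_0\,\xi=u(t)\,\xi-\phi(t).
\]
This is the pointwise form of Lemma \ref{lemimpo}. It shows that the $u_0$-term of $\psi_1$ equals $\frac{\kappa}{2}\int_\Omega\int_{\mathbb{R}}\frac{|u\xi-\phi|^2}{\vartheta^2+\beta}\,d\vartheta\,dx\,dy$, which is at most $\kappa M\|u\|_2^2+\frac{\kappa}{\beta}\int_\Omega\int_{\mathbb{R}}|\phi|^2\,d\vartheta\,dx\,dy$. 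Under Lemma \ref{lem4.1} one has $E\ge\frac{p-2}{2p}\big(\lambda\|u\|^2_{H^2_*}+\kappa\int_\Omega\int_{\mathbb{R}}|\phi|^2+\int_0^\infty g\|\mu\|^2_{H^2_*}\big)$. Hence the term is bounded by $CE(t)$ with $C$ independent of $u_0$. Since the term is nonnegative, it does not affect the lower bound. The paper also leaves \eqref{equ5.4} unproved (calling it ``obvious''), so this omission is shared, but the justification you offer is not the right one.
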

\begin{proof}
We have
\begin{eqnarray*}
\mathcal{L}^{\prime}(t)&=& N E^{\prime}(t) + N_{1}\psi^{\prime}_{1}(t) + N_{2}\psi^{\prime}_{2}(t)\nonumber\\
&\le& -\kappa \left(N-\frac{N_2\kappa}{2}\right) \int_{\Omega}\int_{-\infty}^{+\infty}
        (\vartheta^{2}+\beta)|\phi(x,y,\vartheta,t)|^2 d\vartheta dx dy \nonumber\\
&-&\left(\frac{N_2(1-\lambda)}{2}- N_{1}\right)\|u_{t}\|_{2}^{2}- \left(\frac{N_{1}\lambda}{2}-N_2\Big(\frac{\lambda \delta_1}{2}+\frac{\delta C_e }{2}\left[\frac{2 p E(0)}{\lambda(p-2) }\right]^{p-2}\Big)\right)|u\|_{H_{*}^{2}(\Omega)}^{2}\nonumber\\
&-&\left(\frac{Nc_1}{2}-\frac{N_1 (1-\lambda)}{2\lambda}-N_2 (1-\lambda)\left(1+\frac{C_e c_0^2 }{2(1-\lambda)}+\frac{C_e}{2\delta}+\frac{\lambda}{2\delta_1}+\frac{AC_e}{2}\right) \right)\int_{0}^{+\infty} g(s)\|\mu(s)\|_{H_{*}^{2}(\Omega)}^{2} ds\nonumber\\
&-&  N_1\kappa\int_{\Omega}\int_{-\infty}^{+\infty}
        |\phi(x,y,\vartheta,t)|^2 d\vartheta dx dy +N_{1}\|u\|_{p}^{p}.
\end{eqnarray*}
Now, we fix $N_1=1$, $\delta_{1} = \frac{1}{4N_{2}}$ and $\delta = \frac{\lambda}{8N_{2}}\left(\frac{1}{\frac{C_{e}}{2}\left( \frac{2p}{\lambda(p-2)}E(0) \right)^{p-2}}\right)$. With these choices, the term $\frac{N_{1}\lambda}{2}-N_2\Big(\frac{\lambda \delta_1}{2}+\frac{\delta C_e }{2}\left[\frac{2 p E(0)}{\lambda(p-2) }\right]^{p-2}\Big)$ is equal to $\frac{\lambda}{4}$.\\
After, we choose $N_{2}$  such that
$$ N_{2}> \frac{2}{1-\lambda}.$$
Finally, we take $N$ large enough such that \eqref{equ5.4} holds true and
$$N>\frac{N_2\kappa}{2},$$
and
$$N>\frac{2 (1-\lambda)}{2\lambda c_1}+\frac{2N_2 (1-\lambda)}{c_1}\left(1+\frac{C_e c_0^2 }{2(1-\lambda)}+\frac{2N_2C^2_e}{\lambda} \left(\frac{2p}{\lambda(p-2)}E(0) \right)^{p-2}+2\lambda N_2+\frac{AC_e}{2}\right).$$
Thus, it holds that
\begin{eqnarray*}
\mathcal{L}^{\prime}(t)\leq -\alpha_3 E(t).
\end{eqnarray*}
Using \eqref{equ5.4} and integrating over $(0,t)$ the last inequality, we get
$$\mathcal{L}(t)\leq \alpha_5 \mathcal{L}(0) e^{-\zeta t},$$
which gives, by using again \eqref{equ5.4}, the estimate \eqref{equ5.14}.
\end{proof}
\section{Blow Up}\label{sec_blowup}
In this section, we will prove that the solution with negative initial energy blows-up in a finite time.
\begin{lemma}\label{le5.1}
There exists $C_1>1$ such that
\begin{equation}
\|u\|_{p}^{s} \leq C_1\left(\|u\|_{p}^{p}+\|u\|^2_{H^2_*(\Omega)}\right)
\end{equation}
\end{lemma}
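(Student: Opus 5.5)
This is the standard interpolation lemma from blow-up arguments of Georgiev--Todorova type. Here $s$ ranges over $2\le s\le p$, the range needed later in the blow-up argument. I would split into two cases according to whether $\|u\|_p\le 1$ or $\|u\|_p>1$.

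\emph{Case $\|u\|_p\le 1$.} Since $s\ge 2$, we have $\|u\|_p^s\le \|u\|_p^2$. Applying Lemma \ref{lem2.2} with $r=p$ gives
\[
\|u\|_p\le C_e^{1/p}\|u\|_{H^2_*(\Omega)},
\]
so that
\[
\|u\|_p^s\le \|u\|_p^2\le C_e^{2/p}\|u\|^2_{H^2_*(\Omega)}.
\]

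\emph{Case $\|u\|_p>1$.} Since $s\le p$, we directly have $\|u\|_p^s\le \|u\|_p^p$.

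Combining the two cases, the claim holds with
\[
C_1=\max\{2,\,C_e^{2/p}\}>1,
\]
where the constant is deliberately taken larger than $1$, as the statement requires. This constant depends only on $\Omega$ and $p$ through $C_e=C_e(\Omega,p)$.

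There is essentially no obstacle here. The only points to settle are the admissible range of $s$ and the fact that the embedding constant from Lemma \ref{lem2.2} is applied with exponent $2/p$. If instead the later argument needs $s$ slightly below $2$, for instance $s=2(1-\varepsilon)$, the same splitting would require a Young-type step instead. I would flag this, but I expect $2\le s\le p$ to be the intended range.
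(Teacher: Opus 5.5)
Your two-case argument ($\|u\|_p\le 1$ via the embedding of Lemma~\ref{lem2.2} with $r=p$; $\|u\|_p>1$ via $s\le p$) is correct. It is the standard Georgiev--Todorova-type proof that the paper relies on implicitly, since the paper states this lemma without proof. Your constant $C_1=\max\{2,C_e^{2/p}\}$ is independent of $s$, and the range $2\le s\le p$ is exactly what the blow-up proof uses, namely $s=p\gamma+2$ and $s=2/(1-2\gamma)$ with $0<\gamma<\frac{p-2}{2p}$, so your caveat about $s<2$ never arises.
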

for any $u \in H_{*}^{2}(\Omega)$ and $2 \leq s \leq p$.\\
The main result of this section reads as follows.
\begin{theorem}\label{teo6.2}
Assume that (H1)-(H2), $E(0)< 0$ and
\begin{equation}\label{eq5.3}
\displaystyle\int_{0}^{\infty} g(s) d s<\frac{p-2}{p},
\end{equation}
then the solution of system \eqref{eq2.5} blows-up in a finite time.
\end{theorem}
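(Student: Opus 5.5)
We will follow the standard Georgiev--Todorova / Messaoudi concavity-type argument. Set $H(t)=-E(t)$. By \eqref{eq2.7}, $H'(t)\ge 0$, so $0<H(0)\le H(t)$. Moreover, from \eqref{eq2.6} we have
\[
H(t)\le \frac{1}{p}\|u\|_p^p .
\]
We then introduce
\[
L(t)=H^{1-\eta}(t)+\varepsilon\int_\Omega u\,u_t\,dx\,dy+\frac{\varepsilon\kappa}{2}\int_\Omega\int_{-\infty}^{+\infty}(\vartheta^2+\beta)\Big(\int_0^t\phi\,ds+\frac{u_0\xi(\vartheta)}{\vartheta^2+\beta}\Big)^2 d\vartheta\,dx\,dy ,
\]
with $0<\eta\le \frac{p-2}{2p}$ small and $\varepsilon>0$ small. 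The last term (the same one appearing in $\psi_1$) is added so that, by Lemma \ref{lemimpo}, its derivative cancels the fractional coupling term $-\kappa\int_\Omega u\int\phi\xi\,d\vartheta$ arising from differentiating $\int u u_t$. This is exactly the computation \eqref{psi1}, which gives
\[
\frac{d}{dt}(\cdots)=\|u_t\|^2-\lambda\|u\|_{H^2_*}^2-\int_0^\infty g(s)(\mu(s),u)_{H^2_*}ds-\kappa\int\!\!\int|\phi|^2+\|u\|_p^p .
\]

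The key step is to bound $L'$ from below. We write $\|u\|_p^p=p(1-a)H+(1-a)\cdot(\ldots)+a\|u\|_p^p$ for a parameter $a\in(0,1)$, and substitute the definition of $H$ to produce positive multiples of $\|u_t\|^2$, $\|u\|_{H^2_*}^2$, $\int g\|\mu\|^2$ and $\kappa\int\!\int|\phi|^2$. The memory cross term is handled by Young's inequality with a weight $\tau$:
\[
\Big|\int_0^\infty g(\mu,u)_{H^2_*}\Big|\le \frac{1-\lambda}{4\tau}\|u\|^2_{H^2_*}+\tau\int g\|\mu\|^2 .
\]
We then need the coefficient of $\|u\|^2_{H^2_*}$, namely $\frac{p(1-a)}{2}\lambda-\lambda-\frac{1-\lambda}{4\tau}$, and that of $\int g\|\mu\|^2$, namely $\frac{p(1-a)}{2}-\tau$, to be positive for suitable $\tau$ and small $a$. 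This is exactly where hypothesis \eqref{eq5.3}, $1-\lambda<\frac{p-2}{p}$, i.e.\ $\lambda>\frac 2p$, is used: it makes $\frac{(p-2)\lambda}{2}-\frac{1-\lambda}{4\tau}>0$ possible with $\tau<\frac p2$. Any remaining cross terms carrying $\int g\|\mu\|^2$ that still need absorption are controlled by $H'(t)\ge \frac{c_1}{2}\int g\|\mu\|^2+\kappa\int\!\int(\vartheta^2+\beta)|\phi|^2$ from (H2) and \eqref{eq2.7}, choosing $\varepsilon$ small. We expect this balancing of constants to be the main obstacle. The result should be
\[
L'(t)\ge \gamma\varepsilon\big(H+\|u_t\|^2+\|u\|^2_{H^2_*}+\|u\|_p^p\big)
\]
for some $\gamma>0$, and $L(0)>0$ after shrinking $\varepsilon$.

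Next we show $L^{1/(1-\eta)}\le C(H+\|u_t\|^2+\|u\|^2_{H^2_*}+\|u\|_p^p)$. For $|\int uu_t|^{1/(1-\eta)}\le \|u\|_2^{1/(1-\eta)}\|u_t\|^{1/(1-\eta)}$, Young's inequality with exponents giving $\|u\|_2^{2/(1-2\eta)}$ and the embedding $\|u\|_2\le C\|u\|_p$, followed by Lemma \ref{le5.1} with $s=2/(1-2\eta)\le p$, bounds it by $C(\|u\|_p^p+\|u\|^2_{H^2_*})$. The $\phi$-history term raised to $1/(1-\eta)$ is the delicate part. Following \cite{aslam25}, we will bound it through $\int_0^t\int\!\int(\vartheta^2+\beta)|\phi|^2\le (H(t)-H(0))/\kappa$ together with Cauchy--Schwarz in time. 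If that fails to be homogeneous, we instead drop this term from $L$ and estimate the coupling term directly by Young's inequality against $\kappa\int\!\int(\vartheta^2+\beta)|\phi|^2\le H'$ and $\|u\|^2$, using $M=\int\xi^2/(\vartheta^2+\beta)$ and $\|u\|^2\le C\|u\|_p^2\le C H^{-\eta'}\|u\|_p^p\cdots$, in the spirit of \cite{Benaissa1}. Then $L'\ge cL^{1/(1-\eta)}$, and integrating gives
\[
L^{\eta/(1-\eta)}(t)\ge \frac{1}{L^{-\eta/(1-\eta)}(0)-c\eta t/(1-\eta)},
\]
which blows up before $T^*=\frac{1-\eta}{c\eta}L^{-\eta/(1-\eta)}(0)$.
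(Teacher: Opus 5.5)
Your leading construction does not close, and the step that fails is not the coefficient balancing you flag. That part is routine, and it is where \eqref{eq5.3} enters, exactly as in the paper. The failing step is the bound $L^{1/(1-\eta)}\le C(H+\|u_t\|_2^2+\|u\|_{H^2_*}^2+\|u\|_p^p)$ once the $\phi$-history term sits inside $L$. Integrating the $\phi$-equation in time gives $(\vartheta^2+\beta)\int_0^t\phi\,ds=(u(t)-u_0)\xi-\phi(t)$, so the term you add to $L$ is exactly
\[
\frac{\varepsilon\kappa}{2}\int_\Omega\int_{-\infty}^{+\infty}\frac{\big(u(t)\xi(\vartheta)-\phi(t)\big)^2}{\vartheta^2+\beta}\,d\vartheta\,dx\,dy .
\]
Its $u$-part is harmless: it is at most $M\|u\|_2^2$, and $\|u\|_2^{2/(1-\eta)}$ is handled by Lemma~\ref{le5.1}. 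Its $\phi$-part, however, forces you to bound $\big(\int_\Omega\int|\phi|^2\big)^{1/(1-\eta)}$ by quantities that appear only linearly in $L'$. The only pointwise-in-time controls available are:
\begin{itemize}
\item $\frac{\kappa}{2}\int_\Omega\int|\phi|^2\le\frac1p\|u\|_p^p$, which gives $\|u\|_p^{p/(1-\eta)}$, an exponent above $p$ that Lemma~\ref{le5.1} cannot absorb;
\item $\kappa\beta\int_\Omega\int|\phi|^2\le H'$, which gives $(H')^{1/(1-\eta)}$, not dominated by $H^{-\eta}H'$.
\end{itemize}
Cauchy--Schwarz in time is no better. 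The bound $\int_\Omega\int(\vartheta^2+\beta)\big(\int_0^t\phi\,ds\big)^2\le\frac{t}{\kappa}(H(t)-H(0))$ carries a factor $t$. Even on a bounded time window it is of order $H$ rather than $H^{1-\eta}$, so its $\frac{1}{1-\eta}$-th power is superlinear in $H$. Hence $L'\ge cL^{1/(1-\eta)}$ cannot be obtained with this $L$.

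Your fallback is precisely the paper's proof, and it works, but you must state its decisive idea. After dropping the extra term, bound
\[
\varepsilon\kappa\int_\Omega u\int\phi\xi\,d\vartheta\,dx\,dy\le\varepsilon\delta\kappa M\|u\|_2^2+\frac{\varepsilon}{4\delta}H'
\]
with a weight tied to the solution, $\frac{1}{4\delta}=kH^{-\eta}(t)$ (the paper's choice). Then the $H'$ contribution merges with $(1-\eta)H^{-\eta}H'$ into $[(1-\eta)-\varepsilon k]H^{-\eta}H'\ge0$. A constant $\delta$ would not work, since $H^{-\eta}(t)$ decreases.

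For the same reason, your remark about absorbing leftover multiples of $\int g\|\mu\|^2_{H^2_*}$ into $H'$ by taking $\varepsilon$ small is not available. Fortunately it is not needed: with $\tau\le\frac{p(1-a)}{2}$ nothing is left over.

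The price of the $H$-dependent weight is the term $\frac{\varepsilon\kappa M}{4k}H^{\eta}\|u\|_2^2$. The paper controls it via $H^\eta\le p^{-\eta}\|u\|_p^{p\eta}$ and Lemma~\ref{le5.1} with $s=p\eta+2$. Your bound $\|u\|_p^2\le(pH)^{-(p-2)/p}\|u\|_p^p$, together with $H\ge H(0)$ and $\eta<\frac{p-2}{p}$, gives $C\,H(0)^{\eta-\frac{p-2}{p}}\|u\|_p^p$ directly. Either way it is absorbed for $k$ large, after which you take $\varepsilon$ small.

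Your memory estimate is the paper's with the fixed choice $\tau=\frac14$ replaced by a free $\tau$. The choice $\tau=\frac14$ is what produces exactly $\lambda>\frac2p$, i.e.\ \eqref{eq5.3}. Taking $\tau$ near $\frac p2$ would in fact only require $\lambda(p-1)^2>1$, a weaker hypothesis.
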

\begin{proof}
Let
\begin{equation*}
H(t)= -E(t).
\end{equation*}
By \eqref{eq2.7}, we have
\begin{eqnarray}\label{eq49}
H^{\prime}(t)&=&-E^{\prime}(t)\nonumber\\
&=& -\frac{1}{2} \int_0^{+\infty} g'(s) ||\mu(s)||^2_{H^2_{*}(\Omega)} ds\nonumber\\
&&\hspace{3cm}+\kappa \int_\Omega \int_{-\infty}^{+\infty} (\vartheta^2 + \beta) |\phi(x,y,\vartheta,t)|^2 d\vartheta dx dy \geq 0.
\end{eqnarray}
Thus, we get
\begin{equation}\label{eq5.6}
0 < H(0) \leq H(t) \leq \frac {1}{p}\|u\|_{p}^{p}.
\end{equation}
Let
\begin{equation} \label{eq51}
B(t) = H^{1-\gamma} (t) + \varepsilon \int_\Omega u u_t dx dy,
\end{equation}
where $\varepsilon > 0$ will be specified afterwards, and $\gamma$ is a positive constant that satisfies
\begin{equation}
0 < \gamma < \frac{p-2}{2p}.
\end{equation}
Differentiating (\ref{eq51}) and using \eqref{eq2.5}, we get
\begin{equation}
\begin{split}
B^{\prime}(t)
& =  \varepsilon  ||u_t||_2^2 - \varepsilon \lambda ||u||^2_{H^2_{*}(\Omega)}+\varepsilon\|u\|_{p}^{p}
\\
&\quad +(1 - \gamma) H^{-\gamma}(t) H'(t) -\kappa \varepsilon \int_\Omega u \int_{-\infty}^{+\infty}  \phi(x,y,\vartheta,t)\xi(\vartheta) d\vartheta dx dy
\\
&\quad -\varepsilon \displaystyle\int_{0}^{\infty} g(s) (u, \mu(s))_{H^2_{*}(\Omega)} ds.
\end{split}
\label{eq53}
\end{equation}
As in \eqref{psi11}, we infer that
\begin{equation}\label{eq55}
\int_{0}^{+\infty} g(s) ( \mu(s), u)_{H_*^{2}(\Omega)}ds\le (1-\lambda)\|u\|^2_{H^2_*(\Omega)}
  + \frac{1}{4}\int_{0}^{+\infty} g(s)\,\|\mu(s)\|_{H_*^{2}(\Omega)}^{2}\,ds.
\end{equation}
Substituting (\ref{eq55}) in (\ref{eq53}), we get
\begin{equation}
\begin{split}
B^{\prime}(t)
& \geq (1-\gamma) H^{-\gamma} (t)  H' (t)+ \varepsilon ||u_t||_2^2 -\varepsilon ||u||^2_{H_*^{2}(\Omega)}+\varepsilon\|u\|_{p}^{p}
\\
&\quad -\kappa\varepsilon  \int_\Omega u \int_{-\infty}^{+\infty}  \phi(x,y,\vartheta,t)\xi (\vartheta) d\vartheta dx dy
\\
&\quad -\frac{\varepsilon}{4} \int_{0}^{+\infty} g(s) ||\mu(s)||^2_{H^2_{*}(\Omega)} ds.
\end{split}
\label{eq56}
\end{equation}
By the help of (\ref{eq49}) and Young's inequality, we have, for any $\delta > 0$,
\begin{equation}
\begin{split}
\kappa
& \int_\Omega u \int_{-\infty}^{+\infty}  \phi(x,y,\vartheta,t)\xi (\vartheta) d\vartheta dx dy
\\
&\quad \leq \delta \kappa M ||u||_2^2 + \frac{\kappa}{4\delta}  \int_\Omega \int_{-\infty}^{+\infty} (\vartheta^2 + \beta) |\phi (x,y,\vartheta,t)|^2 d\vartheta dx dy
\\
&\quad \leq \delta \kappa M ||u||_2^2 + \frac{1}{4\delta} H'(t),
\end{split}
\label{eq57}
\end{equation}
Inserting (\ref{eq57}) in (\ref{eq56}), we have
\begin{equation}
\begin{split}
B^{\prime}(t)
& \geq \biggl( (1-\gamma) H^{-\gamma}(t) - \frac{\varepsilon}{4\delta} \biggl)  H' (t)
\\
&\quad + \varepsilon ||u_t||_2^2 - \varepsilon ||u||^2_{H^2_{*}(\Omega)} - \varepsilon \delta \kappa M ||u||_2^2
\\
&\quad - \frac{\varepsilon}{4} \int_{0}^{+\infty} g(s) ||\mu(s)||^2_{H^2_{*}(\Omega)} ds+\varepsilon\|u\|_{p}^{p}.
\end{split}
\label{eq58}
\end{equation}
Next, by selecting $\delta$ such that
\begin{equation}
    \frac{1}{4\delta} = k H^{-\gamma}(t),
    \label{eq59}
\end{equation}
where $k$ is a positive constant to be determined later, (\ref{eq58}) becomes
\begin{equation}
\begin{split}
B^{\prime}(t)
& \geq \bigr[ (1-\gamma) - \varepsilon k \bigr] H^{-\gamma}(t)H'(t) - \varepsilon ||u||^2_{H^2_{*}(\Omega)} - \frac{\varepsilon \kappa M}{4k} H^{\gamma}(t)||u||_2^2\\
&\quad + \varepsilon ||u_t||_2^2+\varepsilon\|u\|_{p}^{p} - \frac{\varepsilon}{4} \int_{0}^{+\infty} g(s) ||\mu(s)||^2_{H^2_{*}(\Omega)} ds.
\label{eq60}
\end{split}
\end{equation}
Using \eqref{eq5.6}, we have
\begin{equation*}
H^{\gamma}(t) \leq \frac{1}{p^{\gamma}}\|u\|_{p}^{p \gamma},
\end{equation*}
which implies that
\begin{equation}\label{eq6Z}
\kappa M H^{\gamma}(t)\|u\|_{2}^{2} \leq C_{2}\|u\|_{p}^{p \gamma+2}
\end{equation}
for some $C_{2}>0$. Putting \eqref{eq6Z} in \eqref{eq60} and using Lemma \ref{le5.1} with $2\leq s=p \gamma+2 \leq p$, we obtain, for any $0<m<1$,
\begin{equation}
\begin{aligned}
B^{\prime}(t) \geq & ((1-\gamma)-\varepsilon k) H^{-\gamma}(t) H^{\prime}(t)+\varepsilon\frac{p(1-m)+2}{2}\left\|u_{t}\right\|_{2}^{2} \\
& +\varepsilon\left(m-\frac{C_{3}}{4 k}\right)\|u\|_{p}^{p}+\varepsilon\left(\frac{p(1-m)\lambda -2}{2}-\frac{C_{3}}{4k}\right)\|u\|^{2}_{H^2_{*}(\Omega)} \\
& +\varepsilon\frac{\kappa p(1-m)}{2}  \int_{\Omega} \int_{-\infty}^{+\infty}|\phi(x,y,\vartheta,t)|^{2} d\vartheta dx dy+\varepsilon p(1-m)H(t)\\
& +\varepsilon\frac{2p(1-m)-1}{4}\int_{0}^{+\infty} g(s)\left\|\mu(s)\right\|^{2}_{H^2_{*}(\Omega)} ds
\end{aligned}
\end{equation}
where $C_{3}=C_1 C_{2}$. \\
First, we choose $0<m<1$ such that
$$m<\min\Big\{1-\frac{1}{2p}, 1-\frac{2}{p\lambda} \Big\}.$$
This choice is justified by the assumption $p>2$ together with \eqref{eq5.3}.

Next, we select $k$ sufficiently large such that
$$m-\frac{C_{3}}{4 k}>0,\;\text{and}\;\; \frac{p(1-m)\lambda -2}{2}-\frac{C_{3}}{4k}>0.$$
Finally, we pick up $\varepsilon$ small enough so that
$$(1-\gamma) - \varepsilon k > 0,\;\;\text{and}\;\;H^{1-\gamma}(0)+\varepsilon \int_{\Omega} u_{0} u_{1} dx dy>0.
$$
Consequently, there exists a positive constant $C_4$ such that
\begin{align}\label{eq5.21}
B^{\prime}(t)&\geq C_4\left\{H(t)+\left\|u_{t}\right\|_{2}^{2}+\|u\|^{2}_{H^2_{*}(\Omega)}+\|u\|_{p}^{p}\right.\nonumber \\
&\quad\;\;\;\;\left.+\int_{\Omega} \int_{-\infty}^{+\infty}|\phi(x,y,\vartheta,t)|^{2} d\vartheta dx dy+\int_{0}^{+\infty} g(s)\left\|\mu(s)\right\|^2_{H^2_{*}(\Omega)} ds\right\}.
\end{align}
and
\begin{equation}\label{GrindEQ__66_}
B(t)\geq B(0)>0,\;\forall\;t>0.
\end{equation}
On the other hand, it is easy to see that
\begin{equation}\label{GrindEQ__67_}
\begin{aligned}
B^{\frac{1}{1-\gamma}}(t)= & \left[H^{1-\gamma}(t)+\varepsilon \int_{\Omega} u u_{t} dx dy \right]^{\frac{1}{1-\gamma}}
\end{aligned}
\end{equation}
\begin{equation}
    \leq C_5\Big[H(t)+\left|\int_{\Omega} u u_{t} dx dy\right|^{\frac{1}{1-\gamma}}\Big]
    \label{eq68}
\end{equation}
for some positive constant $C_5$.\\
Next, employing H\"{o}lder's inequality to obtain
\begin{equation*}\label{eq69}
\left| \int_{\Omega} u u_{t} dx dy \right| \leq\|u\|_{2} \left\|u_{t}\right\|_{2} \leq C_6 \|u\|_{p} \left\|u_{t}\right\|_{2}.
\end{equation*}
Therefore, by using Young's inequality and Lemma \ref{le5.1} with $2<s=\frac{2}{1-2\gamma}<p$, we have
\begin{eqnarray}\label{eq699}
\left|\int_{\Omega} u u_{t} dx dy\right|^{\frac{1}{1-\gamma}} &\leq& C^{\frac{1}{1-\gamma}}_6\|u\|_{p}^{\frac{1}{1-\gamma}} \left\|u_{t}\right\|_{2}^{\frac{1}{1-\gamma}} \nonumber\\
&\leq& C_7\left[\|u\|_{p}^{\frac{2}{1-2\gamma}}+\left\|u_{t}\right\|_{2}^2\right]\nonumber\\
&\leq& C_8 \left[ \|u\|^2_{H^2_*(\Omega)}+\|u\|_{p}^{p}+\left\|u_{t}\right\|_{2}^{2}\right].
\end{eqnarray}
where $C_6, C_7, C_8>0$. Putting \eqref{eq699} in (\ref{eq68}), we get the existence of a positive constant $C_9$ such that
\begin{equation}\label{eq72}
B^{\frac{1}{1-\gamma}}(t) \leq C_9\left[H(t)+\|u\|_{H^2_*(\Omega)}^{2}+\|u\|_{p}^{p}+\|u_{t}\|_{2}^{2}\right].
\end{equation}
From \eqref{eq5.21} and (\ref{eq72}), we deduce that
\begin{equation}
B^{\prime}(t) \geq \varpi B^{\frac{1}{1-\gamma}}(t),\;\forall\;t\geq 0,
\label{eq73}
\end{equation}
for some positive constant $\varpi$.
A simple integration of (\ref{eq73}) gives
$$
B^{\frac{\gamma}{1-\gamma}}(t) \geq \frac{1}{B^{\frac{-\gamma}{1-\gamma}}(0)-\varpi \frac{\gamma}{(1-\gamma)} t}.
$$
Consequently, $B(t)$ blows-up in finite time
$$
T \leq T^{*}=\frac{1-\gamma}{\varpi \gamma B^{\frac{\gamma}{1-\gamma}(0)}}.
$$
This completes the proof.
\end{proof}
\section{Numerical Experiments}
\label{sec7}
\subsection{Preliminaries}
In this section we present numerical experiments illustrating the theoretical
results established in the previous sections. In particular, the simulations are
designed to exhibit the two main dynamical regimes predicted by the analysis:
exponential decay of solutions under suitable assumptions on the initial data,
and finite-time blow-up when the initial energy is negative.

To approximate the model, we employ an SBP--SAT finite-difference
discretization in space combined with a Newmark time-integration scheme.
This choice is motivated by the need to preserve the structural properties
of the continuous energy framework while providing a stable discretization
of the fourth-order operator.

The spatial domain is discretized as follows. The interval $[0,\pi]$ in the
$x$-direction is divided into $J$ uniform subintervals of length $\Delta x$,
while the interval $[-d,d]$ in the $y$-direction is divided into $K$
subintervals of length $\Delta y$. Hence, the domain $\Omega$ is partitioned
into rectangles of area $\Delta x\,\Delta y$. We write
\[
x_j=j\Delta x,\qquad j=0,1,\dots,J,
\]
and
\[
y_k=-d+k\Delta y,\qquad k=0,1,\dots,K.
\]
For the time variable, we integrate from $t=0$ up to a final time
$t=T$ using $N$ time steps of size $\Delta t=T/N$. The numerical
solution is denoted by $U_{j,k}^n\approx u(x_j,y_k,t_n)$. Accordingly, we write
\[
\boldsymbol{U}^n \in \mathbb{R}^{(J+1)(K+1)},\qquad
\boldsymbol{U}^n=
[\boldsymbol{U}_0^n\; \boldsymbol{U}_1^n\; \dots\; \boldsymbol{U}_{K}^n]^\top,
\]
where, for each fixed $k$,
\begin{equation}\label{vecU}
\boldsymbol{U}_k^n=
[U_{0,k}^n\; U_{1,k}^n\; \dots\; U_{J,k}^n]
\in\mathbb{R}^{J+1}.
\end{equation}
Thus, $\boldsymbol{U}_k^n$ contains the nodal values along the $x$-direction
at the fixed $y$-level $y_k$. In what follows, $\boldsymbol{U}^n$ is treated
as a column vector.
\subsection{Integration over time.}
The Newmark method \cite{krenk} is designed for problems of the form
\begin{equation}\label{prob_lineal}
\boldsymbol{M}\boldsymbol{\ddot{u}}(t) + \boldsymbol{C}\boldsymbol{\dot{u}}(t)+\boldsymbol{K}\boldsymbol{u}(t)=\boldsymbol{f}(t),
\end{equation}
where $\boldsymbol{M},\: \boldsymbol{C}$ and $\boldsymbol{K}$ are the mass, damping and stiffness matrices respectively, $\boldsymbol{f}(t)$ is an external load vector, and $\boldsymbol{u}(t)$ is the displacement. Writing $\boldsymbol{u}^n = \boldsymbol{u}(t_n)$ and so on for other vectors, the Newmark algorithm states
\begin{equation}\label{newmark}
\begin{cases}
&\left(\boldsymbol{M} + \gamma \Delta t \boldsymbol{C} + \beta\Delta t^2 \boldsymbol{K} \right)\boldsymbol{\ddot{u}}^{n+1} = \boldsymbol{f}^{n+1} - \boldsymbol{C}\left(\boldsymbol{\dot{u}}^n + (1-\gamma)\Delta t \boldsymbol{\ddot{u}}^n \right) \\ & \qquad \qquad \qquad \qquad \qquad \qquad \qquad  - \boldsymbol{K}\left(\boldsymbol{u}^n + \Delta t \boldsymbol{\dot{u}}^n + \left(\dfrac12 - \beta\right)\Delta t^2 \boldsymbol{\ddot{u}}^n \right), \\
&\boldsymbol{\dot{u}}^{n+1} = \boldsymbol{\dot{u}}^n + (1-\gamma)\Delta t  \boldsymbol{\ddot{u}}^n + \gamma \Delta t \boldsymbol{\ddot{u}}^{n+1}, \\
&\boldsymbol{u}^{n+1} = \boldsymbol{u}^n+\Delta t \boldsymbol{\dot{u}}^n+ \left(\dfrac12 - \beta\right)\Delta t^2 \boldsymbol{\ddot{u}}^n + \beta\Delta t^2 \boldsymbol{\ddot{u}}^{n+1},
\end{cases}
\end{equation}
where $\beta$ and $\gamma$ are real parameters introducing a bias in the solution. As we need a conservative scheme, we will choose $\gamma = \dfrac12$ and $\beta = \dfrac14$. From \eqref{eq2.5}, it is natural to choose $\boldsymbol{M}$ as the identity matrix. The remaining matrices $\boldsymbol{C}$, $\boldsymbol{K}$ and vector $\boldsymbol{f}$ will be defined in the following subsections. As $\boldsymbol{C}$ contains both the fractional derivative and memory terms, we will decompose it as $\boldsymbol{C} = \boldsymbol{C_1} + \boldsymbol{C_2}$, for $\boldsymbol{C_1}$ the fractional derivative effect and $\boldsymbol{C_2}$ the infinite memory one.

\subsection{Discretization of the bilaplacian.}
Let us focus on the discretization of the bilaplacian. This will eventually lead to the definition of $\boldsymbol{K}$ in \eqref{newmark}. We will use a second-order Summation-By-Parts (SBP) finite-difference operator in the $y$--direction together with Simultaneous
Approximation Terms (SAT) to weakly impose the boundary conditions at $y=\pm d$.  We assemble the discrete operator starting from the continuous bilinear form
associated with the plate energy (see the definition of $H_*^2$ and the inner
product in the preliminaries). For sufficiently smooth $u,v$,
\[
a(u,v)=\int_{\Omega}\Big(
u_{xx}v_{xx}+u_{yy}v_{yy}
+\sigma(u_{xx}v_{yy}+u_{yy}v_{xx})
+2(1-\sigma)u_{xy}v_{xy}
\Big)\,dx\,dy.
\]
This formulation is intrinsically two-dimensional and produces the natural
boundary conditions at $y=\pm d$ upon integration by parts; in particular, the
terms involving $2(1-\sigma)u_{xy}v_{xy}$ are essential to recover the correct
boundary fluxes. This construction preserves (in the discrete sense) the self-adjoint/energy structure of the continuous operator and avoids the non-symmetric boundary closures produced by ghost-node elimination. SBP--SAT methods and their energy stability properties are described in \cite{Fernandez2014review,Carpenter1999}, and SBP operators for second derivatives are developed in \cite{Mattsson2004}. For plate/biharmonic-type operators with free/clamped boundaries using SBP--SAT, see e.g. \cite{Tazhimbetov2023}. \\

\medskip
\noindent We impose the essential boundary conditions $u(0,y,t)=u(\pi,y,t)=0$ and work with the interior $x$-nodes $j=1,\dots,J-1$, so that $\boldsymbol U^n \in \mathbb{R}^{(J-1)(K+1)}$. The global vector is ordered by $y$-layers, i.e.
\[
\boldsymbol U^n
=
\big[
\boldsymbol U^n_{(\cdot,0)}\; ;\;
\boldsymbol U^n_{(\cdot,1)}\; ;\;\dots;\;
\boldsymbol U^n_{(\cdot,K)}
\big],
\quad
\boldsymbol U^n_{(\cdot,k)}
=
\big(U^n_{1,k},\dots,U^n_{J-1,k}\big)^\top\in\mathbb{R}^{J-1}.
\]
We will denote by $\boldsymbol{I}_x\in \mathbb{R}^{(J-1)\times (J-1)}$ and $\boldsymbol{I}_y\in \mathbb{R}^{(K+1)\times (K+1)}$ as identity matrices in their respective dimensions. Observe that Kronecker products of the form $\boldsymbol I_y\otimes(\cdot)$ act on
the $x$-direction within each $y$-layer.

\medskip
\noindent On the derivatives for the $x$ variable, we recall the centered first-derivative matrix
\[
\boldsymbol D_{1,x}
=
\frac{1}{\Delta x}
\begin{pmatrix}
-1 &  1 &        &        &        &  \\
-\tfrac12 & 0 & \tfrac12 &        &        &  \\
        & -\tfrac12 & 0 & \tfrac12 &        &  \\
        &        & \ddots & \ddots & \ddots &  \\
        &        &        & -\tfrac12 & 0 & \tfrac12 \\
        &        &        &        & -1 & 1
\end{pmatrix} \in \mathbb{R}^{(J-1)\times(J-1)},
\]
and centered second-derivative matrix on the interior nodes
$j=1,\dots,J-1$:
\[
\boldsymbol D_{x,1D}^2
=
\frac{1}{\Delta x^2}
\begin{pmatrix}
-2 & 1  &        &        &  \\
 1 & -2 & 1      &        &  \\
   & \ddots & \ddots & \ddots & \\
   &        & 1      & -2     & 1\\
   &        &        & 1      & -2
\end{pmatrix}
\in\mathbb{R}^{(J-1)\times(J-1)}.
\]
The corresponding 2D operator is
\[
\boldsymbol D_{xx} := \boldsymbol I_y\otimes \boldsymbol D_{x,1D}^2
\in\mathbb{R}^{(J-1)(K+1)\times(J-1)(K+1)}.
\]
Defining the SBP norm (trapezoidal rule)
\[
\boldsymbol H_y
=
\Delta y\,\mathrm{diag}\!\left(\tfrac12,1,\dots,1,\tfrac12\right)
\in\mathbb{R}^{(K+1)\times(K+1)}.
\]
along with $\boldsymbol H_x=\Delta x\,\boldsymbol I_x$ and $\boldsymbol{H} := \boldsymbol H_y \otimes \boldsymbol H_x $, then the corresponding 2D operator for the fourth derivative is
\[
\boldsymbol D_{xxxx} := \boldsymbol D_{xx}^\top \boldsymbol H \boldsymbol D_{xx}.
\]
On the SBP operator in $y$, let $\boldsymbol e_0=(1,0,\dots,0)^\top$ and $\boldsymbol e_K=(0,\dots,0,1)^\top$ denote the canonical vectors extracting the boundary values at $y=-d$ and $y=d$. We also set $\boldsymbol E_0=\boldsymbol e_0\boldsymbol e_0^\top$ and
$\boldsymbol E_K=\boldsymbol e_K\boldsymbol e_K^\top$. We take the second-order SBP first-derivative matrix
$\boldsymbol D_{1,y}\in\mathbb{R}^{(K+1)\times(K+1)}$ defined by centered
differences in the interior and one-sided differences at the boundaries:
\[
\boldsymbol D_{1,y}
=
\frac{1}{\Delta y}
\begin{pmatrix}
-1 &  1 &        &        &        &  \\
-\tfrac12 & 0 & \tfrac12 &        &        &  \\
        & -\tfrac12 & 0 & \tfrac12 &        &  \\
        &        & \ddots & \ddots & \ddots &  \\
        &        &        & -\tfrac12 & 0 & \tfrac12 \\
        &        &        &        & -1 & 1
\end{pmatrix}.
\]
The pair $(\boldsymbol H_y,\boldsymbol D_{1,y})$ satisfies the SBP identity
\[
\boldsymbol H_y\boldsymbol D_{1,y}
+
(\boldsymbol H_y\boldsymbol D_{1,y})^\top
=
\boldsymbol E_K-\boldsymbol E_0,
\]
which is the discrete analogue of integration by parts
\cite{Fernandez2014review,Carpenter1999}. For the second derivative, we use a second-order accurate matrix $\boldsymbol D_{2,y}\in\mathbb{R}^{(K+1)\times(K+1)}$ with centered interior
stencil and second-order one-sided closures:
\[
\boldsymbol D_{2,y}
=
\frac{1}{\Delta y^2}
\begin{pmatrix}
 2 & -5 &  4 & -1 &        &        \\
 1 & -2 &  1 &    &        &        \\
   &  1 & -2 &  1 &        &        \\
   &    & \ddots & \ddots & \ddots &  \\
   &    &        &  1 & -2 &  1 \\
   &    &        & -1 &  4 & -5 & 2
\end{pmatrix}.
\]
(Other SBP second-derivative operators can be used; see \cite{Mattsson2004} for
families of SBP $D_{2,y}$ operators and higher-order closures.) We define the 2D operators
\[
\boldsymbol D_{yy} := \boldsymbol D_{2,y}\otimes \boldsymbol I_x,
\qquad
\boldsymbol D_{y} := \boldsymbol D_{1,y}\otimes \boldsymbol I_x,
\qquad \boldsymbol{D}_{xy}=\boldsymbol{D}_{1,y}\otimes \boldsymbol{D}_{1,x},
\qquad \boldsymbol{D}_{yyyy} := \boldsymbol{D}_{2,y}^\top \boldsymbol H \boldsymbol D_{2,y}.
\]
\medskip
\noindent
With this notation, we approximate $\Delta^2 u = u_{xxxx}+u_{yyyy}+2u_{xxyy}$
\begin{equation}\label{bilap}
\boldsymbol K
:= \boldsymbol{H}^{-1}\left(
\boldsymbol D_{xxxx} + \boldsymbol{D}_{yyyy}
+ \sigma\big(\boldsymbol{D}_{xx}^\top \boldsymbol{H} \boldsymbol{D}_{yy}+\boldsymbol{D}_{yy}^\top \boldsymbol{H} \boldsymbol{D}_{xx}\big)
+2(1-\sigma)\boldsymbol{D}_{xy}^\top \boldsymbol{H} \boldsymbol{D}_{xy}\right),
\end{equation}
which will be the matrix $\boldsymbol{K}$ to be considered in \eqref{newmark}. With this construction, the boundary conditions are enforced without ghost nodes, and the resulting spatial discretization is energy-consistent and self-adjoint in the discrete $H$-inner product.
\subsection{Treatment of the Fractional Derivative.}
For the fractional derivative and the infinite memory term, we will follow the ideas from \cite{aslam25} and \cite{ammari25}. From Lemma \ref{le2} and due to the parity of $\phi$ with respect to $\vartheta$, the fractional derivative can be rewritten as
\begin{equation}\label{der_frac}
\partial_t^{\alpha,\beta}u(t) = \dfrac{2\sin(\alpha \pi)}{\pi}\int\limits_{0}^{\infty}\phi(x,y,\vartheta,t)\xi(\vartheta) d\vartheta,
\end{equation}
where $\xi(\vartheta) = |\vartheta|^{\frac{2\alpha - 1}{2}},\: \vartheta \in \mathbb{R},\: \alpha \in (0,1)$, and $\phi(x,y,\vartheta,t)$ verifies the problem
\begin{equation}\label{pvi_phi}
\begin{cases}
    &\phi_t(x,y,\vartheta,t)+(\vartheta^2 + \beta)\phi (x,y,\vartheta,t) - u_t(x,y,t)\xi(\vartheta) = 0,\: \vartheta \in \mathbb{R},\: t>0,\: \beta > 0, \\
    & \phi (x,y,\vartheta,0) = 0.
\end{cases}
\end{equation}
Observe that $\phi(x,y,\vartheta,t)$ and $\phi(x,y,-\vartheta,t)$ verify the same equation, which explains the parity property. Our proposal aims to approximate the improper integral \eqref{der_frac} as a definite integral
\[
\dfrac{2\sin(\alpha \pi)}{\pi}\int\limits_{0}^{\infty}\phi(x,y,\vartheta,t)\xi(\vartheta) d\vartheta \approx \dfrac{2\sin(\alpha \pi)}{\pi}\int\limits_{0}^{+R}\phi(x,y,\vartheta,t)\xi(\vartheta) d\vartheta ,
\]
for $R>0$ sufficiently large. Let $L\in \mathbb{N}$, $\Delta \vartheta = \dfrac{R}{L}$, $\vartheta_\ell = \ell \Delta \vartheta,\: \ell = 0,1,\dots, L$, $\xi_\ell = \xi(\vartheta_\ell)$. Let $\boldsymbol{\phi}_\ell^n = \phi(x,y,\vartheta_\ell,t_n) \in \mathbb{R}^{(J-1)(K+1)}$. The fractional derivative will be approximated as 
\begin{equation}\label{der_frac_disc}
\partial_t^{\alpha,\beta}u(t_n) \approx \dfrac{2\sin(\alpha \pi)}{\pi}\sum\limits_{\ell = 0}^{L} \boldsymbol{\phi}_\ell^n \xi_\ell \Delta \vartheta,
\end{equation}
where $\boldsymbol{\phi}_\ell^n$ is the numerical solution of \eqref{pvi_phi}, and is computed by the following Crank-Nicolson scheme:
\begin{equation}\label{esq_phi}
\begin{cases}
\boldsymbol{\phi}_\ell^{n+1} = \dfrac{2-\Delta t(\vartheta^2+\beta)}{2+\Delta t(\vartheta^2+\beta)}\boldsymbol{\phi}_\ell^n + \dfrac{\Delta t}{2+\Delta t(\vartheta^2 + \beta)}\boldsymbol{\dot{U}}^{n+\frac12}\xi_\ell,\quad n=0,1,\dots,N,\quad \ell = 0,\dots,L; \\
\boldsymbol{\phi}_\ell^{0} = \boldsymbol{\Theta},\: \ell = 0,\dots,L;
\end{cases}
\end{equation}
where $\boldsymbol{\Theta} \in \mathbb{R}^{(J-1)(K+1)}$ is the null matrix, and $\boldsymbol{\dot{U}}^n = \frac{\boldsymbol{U}^{n+1} - \boldsymbol{U}^n}{\Delta t}$. In contrast to other variables, $\boldsymbol{\phi}^n = \{ \boldsymbol{\phi}_\ell^n\}_{\ell=0}^{\ell = L}$ is a three dimensional array which imposes a heavy use of computer memory. Observe \eqref{esq_phi} is an explicit scheme in time, and therefore there is no need to store all values of $\boldsymbol{\phi}^n$ in every timestep. With this, in \eqref{newmark} we will have
\begin{align}
\boldsymbol{C_1}\boldsymbol{\dot{U}}^{n+1} &= \dfrac{2\sin(\alpha \pi)}{\pi}\sum\limits_{\ell = 0}^{L} \boldsymbol{\phi}_\ell^{n+1} \xi_\ell \Delta \vartheta \nonumber \\
&= \dfrac{2\sin(\alpha \pi)}{\pi}\sum\limits_{\ell = 0}^{L} \left(\dfrac{2-\Delta t (\vartheta^2+\beta)}{2+\Delta t (\vartheta^2+\beta)}\boldsymbol{\phi}_\ell^n + \dfrac{2\xi(\vartheta_\ell)}{2+\Delta t (\vartheta^2+\beta)}\boldsymbol{\dot{U}}^{n+\frac12}\right) \xi_\ell \Delta \vartheta \nonumber \\
&= \sum\limits_{\ell=0}^L\kappa_\ell \boldsymbol{\phi}_\ell^n \Delta \vartheta + \sum\limits_{\ell=0}^L k_\ell \boldsymbol{\dot{U}}^{n+1} \Delta \vartheta + \sum\limits_{\ell=0}^L k_\ell \boldsymbol{\dot{U}}^{n} \Delta \vartheta, \nonumber
\end{align}
where
\[ \kappa_\ell := \dfrac{2a_1\sin(\alpha \pi)\xi(\vartheta_\ell) (2-\Delta t (\vartheta^2+\beta))}{\pi(2+\Delta t (\vartheta^2+\beta))},\qquad k_\ell := \dfrac{a_1\sin(\alpha\pi)\xi^2(\vartheta_\ell) \Delta t}{\pi (2+\Delta t(\vartheta^2+\beta))}. \]
And from \eqref{newmark}, we obtain
\begin{equation}\label{C1}
\boldsymbol{C_1}\boldsymbol{\dot{U}}^{n+1} = \sum\limits_{\ell=0}^L\kappa_\ell \boldsymbol{\phi}_\ell^n \Delta \vartheta + 2\sum\limits_{\ell=0}^L k_\ell \boldsymbol{\dot{U}}^{n} \Delta \vartheta + \dfrac{\Delta t}{2} \sum\limits_{\ell=0}^L k_\ell \boldsymbol{\ddot{U}}^{n+1} \Delta \vartheta + \dfrac{\Delta t }{2} \sum\limits_{\ell=0}^L k_\ell \boldsymbol{\ddot{U}}^{n} \Delta \vartheta.
\end{equation}
\subsection{Infinite memory term.}
From \eqref{eq12}, we have
\[
-\int\limits_0^\infty g(s) \Delta^2u(x,y,t-s)ds = \int\limits_0^\infty g(s)\Delta^2\mu(x,y,t,s)ds,
\]
where $\mu$ verifies \eqref{2.3m}. For $t=0$, observe that
\[ \mu(x,y,0,s) = u_0(x,y)-u(x,y,-s) = u_0(x,y) - h(x,y,s).\]
Similar to what we have done for the fractional derivative, we will approximate the improper integral as follows:
\[
\int\limits_0^\infty g(s)\Delta^2\mu(x,y,t,s)ds \approx \int\limits_0^S g(s)\Delta^2\mu(x,y,t,s)ds,
\]
for $S>0$ sufficiently big. Let $M\in\mathbb{N}$. We will write $s_m := m\Delta s,\: m=0,1,\dots,M$, $\Delta s = \frac{S}{M}$, and $\boldsymbol{\mu}_m^n = \mu(x,y,t_n,s_m) \in \mathbb{R}^{(J-1)(K+1)}$. The previous integral will be computed as follows
\begin{equation}\label{memo}
\int\limits_0^S g(s)\Delta^2\mu(x,y,t_n,s)ds \approx \sum\limits_{m=0}^M g(s_m) \boldsymbol{K} \boldsymbol{\mu}_m^n \Delta s,
\end{equation}
where $\boldsymbol{K}$ is defined in \eqref{bilap}, and $\boldsymbol{\mu}_m^n$ is the numerical solution of \eqref{2.3m} and is computed through the following upwind finite volume scheme: 
\begin{equation}\label{esq_mu}
\begin{cases}
    \boldsymbol{\mu}_{m}^{n+1} = \left(1-\dfrac{\Delta t}{\Delta s} \right)\boldsymbol{\mu}_{m}^{n} +\dfrac{\Delta t}{\Delta s}\boldsymbol{\mu}_{m-1}^{n} + \Delta t \boldsymbol{\dot{U}}^{n+\frac12},\: n=0,1,\dots,N \\
    \boldsymbol{\mu}_{m}^{0} = \boldsymbol{U}^0 - \boldsymbol{h}_m,
    \end{cases}
\end{equation}
for $\boldsymbol{h}_m = h(x,y,s_m) \in \mathbb{R}^{(J-1)(K+1)}$. The use of $\boldsymbol K$ in \eqref{memo} instead of another bilaplacian matrix is coherent with the definition \eqref{eq12}, as both $\mu(x,y,t,s)$ and $u(x,y,t)$ functions fulfill the same boundary conditions at $x =0$, $x=\pi$ and $y = \pm d$. Thus, at $t = t_{n+1}$ and due to the Newmark relations in \eqref{newmark}, the approximation \eqref{memo} turns into
\begin{align} 
\boldsymbol{C_2}\boldsymbol{\dot{U}}^{n+1}&= \sum\limits_{m=0}^M g(s_m) \boldsymbol{K} \boldsymbol{\mu}_m^{n+1} \Delta s \nonumber \\
&= \sum\limits_{m=0}^M g(s_m) \boldsymbol{K}\left( \left(1-\dfrac{\Delta t}{\Delta s} \right) \boldsymbol{\mu}_m^{n} + \dfrac{\Delta t}{\Delta s}\boldsymbol{\mu}_{m-1}^{n}  + \Delta t \boldsymbol{\dot{U}}^{n+\frac12}\right)\Delta s \nonumber \\
&= \left(1-\dfrac{\Delta t}{\Delta s} \right)\sum\limits_{m=0}^M g(s_m) \boldsymbol{K}  \boldsymbol{\mu}_m^{n} \Delta s
+ \sum\limits_{m=0}^M g(s_m)\boldsymbol{K\mu}_{m-1}^{n}\Delta t  \nonumber \\
&\qquad+ \sum\limits_{m=0}^M g(s_m)\dfrac{\boldsymbol{K\dot{U}}^{n+1}+ \boldsymbol{K\dot{U}}^{n}}{2} \Delta s \nonumber \\
&= \left(1-\dfrac{\Delta t}{\Delta s} \right)\sum\limits_{m=0}^M g(s_m) \boldsymbol{K}  \boldsymbol{\mu}_m^{n} \Delta s + \sum\limits_{m=0}^M g(s_m)\boldsymbol{K\mu}_{m-1}^{n}\Delta t  \nonumber \\
&+ \dfrac{\Delta t}{4} \sum\limits_{m=0}^M g(s_m) \boldsymbol{K}\left(\boldsymbol{\ddot{U}}^{n+1}+\boldsymbol{\ddot{U}}^n\right) \Delta s + \sum\limits_{m=0}^M g(s_m) \boldsymbol{K \dot{U}}^n \Delta s \label{C2}
\end{align}
\subsection{Nonlinear term}
Regarding the right hand side of \eqref{eq2.5}$_1$, we will follow the ideas from Delfour, Fortin and Payre \cite{delfour} and define the following operator
\begin{equation}\label{fun_J}
\mathcal{J}(a,b):= \begin{cases} \dfrac1{p}\dfrac{|a|^{p}-|b|^{p} }{(|a|^2-|b|^2)}(a+b),\quad &\text{ if } a\neq b, \\ |a|^{p-2}a & \text{ if } a = b. \end{cases}
\end{equation}
This is done in order to ensure the energy conservation if no damping terms are present. 
\subsection{Assembly of the numerical scheme}
In order to assembly the scheme, we will start from \eqref{prob_lineal} at $t = t_{n+1}$, incorporating \eqref{fun_J}, writing $\boldsymbol{C} = \boldsymbol{C_1} + \boldsymbol{C_2}$ and considering $\boldsymbol{M}$ as the identity matrix $\boldsymbol I$. Thus, we have 
\begin{equation}\label{esq_num_1}
    \boldsymbol{\ddot{U}}^{n+1} + (\boldsymbol{C_1+C_2})\boldsymbol{\dot{U}}^{n+1}+\boldsymbol{KU}^{n+1} = \mathcal{J}(\boldsymbol{U}^{n+1},\boldsymbol{U}^n).
\end{equation}
As $\boldsymbol{K}$ was defined in \eqref{bilap}, we only need to incorporate $\boldsymbol{C_1}$ and $\boldsymbol{C_2}$ from \eqref{C1} and \eqref{C2}, respectively. Thus, \eqref{esq_num_1} combined with \eqref{esq_phi} and \eqref{esq_mu} turns into the numerical scheme we will use to replicate the results proved in previous sections:
\begin{equation}\label{esq_num}
\begin{cases}
        &\displaystyle \left( \boldsymbol{I} + \dfrac{\Delta t}{2} \sum\limits_{\ell=0}^L k_\ell \boldsymbol{I} + \dfrac{\Delta t}{4} \sum\limits_{m=0}^M g(s_m) \boldsymbol{K} \Delta s + \dfrac{\Delta t^2}{4}\boldsymbol{K} \right)\boldsymbol{\ddot{U}}^{n+1}  \\
        &\displaystyle \qquad = \mathcal{J}(\boldsymbol{U}^{n+1},\boldsymbol{U}^n) - \boldsymbol{K}\left(\boldsymbol{U}^n + \Delta t \boldsymbol{\dot{U}}^n + \dfrac{\Delta t^2}{4}\boldsymbol{\ddot{U}}^n \right) - \sum\limits_{\ell=0}^L\kappa_\ell \boldsymbol{\phi}_\ell^n \Delta \vartheta \\
        &\displaystyle \qquad - 2\sum\limits_{\ell=0}^L k_\ell \boldsymbol{\dot{U}}^{n} \Delta \vartheta \Delta \vartheta - \dfrac{\Delta t }{2} \sum\limits_{\ell=0}^L k_\ell \boldsymbol{\ddot{U}}^{n} \Delta \vartheta 
        - \left(1-\dfrac{\Delta t}{\Delta s} \right)\sum\limits_{m=0}^M g(s_m) \boldsymbol{K}  \boldsymbol{\mu}_m^{n} \Delta s \\
        & \displaystyle \qquad - \sum\limits_{m=0}^M g(s_m)\boldsymbol{K\mu}_{m-1}^{n}\Delta t  - \dfrac{\Delta t}{4} \sum\limits_{m=0}^M g(s_m) \boldsymbol{K}\boldsymbol{\ddot{U}}^n\Delta s + \sum\limits_{m=0}^M g(s_m) \boldsymbol{K \dot{U}}^n \Delta s \\
        & \boldsymbol{\dot{U}}^{n+1} = \boldsymbol{\dot{U}}^{n}+ \dfrac{\Delta t}{2}\boldsymbol{\ddot U}^{n} + \dfrac{\Delta t}{2}\boldsymbol{\ddot U}^{n+1} \\
        & \boldsymbol{U}^{n+1} = \boldsymbol{U}^{n} + \Delta t \boldsymbol{\dot{U}}^{n} + \dfrac{\Delta t^2}{4}\boldsymbol{\ddot{U}}^{n} + \dfrac{\Delta t^2}{4}\boldsymbol{\ddot{U}}^{n+1} \\
        & \boldsymbol{U}_{j,k}^0 = u_0(x_j,y_k),\: j=0,1,\dots,J, k = 0,1,\dots,K; \\
        &\boldsymbol{\mu}_{m}^{n+1} = \left(1-\dfrac{\Delta t}{\Delta s} \right)\boldsymbol{\mu}_{m}^{n} +\dfrac{\Delta t}{\Delta s}\boldsymbol{\mu}_{m-1}^{n} + \Delta t \boldsymbol{\dot{U}}^{n+\frac12}, \quad n=0,1,\dots,N,\: m=0,1,\dots,M; \\
        &\boldsymbol{\mu}_{m}^{0} = \boldsymbol{U}^0 - \boldsymbol{h}_m,\: m=0,1,\dots,M; \\
        &\boldsymbol{\phi}_\ell^{n+1} = \dfrac{2-\Delta t(\vartheta^2+\beta)}{2+\Delta t(\vartheta^2+\beta)}\boldsymbol{\phi}_\ell^n + \dfrac{\Delta t}{2+\Delta t(\vartheta^2 + \beta)}\boldsymbol{\dot{U}}^{n+\frac12}\xi_\ell,\: \ell = 0,\dots,L,\: n=0,1,\dots,N; \\
&\boldsymbol{\phi}_\ell^{0} = \boldsymbol{\Theta},\: \ell = 0,\dots,L.
\end{cases}
\end{equation}
Because \eqref{esq_num}$_1$ is a nonlinear problem, we will solve it using a fixed-point iteration. Therefore, we will solve a linear system of equations in each timestep. Also in each timestep we will compute the discrete version of the energy defined in \eqref{eq2.6}, which will be given by
\begin{equation}\label{energy_num}
\begin{aligned}
E^n &:= \dfrac{\lambda}{2} (\boldsymbol{U}^n)^\top \boldsymbol{K} \boldsymbol{U}^n \Delta x \Delta y + \dfrac{1}{2}(\boldsymbol{\dot U}^n)^\top \boldsymbol{\dot U}^n\Delta x \Delta y \\
&\qquad + \dfrac{\kappa}{2} \sum_{\ell=0}^L (\boldsymbol{\phi}_\ell^n)^\top \boldsymbol{\phi}_\ell^n\Delta\vartheta \Delta x \Delta y  -\dfrac1p  \left((\boldsymbol{U}^n)^\top \boldsymbol{U}^n\right)^{\frac{p}{2}} \Delta x \Delta y \\
&\qquad + \dfrac12 \sum\limits_{m=0}^M g(s_m) \Big( \left(\boldsymbol{D}_{xx}\boldsymbol\mu_m^n\right)^\top \left(\boldsymbol{D}_{xx}\boldsymbol\mu_m^n\right) + \left(\boldsymbol{D}_{yy}\boldsymbol\mu_m^n\right)^\top \left(\boldsymbol{D}_{yy}\boldsymbol\mu_m^n\right) \\
&\qquad + 2\sigma \left(\boldsymbol{D}_{xx}\boldsymbol\mu_m^n\right)^\top \left(\boldsymbol{D}_{yy}\boldsymbol\mu_m^n\right) + 2(1-\sigma)\left(\boldsymbol{D}_{xy}\boldsymbol\mu_m^n\right)^\top \left(\boldsymbol{D}_{xy}\boldsymbol\mu_m^n\right)   \Big)\Delta s
\end{aligned}
\end{equation}
The following experiments illustrate the theoretical results on stability and
blow-up derived in Sections \ref{sec_stability} and \ref{sec_blowup}.
\subsection{Computational results: exponential decay}

\begin{figure}[hbt]
    \centering
    \includegraphics[width=0.8\linewidth]{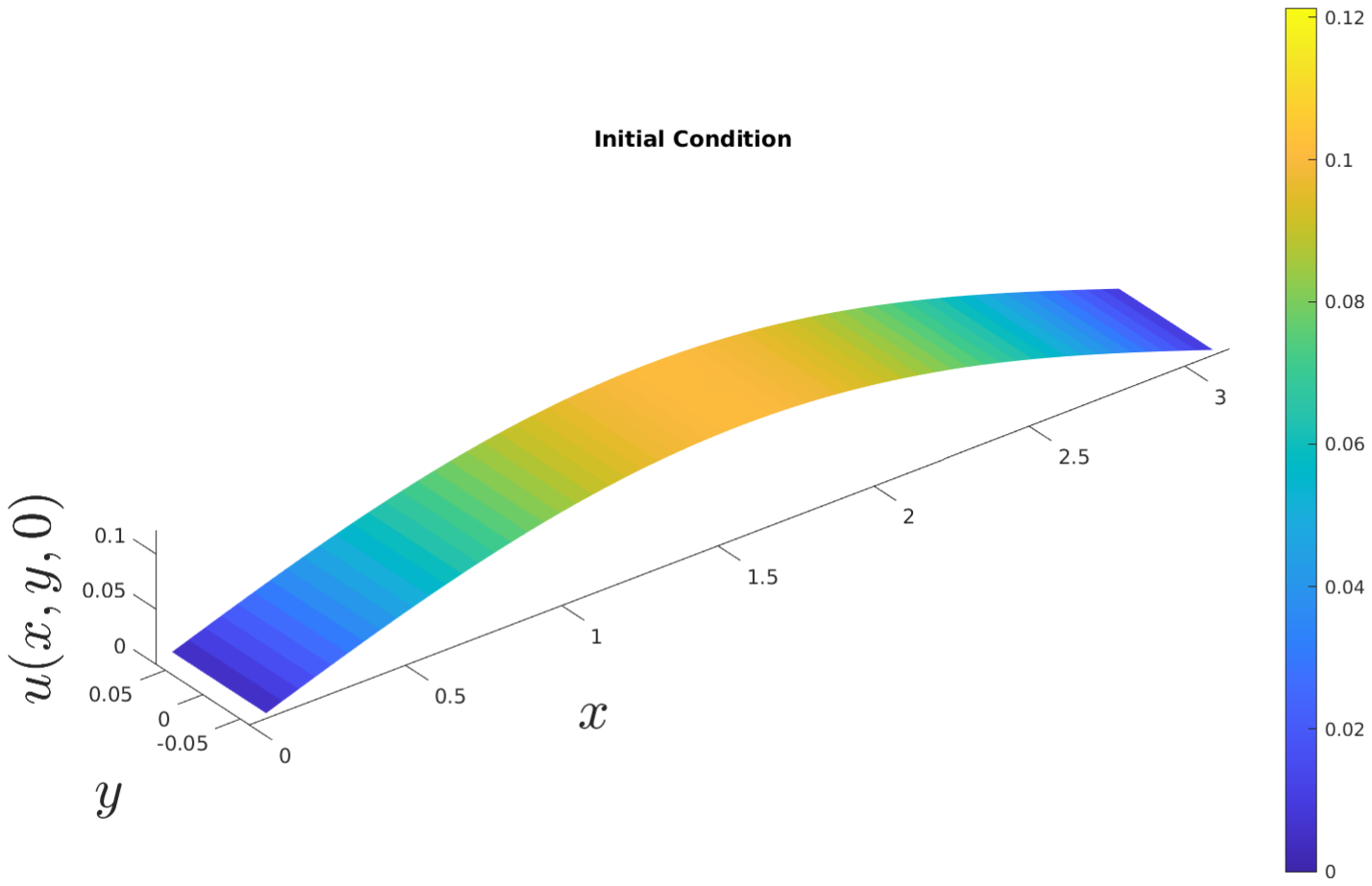}
    \caption{Initial displacement profile $u(x,y,0)$ of the bridge deck
    obtained from the truncated solution of the static problem
    $\Delta^2 u(x,y)=\sin(x)/10$.}
    \label{fig_initial}
\end{figure}

In this subsection we illustrate the exponential stability result stated in
Theorem~\ref{thm5.5} for several values of $p>2$. In all simulations we take
\[
d=\frac{\pi}{50},\qquad R=10\pi,\qquad L=100,\qquad T=1000,\qquad S=80,
\]
\[
\alpha=0.95,\qquad \beta=2.5,\qquad \sigma=0.1,\qquad \lambda=\frac12,\qquad a_1=1,
\]
together with
$
\Delta x=\dfrac{\pi}{60}\approx 0.05236,\
\Delta y=\dfrac{\pi}{500}\approx 0.00628,\
\Delta t=0.001,\
\Delta s=\dfrac12,$ and 
$
\Delta\vartheta=\dfrac{2R}{L}\approx 0.62831.
$

The initial data are chosen as
\[
g(s)=\frac{e^{-2s}}{10000},\qquad h(t)=e^{2t},\qquad u_t(x,y,0)=0,
\]
while $u(x,y,0)$ is taken as the truncation to the first $100$ terms of the
exact solution of the static problem (see \cite{Ferr1}, Theorem~3.2)
\[
\Delta^2u(x,y)=\frac{\sin(x)}{10}.
\]

Figure~\ref{fig_initial} illustrates the initial displacement profile
$u(x,y,0)$ of the bridge deck used in the simulations. This profile
corresponds to the truncated series representation of the solution of
the static problem described above.

In order to compare the numerical experiments with the sufficient condition
given in Lemma~\ref{lem4.1}, we estimate the embedding constant $C_e$
using the discrete procedure described in Appendix~\ref{app1}, based on
the first generalized eigenpair of the stiffness matrix associated with
the SBP discretization.

Table~\ref{table1} reports the resulting numerical estimates of $C_e$,
together with the left- and right-hand sides of the first inequality
in \eqref{eq4.3}, for the values of $p$ considered in the simulations.
All computations in Table~\ref{table1} were performed with
$J=60$, $K=20$, $d=\pi/50$, $\sigma=0.1$, and $\lambda=0.5$.
The third column corresponds to the left-hand side of the condition in
Lemma~\ref{lem4.1}, whereas the fourth column gives the right-hand side.

We observe that the condition of Lemma~\ref{lem4.1} is satisfied for
$p=3,4,5$, but not for $p=2.5$. Therefore, the theoretical hypothesis
ensuring exponential decay is verified only in the first three cases.
Nevertheless, as shown in Figure~\ref{fig1}, the numerical solution also
exhibits decay for $p=2.5$. This indicates that the condition in
Lemma~\ref{lem4.1} is sufficient but not necessary, and suggests that the
theoretical threshold may be conservative.

\begin{table}[hbt]
    \centering
    \begin{tabular}{c|c|c|c}
         $p$ & $C_e$& $C_{e}\left(\frac{2 p}{p-2} E(0)\right)^{\frac{p-2}{2}}$ & $\lambda^{\frac{p}{2}}$  \\ \hline
         $2.5$ & {\tt 3.351865562758652}& {\tt 0.792476072196316} & {\tt 0.420448207626857}   \\ \hline
         $3$ & {\tt 5.632082885891357} & {\tt 0.305108317014464} & {\tt 0.353553390593274}  \\ \hline
         $4$ & {\tt 16.337171741287030} & {\tt 0.035470019976350} & {\tt 0.25} \\ \hline
         $5$ & {\tt 48.561258580598867} & {\tt 0.003774083392963} & {\tt 0.176776695296637} \\
    \end{tabular}
    \caption{Estimated values of $C_e$ and of the two sides of the first inequality in \eqref{eq4.3}. The computations were carried out with $J=60$, $K=20$, $d=\pi/50$, $\sigma=0.1$, and $\lambda=0.5$.}
    \label{table1}
\end{table}

\begin{figure}[h]
    \centering
    \includegraphics[width=0.8\linewidth]{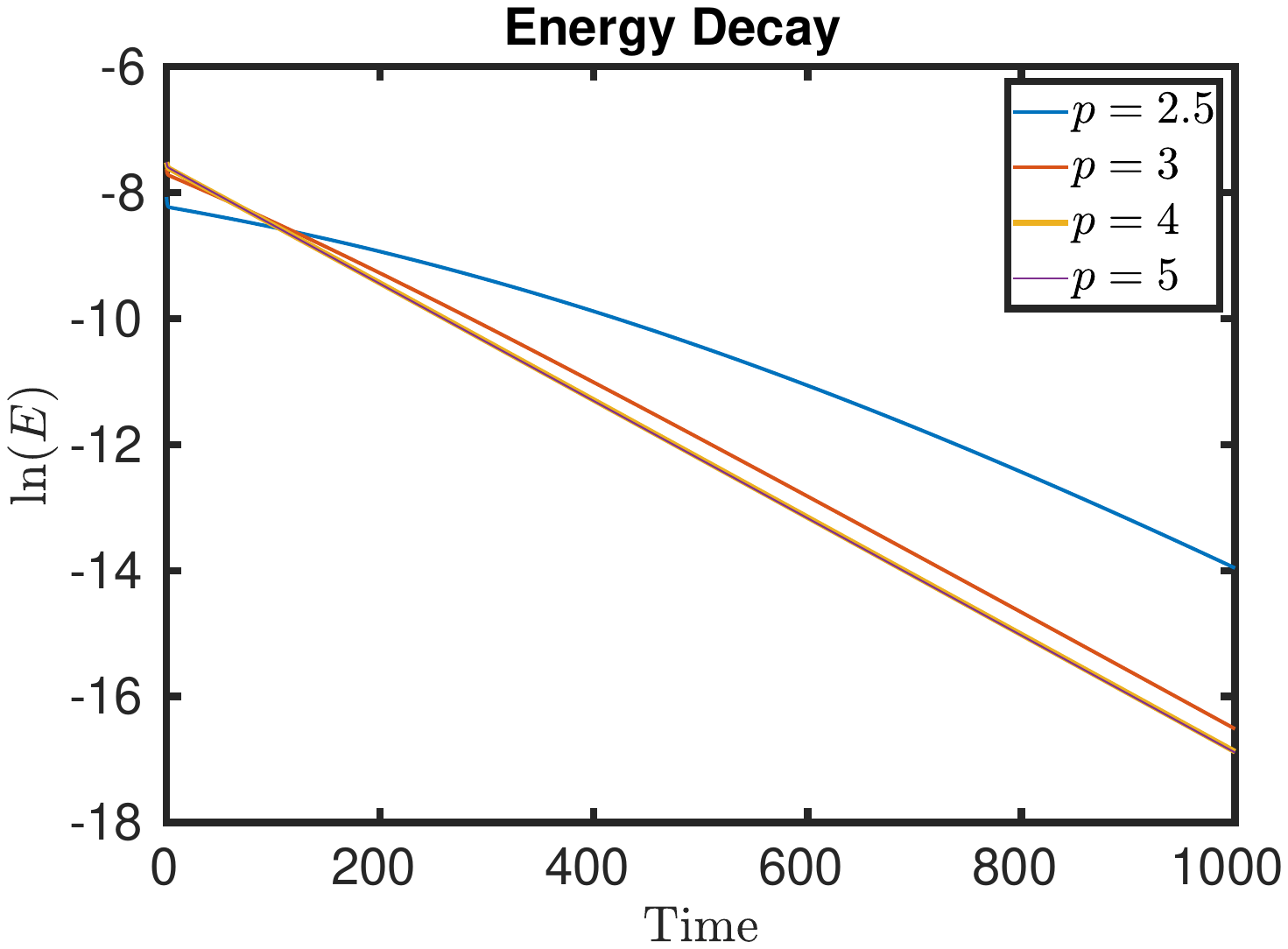}
    \caption{Evolution of the discrete energy for different values of $p$, illustrating the decay regime.}
    \label{fig1}
\end{figure}

Figure~\ref{fig1} displays the evolution of the discrete energy $E^n$
defined in \eqref{energy_num} for the above values of $p$.
For $p=3,4,5$, the observed decay is fully consistent with the theoretical
prediction of Theorem~\ref{thm5.5}, since in these cases the assumptions of
Lemma~\ref{lem4.1} are satisfied. For $p=2.5$, although the sufficient
condition is not fulfilled, the numerical solution still exhibits a decaying
regime, with a much longer transient phase before the exponential behavior
becomes clearly visible. This suggests that the analytical condition may be
relaxed, which could be an interesting direction for future investigation.
\subsection{Computational results: blow-up in finite time}
We conclude with a numerical experiment illustrating the blow-up
phenomenon predicted by Theorem~\ref{teo6.2}. In this case we choose
\[
p=2.1,\qquad d=\frac{\pi}{50},\qquad R=10\pi,\qquad L=100,\qquad
T=1800,\qquad S=80,
\]
\[
\alpha=\frac12,\qquad \beta=3.5,\qquad
\sigma=0.1,\qquad \lambda=\frac45,\qquad a_1=1,
\]
together with
$\Delta x=\dfrac{\pi}{60}\approx 0.05236,\
\Delta y=\dfrac{\pi}{500}\approx 0.00628,\
\Delta t=0.001,\
\Delta s=\dfrac12$ and $
\Delta\vartheta=\dfrac{2R}{L}\approx 0.62831.
$

The initial data are chosen as
\[
g(s)=\frac{e^{-5s}}{1000},\qquad
h(t)=e^{5t},\qquad
u_t(x,y,0)=0.
\]
In contrast with the previous subsection, the initial displacement
$u(x,y,0)$ is obtained from the solution of the static problem

\[
\Delta^2 u(x,y)=5\sin(x),
\]

which produces a significantly larger deformation of the bridge deck.
Figure~\ref{fig_initial_blowup} illustrates the resulting initial
configuration used in the simulation.

\begin{figure}[h]
    \centering
    \includegraphics[width=0.8\linewidth]{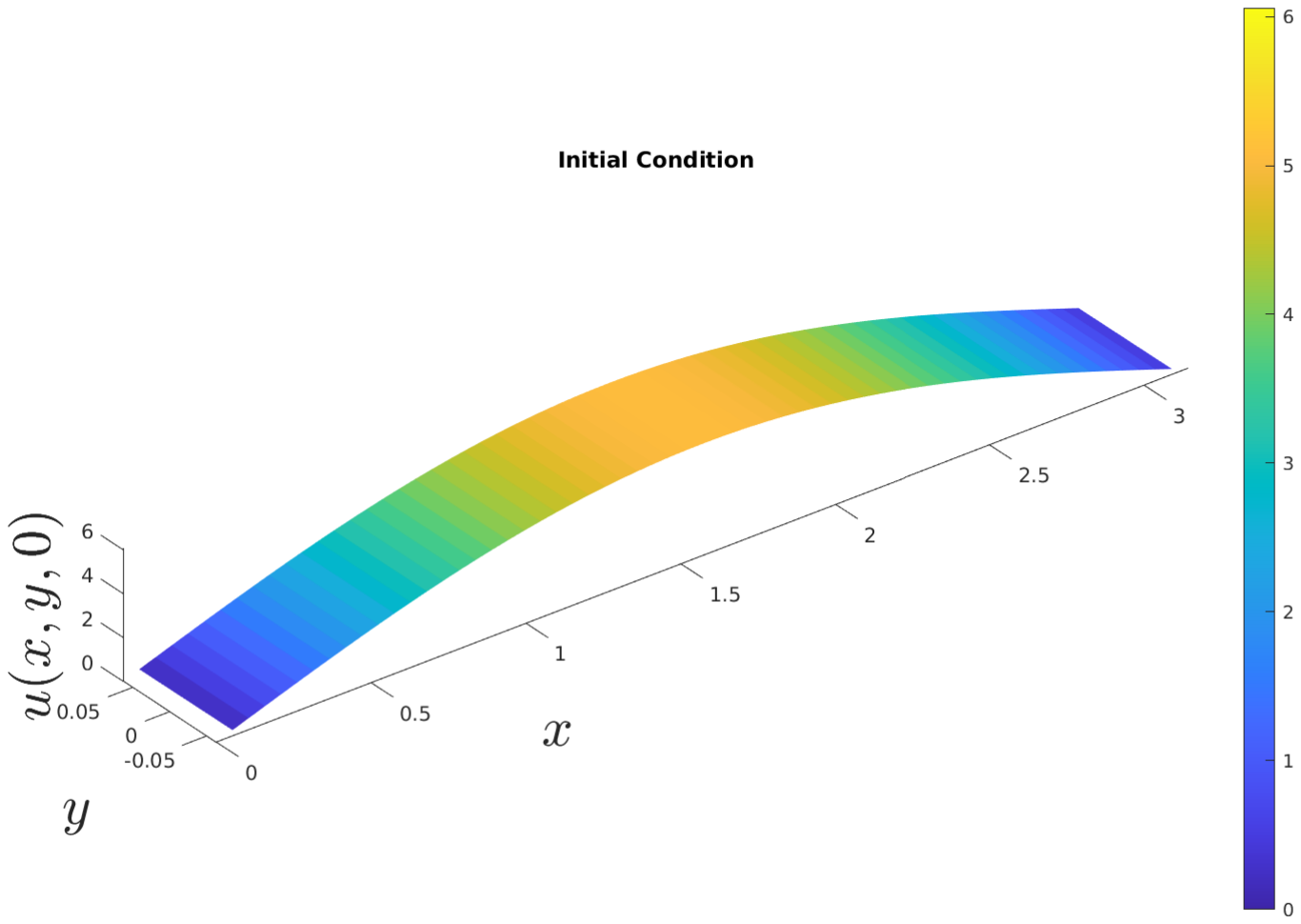}
    \caption{Initial displacement profile $u(x,y,0)$ used in the blow-up
    experiment. The profile is obtained from the solution of the static
    problem $\Delta^2 u(x,y)=5\sin(x)$.}
    \label{fig_initial_blowup}
\end{figure}

For this configuration the discrete initial energy satisfies
\[
E(0)=-0.483197525092926<0,
\]
with $p=2.1$, and
which verifies the hypothesis of Theorem~\ref{teo6.2}.

Figure~\ref{fig2} shows the time evolution of the numerical solution.
A rapid growth of the amplitude is clearly observed, which is consistent
with the finite-time blow-up behavior predicted by the theoretical
analysis when the initial energy is negative.

\begin{figure}[hbt]
    \centering
    \includegraphics[width=0.8\linewidth]{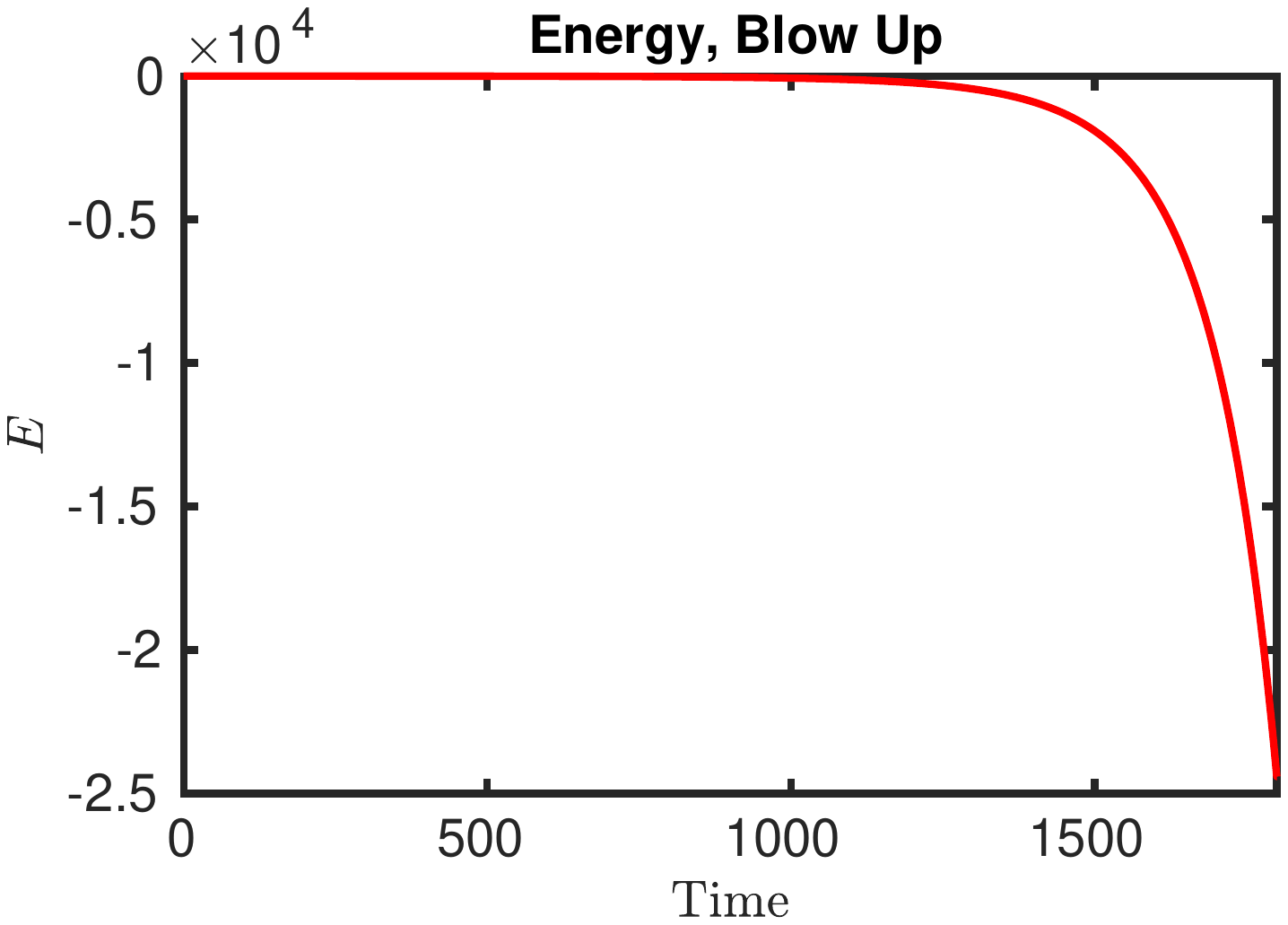}
    \caption{Time evolution of the numerical solution in the blow-up regime.}
    \label{fig2}
\end{figure}
\section{Conclusion and Future Work}
In this paper we analyzed a suspension bridge model governed by a nonlinear
plate equation that incorporates fractional damping, an infinite-memory term,
and a nonlinear source. Using semigroup techniques, we first established the
local well-posedness of the system in an appropriate energy space. We then
proved global existence and exponential stability of solutions under suitable
conditions on the initial data. In contrast, when the initial energy is
negative, we showed that the corresponding solutions blow up in finite time.
These results reveal the presence of a threshold phenomenon in the dynamics of
the model and highlight the interplay between dissipative effects and nonlinear
instability.

To complement the theoretical analysis, we also proposed a numerical
approximation based on SBP--SAT finite differences combined with a Newmark
time-integration scheme. The numerical experiments illustrate the stability and
blow-up regimes predicted by the analytical results. In particular, the
simulations show how different initial configurations of the bridge deck
lead to qualitatively distinct dynamical behaviors. The experiments include
two representative initial profiles obtained from stationary biharmonic
problems, which are displayed in Figures~\ref{fig_initial} and
\ref{fig_initial_blowup}. These configurations illustrate how the magnitude
of the initial deformation influences the sign of the initial energy and,
consequently, the long-term evolution of the system. In the case where the
initial energy is positive, the solution exhibits a decaying regime, whereas
a negative initial energy leads to the blow-up behavior predicted by the
theoretical analysis.

Several extensions of this work are possible. In particular, it would be
interesting to study related suspension bridge models involving alternative
damping mechanisms, such as structural damping of the form $-\Delta v_t$,
strong damping of the form $\Delta^2 v_t$, or viscoelastic effects described by
memory terms of the type
\[
-\int_0^t g(t-s)\Delta^2 v(s)\,ds,
\]
where $g$ denotes a relaxation kernel. Another promising direction is the
analysis of models combining such mechanisms with fractional time-delay
effects, as well as the investigation of the long-time dynamics and asymptotic
behavior of these systems.

An additional observation arising from the numerical experiments concerns
the threshold condition in Lemma~\ref{lem4.1}. While this condition is
satisfied for several values of $p$ considered in the simulations, the case
$p=2.5$ still exhibits a decaying regime even though the corresponding
inequality is not fulfilled. This suggests that the analytical condition is
sufficient but not necessary, and indicates that a sharper theoretical
threshold for exponential decay may be possible. Investigating such an
improvement remains an interesting topic for future work.

{\bf Acknowledgments:} Zayd Hajjej is supported by Ongoing Research Funding Program (ORF-2026-736), King Saud University, Riyadh, Saudi Arabia.
Mauricio Sepúlveda-Cortés is supported by Fondecyt-ANID project 1220869,
and
ANID-Chile through Centro de Modelamiento  Matem\'atico  (FB210005).

\appendix
\section{Appendix: Estimation of the embedding constant}\label{app1}

The constant $C_e$ appearing in Lemma \ref{lem2.2} arises from the Sobolev
embedding $H_*^2(\Omega_d)\hookrightarrow L^r(\Omega_d)$ and is therefore
not explicitly known. However, in the numerical experiments we require
an estimate of this constant in order to verify the condition in
Lemma \ref{lem4.1} ensuring exponential decay.

In this appendix we briefly explain how such an estimate can be obtained.
First, we derive a theoretical bound showing how the embedding constant
depends on the strip width $d$. We then describe a practical numerical
approximation based on the matrices arising from the SBP discretization.

\subsection*{Continuous estimate}

Let $\Omega_d=(0,\pi)\times(-d,d)$ with $0<d\le1$, and let
\[
H_*^2(\Omega_d)=\{u\in H^2(\Omega_d): u=0 \text{ on } \{0,\pi\}\times(-d,d)\}.
\]

\begin{lemma}
Let $1\le r<\infty$. Then there exists a constant $C_{r,\sigma}>0$,
independent of $d$, such that
\[
\|u\|_{L^r(\Omega_d)}^r
\le C_{r,\sigma}\, d^{\,1-\frac r2}\,
\|u\|_{H_*^2(\Omega_d)}^r,
\qquad \forall u\in H_*^2(\Omega_d).
\]
Consequently, the embedding constant in Lemma \ref{lem2.2} satisfies
\[
C_e(d,r)\le C_{r,\sigma}\, d^{\,1-\frac r2}.
\]
\end{lemma}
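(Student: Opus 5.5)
The plan is to keep every constant visibly independent of $d$. We do this in three steps: reduce the $H^2_*$-norm to a full $H^2$-seminorm with a $\sigma$-dependent constant only, control $L^2$ and lower-order derivatives by one-dimensional Poincaré inequalities in $x$, and prove an anisotropic $L^\infty$ bound on the thin strip that produces exactly the factor $d^{-1/2}$. Interpolation between $L^2$ and $L^\infty$ then gives the power $d^{1-\frac r2}$.

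\emph{Step 1 (uniform coercivity).} Expanding the integrand of $(u,u)_{H^2_*(\Omega_d)}$ pointwise gives
\begin{equation*}
(\Delta u)^2+(1-\sigma)\bigl(2u_{xy}^2-2u_{xx}u_{yy}\bigr)=\sigma(\Delta u)^2+(1-\sigma)\bigl(u_{xx}^2+u_{yy}^2+2u_{xy}^2\bigr).
\end{equation*}
Since $0<\sigma<\frac12$, this yields $\|D^2u\|_{L^2(\Omega_d)}^2\le (1-\sigma)^{-1}\|u\|_{H^2_*(\Omega_d)}^2$, with no dependence on $d$.

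Next, for a.e. $y$ the function $x\mapsto u(x,y)$ vanishes at $x=0,\pi$. Hence $\int_0^\pi u_x\,dx=0$, and the one-dimensional Poincaré and Wirtinger inequalities on $(0,\pi)$ give $\|u\|\le\|u_x\|\le\|u_{xx}\|$ after integrating in $y$. The same argument applied to $u_y$ (which also vanishes at $x=0,\pi$) gives $\|u_y\|\le\|u_{xy}\|$. Altogether,
\begin{equation*}
\|u\|_{H^2(\Omega_d)}\le C_\sigma\|u\|_{H^2_*(\Omega_d)}.
\end{equation*}

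\emph{Step 2 (thin-strip $L^\infty$ bound).} Set $\bar u(x)=\frac1{2d}\int_{-d}^d u(x,y)\,dy$. Then
\begin{equation*}
|u(x,y)-\bar u(x)|\le\int_{-d}^d|u_y|\,dy\le (2d)^{1/2}\|u_y(x,\cdot)\|_{L^2(-d,d)}, \qquad |\bar u(x)|\le(2d)^{-1/2}\|u(x,\cdot)\|_{L^2(-d,d)}.
\end{equation*}
For $f(x)=\|w(x,\cdot)\|_{L^2(-d,d)}$ we have $|f'|\le\|w_x(x,\cdot)\|_{L^2(-d,d)}$. The one-dimensional Sobolev inequality on $(0,\pi)$ then gives $\sup_x f\le C(\|w\|+\|w_x\|)$, with $C$ depending only on $(0,\pi)$. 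Applying this with $w=u$ and with $w=u_y$, and using $d\le1$, we get
\begin{equation*}
\|u\|_{L^\infty(\Omega_d)}\le C\bigl[d^{-1/2}(\|u\|+\|u_x\|)+d^{1/2}(\|u_y\|+\|u_{xy}\|)\bigr]\le C\,d^{-1/2}\|u\|_{H^2(\Omega_d)}.
\end{equation*}

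\emph{Step 3 (interpolation).} For $r\ge2$,
\begin{equation*}
\|u\|_r^r\le\|u\|_\infty^{r-2}\|u\|_2^2\le C\,d^{-\frac{r-2}{2}}\|u\|_{H^2}^{r}\le C_{r,\sigma}\,d^{1-\frac r2}\|u\|_{H^2_*}^r,
\end{equation*}
by Steps 1 and 2. For $1\le r<2$, Hölder's inequality gives
\begin{equation*}
\|u\|_r^r\le|\Omega_d|^{1-\frac r2}\|u\|_2^r=(2\pi d)^{1-\frac r2}\|u\|_2^r,
\end{equation*}
which is again of the required form by Step 1. The consequence for $C_e(d,r)$ follows immediately.

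The main obstacle is Step 2: the standard Sobolev embedding on $\Omega_d$ has a constant that degenerates in an uncontrolled way as $d\to0$. This is why I would avoid rescaling and extension arguments and use the explicit fiberwise one-dimensional estimates above. In those estimates the only place $d$ enters is the averaging over $(-d,d)$, which makes the $d^{-1/2}$ dependence transparent.
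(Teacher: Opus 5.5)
Your proof is correct, but it takes a different route from the paper. The paper rescales. With $v(x,\eta)=u(x,d\eta)$ on the fixed strip $\Omega_1=(0,\pi)\times(-1,1)$, one has $\|u\|_{L^r(\Omega_d)}^r=d\,\|v\|_{L^r(\Omega_1)}^r$, and the Sobolev embedding on $\Omega_1$ has a constant independent of $d$. Every $\eta$-derivative of $v$ carries a positive power of $d$, so $\|v\|_{H^2(\Omega_1)}^2\le d^{-1}\|u\|_{H^2(\Omega_d)}^2$ for $d\le 1$. Combining these gives the factor $d\cdot d^{-r/2}$ for every $r\in[1,\infty)$ at once.

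Your remark that rescaling should be avoided because the constants degenerate is therefore not accurate. The rescaled domain is fixed, and the anisotropy costs exactly the $d^{-1/2}$ that you derive by hand.

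What your fiberwise argument buys is explicitness. You get an elementary bound $\|u\|_{L^\infty(\Omega_d)}\le C\,d^{-1/2}\|u\|_{H^2(\Omega_d)}$ (the $r\to\infty$ endpoint), with constants depending only on $(0,\pi)$, and then the $L^2$--$L^\infty$ interpolation. The price is more length and a separate H\"older argument for $1\le r<2$.

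More importantly, your Step 1 supplies something the paper's proof only asserts. The paper ends by invoking ``the equivalence between the $H^2$ and $H^2_*$ norms'', but the statement requires the equivalence constant to be independent of $d$, and this is never checked. The equivalence quoted from the literature is for a fixed domain.

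Your pointwise identity rewrites the $H^2_*$ integrand as $\sigma(\Delta u)^2+(1-\sigma)(u_{xx}^2+u_{yy}^2+2u_{xy}^2)$. Combined with the one-dimensional Poincar\'e and Wirtinger inequalities in $x$, applied to $u$ and to $u_y$ (both vanish at $x=0,\pi$), it gives $\|u\|_{H^2(\Omega_d)}\le C_\sigma\|u\|_{H^2_*(\Omega_d)}$ with $C_\sigma$ independent of $d$. This closes that gap, and it could equally be appended to the paper's rescaling argument.
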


\begin{proof}
Let $u\in H_*^2(\Omega_d)$ and define $v(x,\eta)=u(x,d\eta)$ on
$\Omega_1=(0,\pi)\times(-1,1)$. A change of variables gives
\[
\|u\|_{L^r(\Omega_d)}^r
= d\,\|v\|_{L^r(\Omega_1)}^r.
\]
Since $\Omega_1$ is fixed, the Sobolev embedding
$H^2(\Omega_1)\hookrightarrow L^r(\Omega_1)$ yields
\[
\|v\|_{L^r(\Omega_1)}^r \le C_r \|v\|_{H^2(\Omega_1)}^r.
\]
Differentiation shows that
\[
\|v\|_{H^2(\Omega_1)}^r \le d^{-r/2}\|u\|_{H^2(\Omega_d)}^r,
\]
which gives
\[
\|u\|_{L^r(\Omega_d)}^r
\le C_r\, d^{1-r/2}\,\|u\|_{H^2(\Omega_d)}^r.
\]
Finally, the equivalence between the $H^2$ and $H_*^2$ norms
yields the result.
\end{proof}

\subsection*{Discrete approximation}

For the numerical simulations we approximate the embedding constant
using the discrete matrices arising from the SBP discretization.
Let $K$ denote the stiffness matrix and $W$ the diagonal quadrature
matrix associated with the SBP norm. For a discrete vector
$U\in\mathbb{R}^N$, define
\[
\|U\|_{p,h}^p=\sum_{i=1}^N w_i |U_i|^p,
\qquad
\|U\|_{*,h}^2=U^\top K U .
\]

A discrete analogue of the embedding constant is
\[
C_{e,h}(p)=
\sup_{U\neq0}
\frac{\|U\|_{p,h}^p}{(U^\top K U)^{p/2}}.
\]

When $p=2$ this reduces to the generalized Rayleigh quotient
\[
C_{e,h}(2)
=
\sup_{U\neq0}\frac{U^\top W U}{U^\top K U}
=
\frac{1}{\lambda_1},
\]
where $\lambda_1$ is the smallest generalized eigenvalue of
\[
K\phi=\lambda W\phi .
\]

The eigenpair $(\lambda_1,\phi_1)$ can be computed efficiently
using standard numerical methods for symmetric generalized
eigenvalue problems (see, for instance, \cite{Parlett1998}).

For $p>2$, computing $C_{e,h}(p)$ exactly requires solving a
nonlinear constrained maximization problem. In practice we use
the approximation
\[
\widehat C_{e,h}(p)
=
\frac{\|\phi_1\|_{p,h}^p}{\lambda_1^{p/2}},
\]
where $\phi_1$ is normalized by $\phi_1^\top W\phi_1=1$.
Although this quantity is not necessarily an upper bound for
$C_{e,h}(p)$, it provides a simple and inexpensive estimate that
is sufficient for verifying the numerical threshold condition
used in Section \ref{sec7}.


\begin{thebibliography}{23}

\bibitem{Akil1}
M. Akil, Y. Chitour, M. Ghader, A. Wehbe, Stability and exact controllability of a timoshenko system with only one fractional damping on the boundary, {\it Asymptot. Anal.}, \textbf{119} (2020), 221--280.

\bibitem{Akil2}
M. Akil, A. Wehbe, Stabilization of multidimensional wave equation with locally boundary fractional dissipation law under geometric conditions, {\it Math. Control Relat. Fields}, \textbf{9} (2019), 97--116.



\bibitem{Algwaiz}
M. Al-Gwaiz, V. Benci, F. Gazzola, Bending and stretching energies in a rectangular plate modeling suspension bridges, {\it Nonlinear Anal.}, \textbf{106} (2014), 181--734.

\bibitem{ammari25}
K. Ammari, V. Komornik, M. Sepúlveda-Cortés, O. Vera-Villagr\'an, Numerical Stabilization for a Mixture System with Kind Damping, {\it Appl. Math. Optim.}, \textbf{92} (2025), 57.

\bibitem{Benaissa2}
R. Aounallah, A. Benaissa, A. Zara\"{i}, Blow-up and asymptotic behavior for a wave equation with a time delay condition of fractional type, {\it Rend. Circ. Mat. Palermo}, \textbf{70} (2021), 1061--1081.

\bibitem{FA2}
M. F. Aslam, J. Hao, Nonlinear logarithmic wave equations: Blow-up phenomena and the influence of fractional damping, infinite memory and strong dissipation, {\it Evol. Equ. Control Theory}, \textbf{13} (2024), 1423--1435.

\bibitem{FA1}
M. F. Aslam, J. Hao, S. Boulaaras, L. Bashir, Blow-Up of Solutions in a Fractionally Damped Plate Equation with Infinite Memory and Logarithmic Nonlinearity, {\it Axioms}, \textbf{14} (2025), 80.


\bibitem{FA3}
M. F. Aslam, J. Hao, Z. Hajjej, L. Bashir, On the global existence, exponential decay and blow-up of a nonlinear wave equation subject to a boundary fractional damping and time-varying delay, {\it Discrete Contin. Dyn. Syst. Ser. S}, (2025).

\bibitem{aslam25}
M. F. Aslam, J. Hao, M. Sepúlveda, Z. Hajjej, Dynamic of Logarithmically and Fractionally Damped Wave Equations With Strong Damping and Infinite Memory: Theory and Numerics, {\it Math. Methods Appl. Sci.}, \textbf{48} (2025), 16617--16627.

\bibitem{Benaissa1}
A. Benaissa, H. Benkhedda, Global existence and energy decay of solutions to a wave equation with a dynamic boundary dissipation of fractional derivative type, {\it Z. Anal. Anwend.}, \textbf{37} (2018), 315--339.

\bibitem{Berchio}
E. Berchio, A. Ferrero, F. Gazzola, Structural instability of nonlinear plates modelling suspension bridges: mathematical answers to some long-standing questions, {\it Nonlinear Anal. Real World Appl.}, \textbf{28} (2016), 91--125.

\bibitem{BC}
E. Blanc, G. Chiavassa, B. Lombard, Biot-JKD model: Simulation of 1D transient poroelastic waves with fractional derivative, {\it J. Comput. Phys.}, \textbf{237} (2013), 1--20.

\bibitem{Vuk}
I. Bochicchio, C. Giorgi, E. Vuk, Asymptotic dynamics of nonlinear coupled suspension bridge equations, {\it J. Math. Anal. Appl.}, \textbf{402} (2013), 319--333.

\bibitem{Carpenter1999}
M. H. Carpenter, J. Nordstr\"om, D. Gottlieb, A Stable and Conservative Interface Treatment of Arbitrary Spatial Accuracy, {\it J. Comput. Phys.}, \textbf{148} (1999), 341--365.

\bibitem{CM}
J. U. Choi, R. C. Maccamy, Fractional order Volterra equations with applications to elasticity, {\it J. Math. Anal. Appl.}, \textbf{139} (1989), 448--464.

\bibitem{DA}
M. D'Abbicco, L. G. Longen, The interplay between fractional damping and nonlinear memory for the plate equation, {\it Math. Methods Appl. Sci.}, \textbf{45} (2022), 6951--6981.

\bibitem{delfour}
M. Delfour, M. Fortin, G. Payr, Finite difference solutions of a non-linear Schrödinger equation, {\it J. Comput. Phys.}, \textbf{44} (1981), 277--288.

\bibitem{Fernandez2014review}
D. C. Del Rey Fern\'andez, J. E. Hicken, D. W. Zingg, Review of Summation-By-Parts Operators with Simultaneous Approximation Terms for the Numerical Solution of Partial Differential Equations, {\it Comput. Struct.}, 2014.

\bibitem{Ferr1}
A. Ferrero, F. Gazzola, A partially hinged rectangular plate as a model for suspension bridges, {\it Discrete Contin. Dyn. Syst. A}, \textbf{35} (2015), 5879--5908.



\bibitem{Gazzola}
F. Gazzola, {\em Mathematical Models for Suspension Bridges: Nonlinear Structural Instability, Modeling, Simulation and Applications}, Springer-Verlag, New York, 2015.

\bibitem{Meck1}
J. Glover, A. C. Lazer, P. J. McKenna, Existence and stability of large scale nonlinear oscillation in suspension bridges, {\it Z. Angew. Math. Phys.}, \textbf{40} (1989), 172--200.

\bibitem{Haj3}
Z. Hajjej, General decay of solutions for a viscoelastic suspension bridge with nonlinear damping and a source term, {\it Z. Angew. Math. Phys.}, \textbf{72} (2021), 72--90.

\bibitem{ZH}
Z. Hajjej, A suspension bridges with a fractional time delay: asymptotic behavior and blow-up in finite time, {\it AIMS Math.}, \textbf{9} (2024), 22022--22040.

\bibitem{Haj2}
Z. Hajjej, M. M. Al-Gharabli, S. A. Messaoudi, Stability of a suspension bridge with a localized structural damping, {\it Discrete Contin. Dyn. Syst. Ser. S}, \textbf{15} (2022), 1165--1181.

\bibitem{Haj1}
Z. Hajjej, S. A. Messaoudi, Stability of a suspension bridge with structural damping, {\it Ann. Polon. Math.}, \textbf{125} (2020), 59--70.

\bibitem{KR}
M. Kirane, R. Aounallah, L. Jlali, General Decay and Blowing‐Up Solutions of a Nonlinear Wave Equation With Nonlocal in Time Damping and Infinite Memory, {\it Math. Methods Appl. Sci.}, 2025.

\bibitem{krenk}
S. Krenk, Energy conservation in Newmark based time integration algorithms, {\it Comput. Methods Appl. Mech. Engrg.}, \textbf{195} (2006), 6110--6124.



\bibitem{Liu}
W. Liu, H. Zhuang, Global existence, asymptotic behavior and blow-up of solutions for a suspension bridge equation with nonlinear damping and source terms, {\it Nonlinear Differ. Equ. Appl.}, \textbf{24} (2017).

\bibitem{Ma}
Q. Ma, C. Zhong, Existence of strong solutions and global attractors for the coupled suspension bridge equations, {\it J. Differential Equations}, \textbf{246} (2009), 1003--1014.

\bibitem{BM}
B. Mbodje, Wave energy decay under fractional derivative controls, {\it IMA J. Math. Control Inf.}, \textbf{23} (2006), 237--257.

\bibitem{MainBo}
M. Mainardi, E. Bonetti, The application of real-order derivatives in linear viscoelasticity, in H. Giesekus and M. F. Hibberd (eds.), {\it Progress and Trends in Rheology II}, Steinkopff, Heidelberg, (1988), 64--67.

\bibitem{Mattsson2004}
K. Mattsson, Summation by parts operators for finite difference approximations of second derivatives, {\it J. Comput. Phys.}, \textbf{199} (2004), 503--540.

\bibitem{Meck2}
P. J. McKenna, W. Walter, Nonlinear oscillations in a suspension bridge, {\it Arch. Ration. Mech. Anal.}, \textbf{98} (1987), 167--177.

\bibitem{Mess1}
S. A. Messaoudi, S. E. Mukiawa, A Suspension Bridge Problem: Existence and Stability, Springer Proc. Math. Stat., 190 (2017), 151--165.

\bibitem{Mess2}
S. A. Messaoudi, S. E. Mukiawa, Existence and decay of solutions to a viscoelastic plate equations, {\it Electron. J. Differential Equations}, \textbf{2016} (2016), 1--14.

\bibitem{MU}
S. E. Mukiawa, Decay result for a delay viscoelastic plate equation, {\it Bull. Braz. Math. Soc.}, \textbf{51} (2020), 333--356.

\bibitem{RV2}
P. X. Pamplona, J. E. M. Rivera, R. Quintanilla, On the decay of solutions for porous-elastic systems with history, {\it J. Math. Anal. Appl.}, \textbf{379} (2011), 682--705.

\bibitem{Parlett1998}
B. N. Parlett, {\em The Symmetric Eigenvalue Problem}, SIAM Classics in Applied Mathematics, 1998.

\bibitem{PZ}
A. Pazy, {\em Semigroups of Linear Operators and Applications to Partial Differential Equations}, Springer, New York, 1983.

\bibitem{Pod}
I. Podlubny, {\em Fractional Differential Equations: An Introduction to Fractional Derivatives, Fractional Differential Equations, to Methods of their Solution and some of their Applications}, Academic Press, London, 1999.

\bibitem{RV1}
J. E. M. Rivera, H. D. F. Sare, Stability of Timoshenko systems with past history, {\it J. Math. Anal. Appl.}, \textbf{339} (2008), 482--502.

\bibitem{RosSh}
Y. Rossikhin, M. Shitikova, Application of Fractional Calculus for Analysis of Nonlinear Damped Vibrations of Suspension Bridges, {\it J. Eng. Mech.}, \textbf{124} (1998), 1029--1036.

\bibitem{Tazhimbetov2023}
N. Tazhimbetov et al., Simulation of flexural-gravity wave propagation for elastic plates in shallow water using an energy-stable finite difference method with weakly enforced boundary and interface conditions, {\it J. Comput. Phys.}, \textbf{493} (2023), 112470.


\bibitem{TorBa}
P. J. Torvik, R. L. Bagley, On the appearance of the fractional derivative in the behavior of real materials, {\it J. Appl. Mech.}, \textbf{51} (1984), 294--298.


\bibitem{Wang}
Y. Wang, Finite time blow-up and global solutions for fourth-order damped wave equations, {\it J. Math. Anal. Appl.}, \textbf{418} (2014), 713--733.

\end{thebibliography}
\end{document}